



\documentclass[twocolumn]{autart}  
\usepackage{cite} 
\usepackage{color}
\usepackage{graphicx}         
\usepackage{amsmath}
\usepackage{subfigure}
\usepackage{amssymb}
\usepackage{amsfonts}
\usepackage{eucal}
\usepackage[titletoc,title]{appendix}
\usepackage{mathptmx} 
\usepackage{epsfig}
\usepackage{epstopdf}
\usepackage{subcaption} 

\usepackage{tikz} \usetikzlibrary{calc} \tikzset{>=latex}

\usepackage{bbm}
\usepackage{mathrsfs} 
\newtheorem{theorem}{Theorem}
\newtheorem{lemm}{Lemma}

\newenvironment{proof}[1][Proof]{\noindent\textbf{#1.} }{\ \rule{0.5em}{0.5em}}

\newtheorem{assump}{Assumption}
\usepackage{color} 
\allowdisplaybreaks
\usepackage{flushend}
\usepackage{verbatim} 
\begin{document}      
	\setlength\abovedisplayskip{6pt}
	\setlength\belowdisplayskip{6pt}
	\setlength\abovedisplayshortskip{6pt}
	\setlength\belowdisplayshortskip{6pt}
	\allowdisplaybreaks
	\setlength{\parindent}{1em}
	\setlength{\parskip}{-0em}   
	\addtolength{\oddsidemargin}{6pt}      
	
	\begin{frontmatter}
		\title{Neural Operators for Adaptive Control of Freeway Traffic} 
		
		\thanks[footnoteinfo]{Corresponding author: Huan Yu. This work is supported by the National Natural Science Foundation of China under Grant 62203131, 92471108 and 12131008.
		}	
		\author[Lv]{Kaijing Lv}\ead{kjlv@bit.edu.cn},  
		\author[Lv]{Junmin Wang}\ead{jmwang@bit.edu.cn},
		\author[Yu]{Yihuai Zhang} \ead{yzhang169@connect.hkust-gz.edu.cn},
		\author[Yu]{Huan Yu*}\ead{huanyu@ust.hk} \ead{}

		\address[Lv]{School of Mathematics and Statistics,  Beijing Institute of Technology, Beijing,China, 100081} 
	\address[Yu]{Hong Kong University of Science and Technology (Guangzhou), Nansha, Guangzhou, Guangdong, China, 511400}

	\begin{keyword}
		Traffic Flow model; 2$\times$2 Hyperbolic system; PDE backstepping; Neural operators; Adaptive control
	\end{keyword}

	\begin{abstract} 
The uncertainty in human driving behaviors leads to stop-and-go traffic congestion on freeway. The freeway traffic dynamics are governed by the Aw-Rascle-Zhang (ARZ) traffic Partial Differential Equation (PDE) models with unknown relaxation time. Motivated by the adaptive traffic control problem, this paper presents a neural operator (NO) based  adaptive boundary control design for the coupled 2$\times$2 hyperbolic systems with uncertain spatially varying in-domain coefficients and boundary parameter.  In traditional adaptive control for PDEs, solving backstepping kernel online can be computationally intensive, as it updates the estimation of coefficients at each time step. To address this challenge, we use operator learning, i.e. DeepONet, to learn the mapping from system parameters to the kernels functions. DeepONet, a class of deep neural networks designed for approximating operators, has shown strong potential for approximating PDE backstepping designs in recent studies.  Unlike previous works that focus on approximating single kernel equation associated with the scalar PDE system, we extend this framework to approximate PDE kernels for a class of the first-order coupled 2$\times$2 hyperbolic kernel equations. Our approach demonstrates that DeepONet is nearly two orders of magnitude faster than traditional PDE solvers for generating kernel functions, while maintaining a loss on the order of $10^{-3}$. In addition, we rigorously establish the system's stability via Lyapunov analysis when employing DeepONet-approximated kernels in the adaptive controller. The proposed adaptive control is compared with reinforcement learning (RL) methods. Our approach guarantees stability and does not rely on initial values, which is essential for rapidly changing traffic scenarios. This is the first time this operator learning framework has been applied to the adaptive control of the ARZ traffic model, significantly enhancing the real-time applicability of this design framework for mitigating traffic congestion.

	\end{abstract}
	
\end{frontmatter}

\section{Introduction}\label{sec:introduction}
Stop-and-go traffic congestion is a very common phenomenon in major cities around the world. The traffic congestion on highways leads to many unsafe driving behaviors, as well as increased fuel emissions, environmental pollution, and increased commuting time \cite{belletti2015prediction}\cite{de2011traffic}. The traffic congestion is characterized by the propagation of shock waves on road, caused by delayed driver response. There have been many studies on traffic stabilization using PDE models,  such as the first-order hyperbolic PDE model proposed by Ligthill and Whitham and Richards (LWR) \cite{lighthill1955kinematic}\cite{richards1956shock} to describe traffic density waves on highways. Then Aw and Rascle \cite{aw2000resurrection} and Zhang \cite{zhang2002non} proposed the second-order nonlinear hyperbolic PDE model to describe the evolution of velocity and density states in traffic flow. The ARZ model is a 2$\times$2 hyperbolic PDE system and widely used for describing dynamics of the stop-and-go traffic oscillations. In this paper, we adopt the ARZ model and develop adaptive boundary control designs for traffic stabilization. 
\subsection{PDE backstepping for traffic control}
\par The control strategy for freeway traffic congestion is usually based on static road infrastructure to regulate traffic flow, such as ramp metering and varying speed limits. Various traffic boundary control designs have been proposed to smooth traffic in the works of Bekiaris-Liberis and Delis \cite{bekiaris2019feedback}, Zhang \cite{zhang2019pi} as well as Karafyllis, Bekiaris-Liberis, and Papageorgiou \cite{karafyllis2018feedback}. While Bekiaris-Liberis and Delis utilize Adaptive Cruise Control vehicles for in-domain actuation as control inputs \cite{bekiaris2019feedback}, Karafyllis et al. design a boundary feedback law to manage inlet demand \cite{karafyllis2018feedback}. The boundary control strategy using PDE backstepping is first proposed in \cite{yu2019traffic} to stabilize the linearized ARZ system, including full state feedback and output feedback. Recent efforts \cite{burkhardt2021stop,yu2018adaptive,yu2022traffic,yu2018varying,yu2022simultaneous,zhang2023mean} have further developed backstepping controllers for various traffic scenarios including multi-lane, multi-class and mixed-autonomy traffic.
This paper primarily focuses on adaptive control of traffic PDE systems with uncertain parameters.  

\par In traffic flow modeling, relaxation time is a critical parameter representing drivers' reaction delays to evolving traffic conditions. However,  heterogeneity and unpredictability of individual driver behavior makes it impossible to obtain the relaxation time in practice. 
This uncertainty in relaxation time can significantly impact the stability and performance of traffic systems. Traditional control methods struggle to handle such uncertainties, making it difficult to ensure system stability and optimal performance under varying traffic conditions. To address these challenges, we adopt adaptive control strategies that allow for real-time adjustment of the controller gains to accommodate unknown or time-varying system characteristics, ensuring the desired system performance. 

\textcolor{black}{
Early developments in adaptive control for PDEs, as in \cite{logemann1997adaptive}, addressed systems stabilized via high-gain feedback, under a relative degree one condition. While these approaches ensured parameter  identifiability, they required control input to be applied throughout the spatial domain. Considerable progress has been achieved in the adaptive stabilization of PDEs with uncertain parameters, especially for hyperbolic and parabolic systems\cite{bohm1998model,hong1994direct,krstic2008adaptive,smyshlyaev2007adaptive}}. Adaptive control methods \cite{di2014adaptive,belhadjoudja2023adaptive,kawan2022lyapunov} can be categorized into Lyapunov-based design, identifier-based design and swapping-based design. After a decade of research, advancements in adaptive control have begun to be applied to coupled hyperbolic PDEs \cite{anfinsen2019adaptive}.

\par Although adaptive control for PDE systems with unknown parameters has been extensively studied~\cite{krstic2008backstepping,smyshlyaev2010boundary,anfinsen2018adaptive,hu2015control,wang2020event,auriol2020output} and was first applied for the ARZ PDE model in~\cite{yu2018adaptive}. The practical implementation of the adaptive controller for the traffic systems still faces challenge. This is because the adaptive control process simultaneously requires the estimation of unknown system parameters and PDE states. After each time step, it is necessary to recalculate the solution to the PDE corresponding to the gain kernel function in order to update the estimated system parameter functions. This places extremely high demands on real-time computation. The computational resources required for calculation of the gain function increase significantly with spatial sampling precision when applying traditional finite difference and finite element methods. In this paper, we adopt neural operators to accelerate computation of adaptive PDE backstepping controllers. 

\subsection{Advances in machine learning for PDE traffic control}

With rapid advances in machine learning, data-driven methods for solving,  modeling and control of PDEs have received widespread attention including physics-informed learning, reinforcement learning and operator learning. Physics-Informed Neural Networks (PINNs) directly incorporates physical constraints into neural networks training by embedding the physical laws of PDEs into the loss function. This enables PINNs to solve PDEs without large amounts of training data. Mowlavi and Nabi extend PINNs method to PDE optimal control problems in \cite{mowlavi2023optimal}. Zhao proposed a novel hybrid Traffic state estimation (TSE) approach called Observer-Informed Deep Learning (OIDL), which integrates a PDE observer and deep learning paradigm to estimate spatial-temporal traffic states from boundary sensing data in \cite{zhao2023observer}. However, PINNs need to be retrained for each new set of boundary and initial conditions, which poses limitations in adaptive control applications.

Reinforcement learning (RL) has also been increasingly applied for PDE control problems, particularly in boundary and feedback control. RL continuously optimizes strategies to achieve real-time control of complex PDE systems. In the field of traffic management, researchers have been applying RL to various traffic issues. Wu et al. used the city mobility traffic micro-simulator SUMO to design a deep RL framework for hybrid autonomous traffic in various experimental scenarios \cite{wu2017emergent}. Under the same framework, \cite{qu2020jointly} proposed a reinforcement learning-based car-following model for electric, connected, and automated vehicles to reduce traffic oscillations and improve energy efficiency. \cite{yu2021reinforcement} presented the exploration using RL for traffic PDE boundary control.
However, RL has limited generalization ability in practical applications. RL
may perform well under the specific initial conditions. However, for initial conditions outside the training range, there may be performance degradation or even failure. RL may be sensitive to hyperparameters and exhibit unpredictable behavior, making it difficult to ensure consistent and stable performance in different scenarios.

\par Traditional neural networks typically learn mappings between finite dimensional Euclidean spaces, but with the advancement of research, this method has been extended to the field of NO~\cite{lu2021learning}. NO-based learning methods focus on mapping between function spaces and are specifically designed for solving PDEs and dynamical systems. Compared with traditional machine learning methods, NO have two unique advantages. Firstly, theoretically speaking, NO can learn the mapping of the entire system parameter set, rather than being limited to a single system parameter like standard neural networks. Secondly, from an empirical perspective, research work~\cite{lu2021learning}\cite{shi2022machine} has shown that NO have significantly better accuracy than traditional deep learning methods when simulating complex functions. Therefore, NO not only solves individual equation instances, but can also handle the problems of the entire PDE family.

Recent research has effectively utilized DeepONet for one-dimensional transport PDEs\cite{bhan2023neural}, reaction-diffusion equations and observer designs\cite{krstic2024neural}, as well as for  hyperbolic PDEs  with delay \cite{qi2024neural}, parabolic PDEs with delays \cite{wang2025deep}, 2$\times$2 hyperbolic PDEs \cite{wang2025backstepping}, traffic flow \cite{zhang2024neural} and cascaded parabolic PDEs \cite{lv2024neural}. 
In contrast to the approximate backstepping transformations used in \cite{bhan2023neural,krstic2024neural,qi2024neural,wang2025deep,wang2025backstepping,zhang2024neural,lv2024neural}, this paper adopts the exact backstepping transformation, referred to as a gain-only approach. The gain-only approach focuses on approximating a 1D gain kernel, simplifying network design, reducing training set size and time, and easing the derivation of the perturbed target system, which have been successfully used in gain scheduling that adjust controller gains based on current states of nonlinear PDE system\cite{lamarque2024gain} and several benchmark unstable PDEs\cite{vazquez2024gain}. A recently developed method based on power series approximations \cite{vazquez2023power}, along with its MATLAB extension \cite{lin2024towards}, shows promise as a tool for generating training datasets. The application of NO-approximated gain kernels becomes even more valuable for adaptive control, where the kernel must be recomputed online at each time step to accommodate updated estimates of the plant parameters.  This was first explored for first-order hyperbolic PDE in \cite{lamarque2025adaptive} and extended to the reaction-diffusion equation in \cite{bhan2025adaptive}. Different from \cite{lamarque2025adaptive}\cite{bhan2025adaptive}, where the kernel equation involves a single kernel, in this work, we extend the results of \cite{lamarque2025adaptive} to the ARZ traffic models which involved the coupled heterogeneous hyperbolic PDEs. The technical challenges arise from both the more complex kernel computations and the proof analysis of the higher-order PDE systems with the approximated controllers. 

\textbf{\emph{Contributions:}}
The main contributions are summarized as follows:
\begin{itemize}	
	\item[$\bullet$]We present an NO-based adaptive control method to stabilize the ARZ traffic PDE model with unknown relaxation time. Additionally, we extend stability schemes for more general 2$\times$2 hyperbolic systems with uncertain spatially varying in-domain coefficients and boundary parameter. Compared to the relevant works \cite{lamarque2025adaptive}\cite{bhan2025adaptive}, which approximate single kernel, a key technical challenge is dealing with the approximation of coupled 2$\times$2 Goursat-form PDE kernels in the stabilization of coupled 2$\times$2 hyperbolic PDEs.
	\item[$\bullet$]To address the computational challenges associated with solving gain kernel equations, we integrate DeepONet into the adaptive control framework.  
	It is shown that the NO is almost two orders of magnitude faster than the PDE solver in solving kernel functions, and the loss remains on the order of $10^{-3}$. 
	To the best of our knowledge, this is the first study to integrate DeepONet with adaptive control in traffic flow systems, demonstrating its potential to improve the computational efficiency of control schemes in congested traffic scenarios.
	\item[$\bullet$] Through comparative experiments with RL, it has been proven that our method does not rely on initial values compared to RL and provides a model-based solution with guaranteed stability. In addition, we theoretically prove the system's stability through Lyapunov analysis when replacing with the DeepONet approximation kernels in the adaptive controller.
\end{itemize}
\textbf{\textit{Organization of paper:}}The paper is organized as follows. 
Section \ref{sec_Nominal} introduces ARZ traffic PDE model and a nominal adaptive backstepping control scheme designed for  2$\times$2 hyperbolic PDEs. Section \ref{sec_Approximation} gives a series of properties for the gain kernel and its time derivative and introduces the approximation of feedback kernel operators.
Section \ref{sec_DeepONet} presents the stabilization achieved through the application of approximate controller gain functions via DeepONet. Numerical simulations are presented in Section \ref{sec_Simulation}. Section \ref{sec_Conclusion} presents the conclusion.

\vspace{-10pt}
\paragraph*{Notation.}
\begin{table}[]
	\centering
	\begin{tabular}{|l|c|}
		\hline
		exact operator & $\mathcal K$ 
		\\ \hline 
		neural operator & $\hat{\mathcal K}$
		\\ \hline
		unknown model parameters & $({c}_1,{c}_2,{c}_3,{c}_4,{r})$
		\\ \hline 
		estimated model parameters & $(\hat{c}_1,\hat{c}_2,\hat{c}_3,\hat{c}_4,\hat{r})$
		\\ \hline 
		exact kernel & $(K^u,K^m)=\mathcal K ({c}_1,{c}_2,{c}_3,{c}_4,{r})$
		\\ \hline
		exact estimated kernel & $(\breve{K}^u,\breve{K}^m) = \mathcal K(\hat{c}_1,\hat{c}_2,\hat{c}_3,\hat{c}_4,\hat{r})$
		\\ \hline
		approximate estimated kernel  & 
		$(\hat{K}^u,\hat{K}^m) = \hat{\mathcal K}(\hat{c}_1,\hat{c}_2,\hat{c}_3,\hat{c}_4,\hat{r})$ 
		\\ \hline
	\end{tabular}
	\caption{Nomenclature for kernel learning with exact and approximate operators}
	\label{tab:nomenclature}
\end{table}

We present the nomenclature for kernel learning with exact and approximate operators in Table 1. We define the $L^2$-norm for $\chi(x)\in L^2[0,1]$ as $\rVert\chi \rVert_2=\int_0^1|\chi(x)|^2dx$. \textcolor{black}{We use $\|\cdot \|_\infty$ for the infinity-norm, that is $\|\chi\|_\infty=\sup_{x \in [0,1]} |\chi(x)|$.} We set $\rVert\chi \rVert_1=\int_0^1 |\chi(x)|\,dx$. 

\section{{Nominal Adaptive Control Design}}\label{sec_Nominal}
\subsection{ARZ PDE Traffic Model}
The ARZ PDE model is used to describe the formation and dynamics of the traffic oscillations which refer to variations of traffic density and speed around equlibrium values. It consists of a set of 2$\times$2 hyperbolic PDEs for traffic density and velocity. The ARZ model of $(\rho(x,t),v(x,t))$-system is given by
\begin{equation}\label{eq:ARZ_1}
\begin{aligned}
	\partial_{t} \rho+\partial_{x}(\rho v) & =0,  \\
	\partial_{t} (v-V(\rho))+v\partial_{x}\left(v-V(\rho)\right)  & =\frac{V(\rho)-v}{\tau} ,\\
	\rho(0, t) & =\frac{q^{*}}{v(0, t)}, \\
	v(L, t) & =U(t)+v^{*}, 
\end{aligned}
\end{equation}
where $(x,t)\in [0,L]\times \mathbb{R}_+$, 
$\rho(x, t)$ represents the traffic density, $v(x, t)$ represents the traffic speed, and $\tau$ denotes the relaxation time, which refers to the time required for driver behavior to adapt to equilibrium. This parameter is used to describe the process by which vehicle speed adjusts to match the traffic density. \textcolor{black}{The variable $p(\rho)$, defined as the traffic system pressure, is related to the density by the equation}
\begin{equation*}
	 \textcolor{black}{p(\rho)=c_0(\rho)^{\gamma},}
\end{equation*}
 \textcolor{black}{and $c_0,\gamma \in \mathbb{R}_+$.} The equilibrium velocity-density relationship $V (\rho)$ is given in Greenshield model:
\begin{equation*}
	V(\rho)=v_{f}\left(1-\left(\frac{\rho}{\rho_{m}}\right)^\gamma\right),
\end{equation*}
where $v_f$ is the velocity of free flow, $\rho_m$ is the maximum density of free flow, and $(\rho^*,v^*)$ are the equilibrium points of the system with $v^*=V(\rho^*)$. \textcolor{black}{We consider a constant traffic flux $q^* = \rho^* v^*$ entering the domain from $x = 0$ and there is a Varying Speed Limit(VSL) boundary control at the outlet. $U(t)$ is defined as variation from steady state velocity. The VSL at outlet shows $v^*$ with $U(t)$ which we will design later.} We can apply the change of coordinates introduced in \cite{yu2018varying} to rewrite it in the Riemann coordinates and then map it to a decoupled first-order 2$\times$2 hyperbolic system.
\begin{equation}
\label{eq:linearized_ARZ}
    \begin{aligned} 
	\partial_{t} {u}_1+v^{*}\partial_{x} {u}_1 & =0, \\
	\partial_{t} {m}_1-( \gamma p^*-v^{*}) \partial_{x}{m}_1 & ={c}(x) {u}_1, \\
	{u}_1(0, t) & =r_0 {m}_1(0, t), \\
	{m}_1(L, t) & =U(t), 
\end{aligned}
\end{equation}
where 
\begin{align*}
	{c}(x)&=-\dfrac{1}{\tau}\exp(-\dfrac{x}{\tau v^*}),r_0=\frac{\rho^* V(\rho^* )+v^*}{v^*}.
\end{align*}

The relaxation time $\tau$ describes how fast drivers adapt their speed to equilibrium speed-density relations. Its value is usually difficult to measure in practice and is easily affected by various external factors. Therefore, we propose adaptive control law. Motivated by the second-order ARZ model, we first propose NO-based adaptive design for a more general framework of 2$\times$2 hyperbolic PDEs with spatially varying coefficients, as the linearized ARZ model \eqref{eq:linearized_ARZ} is a special case of such systems. 
\subsection{{Adaptive Control for Coupled 2$\times$2 Hyperbolic PDEs}}
\par We consider the first-order coupled 2$\times$2 hyperbolic PDE system with four spatially variable coefficients, 
\begin{equation}
\label{eq:uv_def}
    \begin{aligned}
	\partial_{t} u(x, t)  +\lambda \partial_{x} u(x, t)& =c_{1}(x) u(x, t)+c_{2}(x) m(x, t), 
    \\
	\partial_{t} m(x, t)-\mu \partial_{x} m(x, t) & =c_{3}(x) u(x, t)+c_{4}(x) m(x, t), 
    \\
	u(0, t) & =r m(0, t), 
    \\
	m(1, t) & =U(t), 
\end{aligned}
\end{equation}
where $t \in \mathbb{R}_{+}$ is the time, $x \in [0, 1]$ is the space, the states
are given by  $u,m$ and the initial conditions are $u(x,0)=u_0(x)$, $m(x,0)=m_0(x)$ where $u_0,m_0 \in L^2([0,1])$. The positive transport speeds $\lambda, \mu \in \mathbb{R}$ are known. We assume the spatially variable coefficients $c_{1}(x),c_{2}(x),c_{3}(x),c_{4}(x) \in C([0,1])$ and boundary coefficient $r \in \mathbb{R}$ are unknown.  
	\par Note that system \eqref{eq:uv_def} is 2$\times$2 hyperbolic system with spatially variable coefficients in domain, which is different from the system in \cite{anfinsen2018adaptive} with the constant coefficients. System \eqref{eq:uv_def} is a direct extension of the system in \cite{anfinsen2018adaptive}, where the difference lies in the designed adaptive update law.

\par To ensure the well-posedness of the kernel PDEs, the adaptive control estimation requires bounded assumptions. Our basic assumption is as follows.
\begin{assump}
	 Bounds are known on all uncertain parameters, that is, there exists some constants $\bar{c}_i$, $i=1\cdots 4$, and $\bar{r}$ so that 
	\begin{equation}
		\|c_i\|_{\infty} \leq \bar{c}_i,  i=1\cdots 4, |r|\leq \bar{r}.\label{eq:bounds}
	\end{equation}
\end{assump}

\par We first propose an adaptive control design using passive identifier design method, which includes the exact estimated backstepping kernels $\breve{K}^u,\breve{K}^m$.

\par We consider the identifier 
\begin{equation}
\label{eq:uv_identifier_u}
    \begin{aligned}
	\partial_{t} \hat{u}(x, t) =&-\lambda \partial_{x}\hat{u}(x, t)+\hat{c}_{1}(x,t)u(x,t)   \\
	&+\hat{c}_{2}(x,t)m(x,t)+\rho e_1(x,t)\|\varpi(t)\|^{2},  
    \\
	\partial_{t} \hat{m}(x, t) =&\mu \partial_{x}\hat{m}(x, t) +\hat{c}_{3}(x,t)u(x,t)   \\
	&+\hat{c}_{4}(x,t)m(x,t)+ \rho e_2(x, t)\|\varpi(t)\|^{2}, 
    \\
	\hat{u}(0, t)  =& \frac{\hat{r} u(0, t)+u(0, t) m^{2}(0, t)}{1+m^{2}(0, t)}, 
    \\
	\hat{m}(1, t)  =& U(t),
\end{aligned}
\end{equation}
where $\rho>0$,
\begin{equation}
	e_1(x,t) = u(x,t) - \hat{u}(x,t), e_2(x,t) = m(x,t) - \hat{m}(x,t),
	\label{eq:e_identifier_def}
\end{equation}
are errors between $u$ and $m$ and their estimates $\hat{u}$ and $\hat{m}$
, $\hat{c}_i$ and $\hat{r}$ are estimates $c_i$ and $r$. We define 
\begin{equation*}
	\varpi(x, t)=[u(x, t),m(x, t)]^{T},
\end{equation*}
for some initial conditions
$$\hat{u}_{0}, \hat{m}_{0} \in L^2([0,1]).$$
The error signals \eqref{eq:e_identifier_def} can straightforwardly be shown to have dynamics
\begin{align}
	\partial_{t} e_1(x, t) =&-\lambda \partial_{x}e_1(x, t)+\tilde{c}_{1}(x,t)u(x,t)  \nonumber \\
	&+\tilde{c}_{2}(x,t)m(x,t)-\rho e_1(x, t)\|\varpi(t)\|^{2},  \label{eq:e_1}\\
	\partial_{t} e_2(x, t) =&\mu \partial_{x}e_2(x, t) +\tilde{c}_{3}(x,t)u(x,t)  \nonumber \\
	&+\tilde{c}_{4}(x,t)m(x,t)- \rho e_2(x, t)\|\varpi(t)\|^{2}, \\
	e_1(0, t)  =&\frac{\tilde{r}(t) m(0, t)}{1+m^{2}(0, t)}, \label{eq:e_3}\\
	e_2(1, t) =&0, \label{eq:e_4}
\end{align}
where
\begin{equation*}
    \tilde{r}=r - \hat{r},\quad \tilde{c}_i=c_i - \hat{c}_i, \quad i =1,\cdots, 4.
\end{equation*}
We choose the following update laws
\begin{equation}
\label{eq:lawc12}
    \begin{aligned}
	\hat{c}_{1t}(x,t) & =\operatorname{Proj}_{\bar{c}_{1}}\left\{\gamma_{1}  e^{-\gamma x}e_1(x, t)u(x, t) , \hat{c}_{1}(x,t)\right\}, 
    \\
	\hat{c}_{2t}(x,t) & =\operatorname{Proj}_{\bar{c}_{2}}\left\{\gamma_{2}  e^{-\gamma x}e_1(x, t)m(x, t), \hat{c}_{2}(x,t)\right\}, 	
    \\
	\hat{c}_{3t}(x,t) & =\operatorname{Proj}_{\bar{c}_{3}}\left\{\gamma_{3}  e^{\gamma x}e_2(x, t)u(x, t) , \hat{c}_{3}(x,t)\right\}, 
    \\
	\hat{c}_{4t}(x,t) & =\operatorname{Proj}_{\bar{c}_{4}}\left\{\gamma_{4}  e^{\gamma x}e_2(x, t)m(x, t), \hat{c}_{4}(x,t)\right\}, 	
    \\
	\dot{\hat{r}}(t) & =\operatorname{Proj}_{\bar{r}}\left\{\gamma_{5}e_1(0, t) m(0, t), \hat{r}(t)\right\},
\end{aligned}
\end{equation}
where $ \gamma,\gamma_1, \gamma_2,\gamma_{3},\gamma_{4},\gamma_{5}>0$ are scalar design gains. \textcolor{black}{$\operatorname{Proj}$ denotes the projection operator}
\begin{align*} 
	\textcolor{black}{\operatorname{Proj}_{\bar{\omega}}\{\tau,\hat{\omega}\}=\left\{
	\begin{array}{rcl}
		0 ~~~~    &    & {|\hat{\omega}|\geq \bar{\omega} ~\mbox{and}~\hat{\omega} \tau \geq 0},\\
		\tau ~~~~     &    & \mbox{otherwise}.
	\end{array} \right.}
\end{align*}
\textcolor{black}{The projection operator \(\operatorname{Proj}_{\bar{\omega}}\{\tau,\hat{\omega}\}\) is designed to constrain the update of parameter \(\hat{\omega}\), ensuring that it does not exceed the predefined bound \(\bar{\omega}\).}
The adaptive laws \eqref{eq:lawc12} have the following properties for all $t>0$ \cite{krstic2009delay}
\begin{align}\label{eq:projection_proper}
	-\tilde{\omega}^T\operatorname{Proj}_{\bar{\omega}}\{\tau,\hat{\omega}\} \leq -\tilde{\omega}^T\tau.
\end{align}
\begin{lemm}\label{lemma:Identifier_bound}
	{\em [Properties of passive identifier ]}
	Consider the system \eqref{eq:uv_def} and the identifier \eqref{eq:uv_identifier_u}, with an arbitrary initial condition $\hat u_0 = \hat u(\cdot,0), \hat m_0 = \hat m(\cdot,0)$ such that $\|\hat u_0\|<\infty,\|\hat m_0\|<\infty$, along with the update law \eqref{eq:lawc12} with an 
	arbitrary Lipschitz initial conditions satisfying the bounds \eqref{eq:bounds},
	guarantees the following properties
	\begin{align}
		\|\hat{c}_{i}(\cdot, t)\|_{\infty} \leq \bar{c}_i,\quad \forall i=1,\cdots,4,\quad |\hat{r}| \leq& \bar{r} , \label{properties_1} 
        \\
		\|e_1\| , \|e_2\| \in& L^{\infty} \cap L^2 ,
        \\
		\|e_1\|\|\varpi\|,\|e_2\|\|\varpi\| \in& L^2 , \label{properties_3}
        \\
		|e_1(0, \cdot)|, |e_1(1, \cdot)|, |e_2(0, \cdot)|,|e_1(0,\cdot) u(0, \cdot)| \in& L^2, \label{properties_4}
        \\
		\|{\partial_t \hat{c}_{i}}\|,|\dot{\hat{r}} |\in& L^2,
        \\
		\dfrac{\tilde{r}m(0,\cdot)}{\sqrt{1+m^2(0,\cdot)}} \in& L^2.\label{properties_5}
	\end{align}
\end{lemm}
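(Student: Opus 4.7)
The plan is to prove every claim in a single weighted-Lyapunov pass, then read off the $L^\infty$ and $L^2$ statements. Properties \eqref{properties_1} require no Lyapunov calculation: by the definition of $\operatorname{Proj}$, any Lipschitz initialization within the bounds remains there for all $t$, since the update is frozen exactly when the estimate saturates and the unprojected velocity would push outward.

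The core Lyapunov functional is
$$V(t)=\tfrac12\!\int_0^1\!e^{-\gamma x}e_1^2\,dx+\tfrac12\!\int_0^1\!e^{\gamma x}e_2^2\,dx+\sum_{i=1}^{4}\tfrac{1}{2\gamma_i}\!\int_0^1\!\tilde c_i^{\,2}\,dx+\tfrac{1}{2q\gamma_5}\tilde r^{\,2},$$
with a free weight $q>0$ to be fixed. The exponential factors are aligned with the two transport directions, and each $\gamma_i$ is chosen so that the update law \eqref{eq:lawc12} cancels one cross term in $\dot V$. Differentiating along \eqref{eq:e_1}--\eqref{eq:e_4} and integrating by parts on the transport terms yields the volumetric dissipation $-\tfrac{\lambda\gamma}{2}\!\int\!e^{-\gamma x}e_1^2\,dx-\tfrac{\mu\gamma}{2}\!\int\!e^{\gamma x}e_2^2\,dx$, boundary contributions $-\tfrac{\lambda e^{-\gamma}}{2}e_1^2(1,t)+\tfrac{\lambda}{2}e_1^2(0,t)-\tfrac{\mu}{2}e_2^2(0,t)$ (the outgoing piece at $x=1$ for $e_2$ vanishes by \eqref{eq:e_4}), the damping injection $-\rho\!\int\!(e^{-\gamma x}e_1^2+e^{\gamma x}e_2^2)\|\varpi\|^2\,dx$, and the cross terms $\int\!e^{-\gamma x}e_1(\tilde c_1u+\tilde c_2m)\,dx+\int\!e^{\gamma x}e_2(\tilde c_3u+\tilde c_4m)\,dx$. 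Applying \eqref{eq:projection_proper} to the four spatial update laws with the $\tau_i$ read off from \eqref{eq:lawc12} cancels these four cross terms exactly, and applying it to the $\hat r$ law produces the residual $-\tfrac{1}{q}\tilde r^{\,2}m^2(0,t)/(1+m^2(0,t))$.

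The delicate closure is the remaining $+\tfrac{\lambda}{2}e_1^2(0,t)$. Substituting \eqref{eq:e_3} and using $(1+m^2)^{-2}\le(1+m^2)^{-1}$ bounds it by $\tfrac{\lambda}{2}\tilde r^{\,2}m^2(0,t)/(1+m^2(0,t))$, and choosing $q<2/\lambda$ makes the combined coefficient $(\tfrac{\lambda}{2}-\tfrac{1}{q})$ strictly negative. All contributions to $\dot V$ are then non-positive, and in particular we retain a strictly negative term $-(\tfrac{1}{q}-\tfrac{\lambda}{2})\tilde r^{\,2}m^2(0,t)/(1+m^2(0,t))$. Boundedness of $V$ yields $\|e_1\|,\|e_2\|\in L^\infty$, and integrating $\dot V$ on $[0,\infty)$ against $V\ge 0$ extracts $\|e_j\|\in L^2$, $\|e_j\|\|\varpi\|\in L^2$, $|e_1(1,\cdot)|,|e_2(0,\cdot)|\in L^2$, and $\tilde r m(0,\cdot)/\sqrt{1+m^2(0,\cdot)}\in L^2$. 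Then $|e_1(0,t)|\le|\tilde r m(0,t)|/\sqrt{1+m^2(0,t)}$ recovers $|e_1(0,\cdot)|\in L^2$, and using $u(0,t)=r m(0,t)$, $|r|\le\bar r$, and $m^2/(1+m^2)\le 1$ gives $|e_1(0,t)u(0,t)|^2\le\bar r^{\,2}\tilde r^{\,2}m^2(0,t)/(1+m^2(0,t))\in L^1$. The projection also implies $|\partial_t\hat c_i|\le\gamma_i|e_j\omega_k|$ pointwise and $|\dot{\hat r}|\le\gamma_5|e_1(0,t)m(0,t)|$, which yield the remaining $L^2$ properties: the scalar bound is in $L^2$ by \eqref{properties_5}, and the spatial ones follow from $\|e_j\|\|\varpi\|\in L^2$ combined with the uniform-in-time spatial $L^\infty$ bound on $\varpi$ inherited from the well-posedness of \eqref{eq:uv_def}.

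I expect the principal obstacle to be the $x=0$ closure just outlined: matching the outgoing $e_1^2(0,t)$ term, which carries a $(1+m^2)^{-2}$ denominator, against the projection-induced $\tilde r$-residual, which carries $(1+m^2)^{-1}$, is what forces both the saturating design of the $\hat u(0,t)$ boundary built into \eqref{eq:uv_identifier_u} and the freedom to tune the weight $q\gamma_5$ on $\tilde r^{\,2}$. A secondary technical point is lifting $\|e_j\|\|\varpi\|\in L^2$ to the pointwise-in-space estimate $\int\!\!\int\!e_j^{\,2}\omega_k^{\,2}\,dx\,dt<\infty$ needed to conclude $\|\partial_t\hat c_i\|\in L^2$, which is why an $L^\infty_x$ plant-regularity estimate is implicitly invoked.
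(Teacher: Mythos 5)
Your argument is correct and follows the same high-level structure as the paper — a weighted Lyapunov functional on the identifier errors, projection to cancel the $\tilde c_i$ cross terms, and exploitation of the saturating boundary condition in \eqref{eq:uv_identifier_u} at $x=0$ — but the handling of the $x=0$ boundary contribution is genuinely different. You introduce a tunable weight $q$ on the $\tilde r^2$ term and close the sign argument via the elementary inequality $(1+m^2)^{-2}\le(1+m^2)^{-1}$, landing on $\bigl(\tfrac{\lambda}{2}-\tfrac{1}{q}\bigr)\tfrac{\tilde r^2 m^2(0,t)}{1+m^2(0,t)}\le 0$ for $q<2/\lambda$. The paper instead fixes the weight at $\lambda/(2\gamma_5)$, which produces an \emph{exact} algebraic cancellation
\begin{equation*}
\lambda e_1^2(0,t)-\lambda\tilde r(t)e_1(0,t)m(0,t)=-\lambda e_1^2(0,t)m^2(0,t),
\end{equation*}
via the identity $e_1(0,t)-\tilde r m(0,t)=-e_1(0,t)m^2(0,t)$ read off from \eqref{eq:e_3}; it then introduces a \emph{second} auxiliary Lyapunov function $V_3=\tfrac{1}{2\gamma_5}\tilde r^2$ to obtain \eqref{properties_5} separately. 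Your single-pass route delivers \eqref{properties_5} directly and is arguably tidier; the paper's route delivers the strictly stronger term $-\lambda e_1^2(0,t)m^2(0,t)$ at the first pass, which makes the $L^2$ membership of $|e_1(0,\cdot)m(0,\cdot)|$ immediate rather than a consequence of \eqref{properties_5}. Both of your downstream deductions — $|e_1(0,t)|\le|\tilde r m(0,t)|/\sqrt{1+m^2(0,t)}$, and $|e_1(0,t)u(0,t)|^2\le\bar r^2\tilde r^2 m^2(0,t)/(1+m^2(0,t))$ — check out. On the $\|\partial_t\hat c_i\|\in L^2$ item, the caveat you flag (that passing from $\|e_j\|\|\varpi\|\in L^2$ to $\iint e_j^2\,\omega_k^2\,dx\,dt<\infty$ requires a spatial $L^\infty$ bound on $\varpi$) is real if $\|\cdot\|$ denotes the spatial $L^2$ norm; the paper also does not address this explicitly. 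The usual fix, consistent with \cite{anfinsen2019adaptive}, is to read $\|\partial_t\hat c_i\|$ as the spatial $L^1$ norm, in which case Cauchy--Schwarz gives $\|\partial_t\hat c_i\|_1\le\gamma_i\|e_j\|\,\|\varpi\|$ and membership in $L^2(0,\infty)$ follows directly from \eqref{properties_3} without any extra regularity assumption — a cleaner closure than the one you invoke.
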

\begin{proof}
	The proof can be found in the Appendix A.
\end{proof}
\par Considering the plant \eqref{eq:uv_def} with  unknown parameters $c_i, i=1,\cdots,4$ and $r$, we will design a nominal adaptive control law to achieve global stability. 

\par We consider the following adaptive backstepping transformation
\begin{equation}
\label{eq:backstepping_1}
    \begin{aligned}
	w(x,t)=&\hat{u}(x,t),
    \\
	z(x,t)=&\hat{m}(x,t)-\int_{0}^{x}\breve{K}^{u}(x,\xi,t)\hat{u}(\xi,t)d\xi 
    \\
	&- \int_{0}^{x}\breve{K}^{m}(x,\xi,t)\hat{m}(\xi,t)d\xi = T[\hat{u},\hat{m}](x,t), 
\end{aligned}
\end{equation}
where the kernels $\breve{K}^{u}$ and $\breve{K}^{m}$ satisfy the following kernel functions
\begin{equation}
\label{eq:kernel_1}
    \begin{aligned}
	\mu \breve{K}_{x}^{u}(x, \xi, t)= &\lambda \breve{K}_{\xi}^{u}(x, \xi, t)+\hat{c}_{3}(\xi,t)\breve{K}^{m}(x, \xi, t)
    \\
	&+(\hat{c}_{1}(\xi,t)-\hat{c}_{4}(\xi,t)) \breve{K}^{u}(x, \xi, t), 
    \\
	\mu\breve{K}_{x}^{m}(x, \xi, t)= &- \mu \breve{K}_{\xi}^{m}(x, \xi, t)+ \hat{c}_{2}(\xi,t) \breve{K}^{u}(x, \xi, t), 
    \\
	\breve{K}^{u}(x, x, t)= & -\frac{\hat{c}_{3}(x,t)}{\lambda+\mu}, 
    \\
	\breve{K}^{m}(x, 0, t)= & \dfrac{\lambda \hat{r}(t)}{\mu}\breve{K}^{u}(x, 0, t).
\end{aligned}
\end{equation}
The coupled \(2\times2\) Goursat-form PDEs, governed by two gain kernels, are defined over the triangular domain \(\mathcal{T}_1\), given by:
\begin{align}
	\mathcal{T}_1 = \{ (x, \xi) \; | \; 0 \leq \xi \leq x \leq 1 \}.
\end{align}
\textcolor{black}{The kernel functions \(\breve{K}^{u}(x, \xi, t)\) and \(\breve{K}^{m}(x, \xi, t)\) are computed online by solving the equation of \(2 \times 2\) hyperbolic PDEs whose dynamics depend on the unknown parameters \(\hat{c}_i(x,t)\), \(i = 1, \dots, 4\), and \(\hat{r}(t)\).} \textcolor{black}{These parameters are continuously estimated and updated via the adaptive laws \eqref{eq:lawc12}, so that at each time step, the kernel functions must be continually recalculated according to the new \(\hat{c}_i(x,t)\) and \(\hat{r}(t)\)
estimate.} 

\par Using the transformation \eqref{eq:backstepping_1}, we get the following target system
\begin{equation}
\label{eq:target_1}
    \begin{aligned}
	w_t(x,t) =&-\lambda w_x(x,t)+ \hat{c}_{1}w(x,t)+\hat{c}_{1}e_1(x,t)+\hat{c}_{2}z(x,t) 
    \\
	&+\int_{0}^{x}\omega(x,\xi,t)w(\xi,t)d\xi 
    \\
	&+ \int_{0}^{x}\kappa(x,\xi,t)z(\xi,t)d\xi  
    \\
	&+\hat{c}_{2}e_2(x,t)+\rho e_1(x,t)\|\varpi(t)\|^2, 
    \\
	z_{t}(x, t)= & \mu z_{x}(x, t)+\hat{c}_{4}z(x,t)- \lambda\breve{K}^{u}(x, 0, t) r(t) e_2(0, t) 
    \\
	&- \lambda \breve{K}^{u}(x, 0, t) \tilde{r}(t) z(0, t)+ \lambda\breve{K}^{u}(x, 0, t) e_1(0, t) 
    \\
	& -\int_{0}^{x} \breve{K}_{t}^{u}(x, \xi, t) w(\xi, t) d \xi 
    \\
	&-\int_{0}^{x} \breve{K}_{t}^{m}(x, \xi, t) T^{-1}[w, z](\xi, t) d \xi 
    \\
	& +T\left[\hat{c}_{1}e_1+\hat{c}_{2}e_2, \hat{c}_{3} e_1+\hat{c}_{4}e_2 \right](x, t)
    \\
	&+\rho T[e_1, e_2](x, t)\|\varpi(t)\|^{2}, \\
	w(0, t)= & r(t) z(0, t)+r(t) e_2(0, t)-e_1(0, t), \\
	z(1, t)= & 0, 
\end{aligned}
\end{equation}
where the coefficient $\omega$ and $\kappa$ are chosen to satisfy
\begin{align}
	\omega(x, \xi, t) & =\hat{c}_{2}(x,t) \breve{K}^{u}(x, \xi, t)+\int_{\xi}^{x} \kappa(x, s, t) \breve{K}^{u}(s, \xi, t) d s, \nonumber\\
	\kappa(x, \xi, t) & =\hat{c}_{2}(x,t) \breve{K}^{m}(x, \xi, t)+\int_{\xi}^{x} \kappa(x, s, t) \breve{K}^{m}(s, \xi, t) d s .\nonumber
\end{align}
From the boundary condition of \eqref{eq:uv_def}, \eqref{eq:backstepping_1} and boudary contion of \eqref{eq:target_1}, the nominal stabilizing	controller is straightforwardly derived as follows
\begin{equation}\label{U_exact}
	U(t)=\int_{0}^{1}\breve{K}^u(1,\xi,t)\hat{u}(\xi,t)d\xi + \int_{0}^{1}\breve{K}^m(1,\xi,t)\hat{m}(\xi,t)d\xi .
\end{equation}
Next, we present the stability of exact adaptive backstepping control, which serves as a guide for what we aim to achieve under the \textit{NO-based approximate adaptive backstepping} design. 
\begin{theorem}\label{theorem1}
	{\em [Stability of exact adaptive backstepping control]} Consider the plant \eqref{eq:uv_def} in feedback with the adaptive control law \eqref{U_exact} along with the update law for $\hat{c}_{1},\hat{c}_{2},\hat{c}_{3},\hat{c}_{4},\hat{r}$ given by \eqref{eq:lawc12} and the passive identifier $\hat{u},\hat{m}$ given by \eqref{eq:uv_identifier_u} satisfies the following properties for all solutions for all time:
	\begin{align*}
		&\|u\|,\|m\|,\|\hat{u}\|,\|\hat{m}\|, \|{u}\|_{\infty},\|{m}\|_{\infty}, \|\hat{u}\|_{\infty},\|\hat{m}\|_{\infty} \in L^2\cap L^{\infty},\\
		& \|u\|,\|m\|, \|\hat{u}\|,\|\hat{m}\|, \|\hat{u}\|_{\infty},\|\hat{m}\|_{\infty},\|\hat{u}\|_{\infty},\|\hat{m}\|_{\infty} \mapsto 0.
	\end{align*}
\end{theorem}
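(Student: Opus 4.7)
The plan is to establish the theorem via the standard passive-identifier adaptive backstepping argument, combining the identifier properties of Lemma~\ref{lemma:Identifier_bound} with a Lyapunov analysis carried out in the target coordinates $(w,z)$ defined by \eqref{eq:backstepping_1}. Because the estimates $\hat c_i$ and $\hat r$ are uniformly bounded by the projection operator, the exact estimated kernels $(\breve K^u,\breve K^m)$ solving \eqref{eq:kernel_1} are uniformly bounded on $\mathcal T_1\times\mathbb R_+$, so the backstepping transformation $T$ in \eqref{eq:backstepping_1} is invertible with $T$ and $T^{-1}$ bounded in $L^2$ and $L^\infty$ uniformly in time. Differentiating the kernel equations in time shows that $\breve K^u_t,\breve K^m_t$ are controlled pointwise by $\|\partial_t\hat c_i\|$ and $|\dot{\hat r}|$, so by Lemma~\ref{lemma:Identifier_bound} both $\|\breve K^u_t\|$ and $\|\breve K^m_t\|$ lie in $L^2(\mathbb R_+)$.

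First I would introduce the Lyapunov functional
\begin{equation*}
V(t)=a\int_0^1 e^{-\delta x}w^2(x,t)\,dx + b\int_0^1 e^{\delta x}z^2(x,t)\,dx,
\end{equation*}
with constants $a,b,\delta>0$ to be chosen. Differentiating along \eqref{eq:target_1}, integrating by parts to exploit the transport terms $-\lambda w_x$ and $\mu z_x$, and using the boundary conditions $w(0,t)=rz(0,t)+re_2(0,t)-e_1(0,t)$ and $z(1,t)=0$, I would obtain a dominant dissipative term $-\sigma V$ plus cross terms. Each cross term falls into one of three categories: quadratic in $(w,z)$ with bounded coefficients (absorbable into $V$ by choosing $\delta$ small enough), quadratic in the identifier-error signals $e_1,e_2,e_1(0,\cdot),e_2(0,\cdot)$ or in the products $\|e_i\|\|\varpi\|$ and $\tilde r\,m(0,\cdot)/\sqrt{1+m^2(0,\cdot)}$, or involving the kernel time-derivatives $\breve K^u_t,\breve K^m_t$. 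Applying Young's inequality to each such term I can bound it by $\varepsilon V(t)+C_\varepsilon g(t)$, where $g\in L^1(\mathbb R_+)$ by virtue of \eqref{properties_3}--\eqref{properties_5}. This yields a differential inequality of the form
\begin{equation*}
\dot V(t)\le -\alpha V(t)+\beta(t)V(t)+g(t),\qquad \beta,g\in L^1(\mathbb R_+),
\end{equation*}
whose solution is globally bounded and integrable, so $V\in L^\infty\cap L^1$ and consequently $\|w\|,\|z\|\in L^\infty\cap L^2$.

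Next I would invert the transformation \eqref{eq:backstepping_1} to transfer these bounds to $\|\hat u\|,\|\hat m\|\in L^\infty\cap L^2$, and then combine with the identifier bounds $\|e_1\|,\|e_2\|\in L^\infty\cap L^2$ to conclude $\|u\|,\|m\|\in L^\infty\cap L^2$. The spatial $L^\infty$ bounds on $(u,m,\hat u,\hat m)$ would come from propagating the transport PDEs \eqref{eq:uv_def}, \eqref{eq:uv_identifier_u} along their characteristics: along each characteristic the state satisfies a scalar ODE whose forcing is composed of already-bounded quantities, giving pointwise bounds together with $L^2$-in-time estimates via a standard characteristic argument. Finally, convergence to zero of each of these norms follows from Barbalat's lemma, since each norm is square integrable and its time derivative is bounded by products of already-bounded quantities, guaranteeing uniform continuity.

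The main obstacle will be controlling the cross terms in \eqref{eq:target_1} that contain the kernel time-derivatives $\breve K^u_t,\breve K^m_t$ and the $\rho\|\varpi\|^2$-weighted error feedback inherited from the passive identifier, because these couple the Lyapunov estimate back to the yet-to-be-bounded plant states $u,m$. The resolution is to use Young's inequality so that the $\|\varpi\|^2$-weighted contribution is paired against an $\|e_i\|^2$ factor, producing the $L^1$ signal $\|e_i\|^2\|\varpi\|^2$ already provided by \eqref{properties_3}, while the kernel-derivative integrals are dominated using $\|\partial_t\hat c_i\|,|\dot{\hat r}|\in L^2$. Once this absorption step succeeds the remainder of the proof proceeds along the template established for coupled $2\times2$ hyperbolic PDEs with constant coefficients, extended here to the spatially varying case.
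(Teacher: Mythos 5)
Your proposal is correct and follows essentially the same route the paper takes: the paper itself does not write out the proof of Theorem~\ref{theorem1} but defers to Theorem~9.1 of \cite{anfinsen2019adaptive}, whose passive-identifier Lyapunov argument in the target coordinates $(w,z)$ with exponentially weighted $L^2$ functionals, absorption of identifier-error and kernel-time-derivative cross terms into $L^1$ disturbances, and pointwise regulation via a characteristics/cascade representation (cf.\ \cite{vazquez2011backstepping}) is exactly what you outline, and is also the template the paper itself spells out in detail for the perturbed case in the proof of Theorem~4.
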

\begin{proof}
	The proof of this theorem follows a similar approach to the proof of Theorem 9.1 in \cite{anfinsen2019adaptive}.
\end{proof}

\begin{figure}[!t]
	\centering{\includegraphics[scale=0.55]{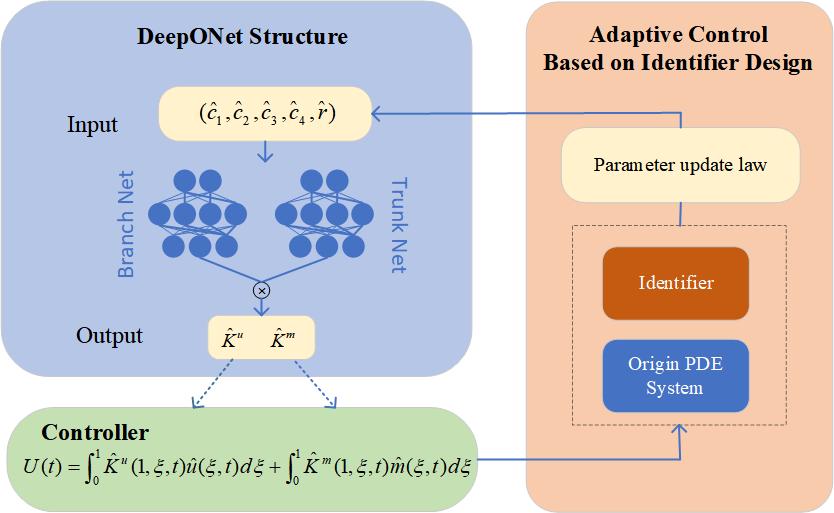}}
	\caption{The operator learning framework for adaptive control. }
	\label{DeepONet-System_2}
\end{figure} 

In summary, the exact adaptive backstepping feedback law \eqref{U_exact} can achieve global stability at the equilibrium point, with the system states $u(x,t),m(x,t)$ converging pointwise to zero. However, this method is computationally intensive because it requires solving Volterra equations \eqref{eq:kernel_1} at each time step t. To simplify the computation, we propose using the NO $\hat{\mathcal{K}}: (\hat{c}_1,\hat{c}_2,\hat{c}_3,\hat{c}_4,\hat{r}) \mapsto (\hat{K}^u,\hat{K}^m)$ to approximate the exact adaptive backstepping gain operator $\mathcal{K}: (\hat{c}_1,\hat{c}_2,\hat{c}_3,\hat{c}_4,\hat{r}) \mapsto (\breve{K}^u,\breve{K}^m)$. This approach allows for neural network evaluation at each time step instead of solving the complex equations. In the following section, we will introduce how to approximate the operator $\mathcal{K}$  using DeepONet and use the resulting approximate control gain functions for boundary stabilization of plant \eqref{eq:uv_def}. And we will use the universal approximation theorem of DeepONet \cite{Approximation} to derive the stability theorem from control gain kernel approximations.

\section{Neural Operator for Approximating Gain Kernels}\label{sec_Approximation}
By proving the continuity and boundedness mentioned above, it can be further deduced that for a set of continuous coefficients within a certain supremely bounded norm, there exists a NO with arbitrary accuracy. 

\subsection{Properties of the gain kernel functions}
\begin{lemm}\label{lem1-kernel}  
	Let $\|\hat{c}_{i}(\cdot, t)\|_{\infty} \leq \bar{c}_i, |\hat{r}| \leq \bar{r}$, $ \forall (x, t) \in [0, 1] \times \mathbb{R}_+$. Then, for any fixed $t\in\mathbb{R}_+$ and for any $\lambda,\mu \in \mathbb{R}_+$, $\hat{c}_i \in C([0,1])$ and $r \in \mathbb{R}_+$, $t\geq0$,
	the gain kernels $\breve{K}^{u},\breve{K}^{m}$ satisfying the PDE systems \eqref{eq:kernel_1}, have unique $C(\mathcal{T}_1)$ solutions with the property 
	\begin{align}
		|\breve{K}^{u}(x,\xi,t)|\leq& \bar{K}, \label{equ-ku-bouded}
        \\
		|\breve{K}^{m}(x,\xi,t)|\leq& \bar{K}, 
		\label{equ-k-bouded} 
        \\
		\|\breve{K}^u_{t}(x,\xi,t)\| \leq & M_{1}\|\hat{c}_{1t}\|+M_{2}\|\hat{c}_{2t}\|+M_{3}\|\hat{c}_{3t}\| \nonumber
        \\
		& +M_{4}\|\hat{c}_{4t}\|+M_{5}|\dot{\hat{r}}|,\label{equ-kut-bouded}
        \\
		\|\breve{K}^m_{t}(x,\xi,t)\| \leq & M_{6}\|\hat{c}_{1t}\|+M_{7}\|\hat{c}_{2t}\|+M_{8}\|\hat{c}_{3t}\|  \nonumber 
        \\
		& +M_{9}\|\hat{c}_{4t}\|+M_{10}|\dot{\hat{r}}|,  \label{equ-kvt-bouded}  
	\end{align}
	where $\bar{K}>0,  M_{j}>0,   j=1,\cdots,10$ are constants depending on the parameter bounds \eqref{eq:bounds}.
	
	\begin{proof}
        Fix time \( t \geq 0 \). The existence of a unique, bounded solution to kernel equations \eqref{eq:kernel_1} are guaranteed by \cite[Theorem A.1]{coron2013local}. Furthermore, this result ensures that the solution satisfies uniform bounds of the form 
\begin{equation*}
	|\breve{K}^{u}(x,\xi,t)|\leq \bar{K},
	|\breve{K}^{m}(x,\xi,t)|\leq \bar{K}, \quad t \geq 0,
\end{equation*}
where \( \bar{K} \) is a constant determined by the compactness of the admissible parameter set $\hat{c}_1,\cdots,\hat{c}_4,r$. 

Differentiating equation \eqref{eq:kernel_1} with respect to time yields the following system in terms of \( \breve{K}^u_t \) and \( \breve{K}^m_t \):
\begin{equation}
\label{eq:B2a}
    \begin{aligned}
		\mu\,  \breve{K}^u_{tx} - \lambda \breve{K}^u_{t\xi} =& \left( \hat{c}_1 - \hat{c}_4 \right) \breve{K}^u_t + \ {\hat{c}}_{3}\breve{K}^m_t 
        \\
		&+ \left( {\hat{c}}_{1t} - {\hat{c}}_{4t}\right) \breve{K}^u +\hat{c}_{3t} \breve{K}^m,  
        \\
		\mu\breve{K}^m_{tx} + \mu\breve{K}^m_{t\xi} =& \hat{c}_2\, \breve{K}^u_t + {\hat{c}}_{2t}\, \breve{K}^u,  
        \\
		\breve{K}^m_t(x, 0) =& \frac{\lambda \hat{r}}{ \mu}\, \breve{K}^u_t(x, 0) + \frac{\lambda\dot{\hat{r}}}{\mu}\, \breve{K}^u(x, 0),  
        \\
		\breve{K}^u_t(x, x) =& - \frac{{\hat{c}}_{3t}}{\lambda + \mu}. 
	\end{aligned}
\end{equation}
Applying again the result of~\cite[Theorem A.1]{coron2013local} to the equations~\eqref{eq:B2a}, one obtains the existence and uniqueness of a bounded solution \( (\breve{K}^u_t, \breve{K}^m_t) \), with bounds of the form \eqref{equ-kut-bouded} -\eqref{equ-kvt-bouded}.

	\end{proof}
\end{lemm}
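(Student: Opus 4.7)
My plan is to treat the system \eqref{eq:kernel_1} as a fixed-$t$ Goursat-type boundary-value problem on the triangle $\mathcal T_1$, and to split the required estimates into two stages. The first stage gives existence, uniqueness, and the uniform $C(\mathcal T_1)$ bound on $(\breve K^u,\breve K^m)$, namely \eqref{equ-ku-bouded}--\eqref{equ-k-bouded}; the second stage, applied to the system obtained by formally differentiating \eqref{eq:kernel_1} in $t$, produces the linear dependence of the time derivatives on $\|\hat c_{it}\|$ and $|\dot{\hat r}|$ stated in \eqref{equ-kut-bouded}--\eqref{equ-kvt-bouded}. Both stages reduce to the same general fact: a $2\times 2$ Goursat PDE with continuous coefficients on $\mathcal T_1$ admits a unique continuous solution whose size depends linearly on the inhomogeneous data.

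For the first stage, with $t$ frozen, I would integrate \eqref{eq:kernel_1} along the two families of characteristics of $\mu\partial_x-\lambda\partial_\xi$ and $\mu\partial_x+\mu\partial_\xi$, and use the boundary data $\breve K^u(x,x,t)=-\hat c_3(x,t)/(\lambda+\mu)$ on the diagonal and $\breve K^m(x,0,t)=(\lambda\hat r/\mu)\breve K^u(x,0,t)$ on the bottom edge, to rewrite the system as a pair of coupled Volterra-type integral equations. On $C(\mathcal T_1)$ equipped with the weighted sup-norm $\|K\|_\sigma=\sup_{(x,\xi)\in\mathcal T_1} e^{-\sigma x}|K(x,\xi)|$ for $\sigma$ sufficiently large in terms of $\bar c_i,\bar r$ from Assumption~1, the induced integral operator is a contraction, so Banach's fixed-point theorem yields a unique solution in $C(\mathcal T_1)$; a direct Gronwall estimate on the integral form then produces the uniform pointwise bound $\bar K$.

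For the second stage, I would formally differentiate \eqref{eq:kernel_1} in $t$, obtaining a coupled system for $(\breve K^u_t,\breve K^m_t)$ of the same Goursat type with the same principal part, but with inhomogeneous forcing $(\hat c_{1t}-\hat c_{4t})\breve K^u+\hat c_{3t}\breve K^m$ and $\hat c_{2t}\breve K^u$ in the interior, together with inhomogeneous boundary data $-\hat c_{3t}/(\lambda+\mu)$ on the diagonal and $(\lambda\dot{\hat r}/\mu)\breve K^u(x,0)$ on the bottom edge. Since the zeroth-order kernels are already bounded by $\bar K$ from the first stage, the whole right-hand side is linear in $\hat c_{it}$ and $\dot{\hat r}$. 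Repeating the Volterra/contraction argument on this derived system yields a bound on $\breve K^u_t,\breve K^m_t$ that is linear in these data, producing the ten constants $M_1,\ldots,M_{10}$, each a combination of $\lambda,\mu,\bar c_i,\bar r,\bar K$.

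The step I expect to be the main obstacle is the norm conversion at the end: integrating along characteristics naturally yields estimates featuring the $L^\infty$ spatial norm of $\hat c_{it}$, whereas the lemma as stated uses the unsubscripted norm $\|\hat c_{it}\|$ on the right-hand side. Bridging this gap requires applying Cauchy--Schwarz inside each characteristic integral before the final Gronwall step, so that the dependence on $\hat c_{it}$ is expressed in the $L^2$ norm, with any extra exponential weights absorbed into the constants $M_j$. Once this conversion is done cleanly, the rest is routine bookkeeping that mirrors the first stage.
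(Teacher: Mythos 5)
Your proposal is correct and follows essentially the same route as the paper: fix $t$, establish existence, uniqueness, and the uniform bound $\bar K$ for the Goursat system \eqref{eq:kernel_1} on $\mathcal T_1$, then differentiate in $t$, observe that the resulting system for $(\breve K^u_t, \breve K^m_t)$ has the same Goursat structure with inhomogeneous interior and boundary data that are linear in $\hat c_{it}$ and $\dot{\hat r}$, and apply the same well-posedness machinery to obtain \eqref{equ-kut-bouded}--\eqref{equ-kvt-bouded}. The only difference is that where the paper simply cites \cite[Theorem A.1]{coron2013local} at both stages, you unpack the characteristic/Volterra/contraction mechanism underlying that theorem, and your closing remark about converting characteristic-line integrals of $\hat c_{it}$ into the stated $\|\hat c_{it}\|$ norm via Cauchy--Schwarz correctly handles a detail the paper's citation-only proof leaves implicit.
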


\subsection{Approximation of the neural operator}
In the following discussion, we will first introduce the universal approximation theorem of DeepONet. This theorem demonstrates DeepONet's ability to approximate operators, enabling us to use it to learn the backstepping gain kernel mapping of PDEs and theoretically guarantee the stability of the adaptive control system.
\textcolor{black}{
\begin{theorem}\label{TheoDeepONet0}
	(DeepONet universal approximation theorem\cite{Approximation}). Let $X\subset \mathbb{R}^{d_x}$ and $Y\subset \mathbb{R}^{d_y}$ be compact sets of vectors $x \in X$ and $y \in Y$, respectively. Let $\mathcal{U}:X \mapsto U \subset \mathbb{R}^{d_u}$ and $\mathcal{V}:Y \mapsto V \subset \mathbb{R}^{d_v}$ be sets of continuous functions $u(x)$ and $m(y)$, respectively. Let $\mathcal{U}$ be also compact. Assume the operator $\mathcal{G}:\mathcal{U} \mapsto \mathcal{V}$ is continuous. Then for all $\epsilon >0$, there exist $n^*, p^* \in \mathbb{N}$ such that for each $n\geq n^*,p\geq p^*$, there exist $\theta^{(i)}, \vartheta^{(i)}$ for neural networks $f^{\mathcal{N}}(\cdot;\theta^{(i)}), g^{\mathcal{N}}(\cdot;\vartheta^{(i)}), i=1,\dots,p$, and $x_j \in X,j=1,\dots,n$, with corresponding $\mathbf{u}_n=(u(x_1),u(x_2),\dots,u(x_n))^T$, such that 
	\begin{equation*}
		|\mathcal{G}(u)(y)-\mathcal{G}_{\mathbb{N}}(\mathbf{u}_n)(y)|\leq \epsilon
	\end{equation*}
	where 
	\begin{equation*}
		\mathcal{G}_{\mathbb{N}}(\mathbf{u}_n)(y)=\sum\limits_{i=1}^{p} g^{\mathcal{N}}(\mathbf{u}_n;\vartheta^{(i)})f^{\mathcal{N}}(y;\theta^{(i)}),
	\end{equation*}
	for all functions $u\in \mathcal{U}$ and for all values $y\in Y$ of $\mathcal{G}(u)$.
\end{theorem}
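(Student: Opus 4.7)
The plan is to follow the classical Chen--Chen strategy for universal approximation of continuous nonlinear operators: reduce the infinite-dimensional problem to a finite-dimensional one by sampling the input function, then invoke the standard universal approximation theorem for feedforward neural networks twice, once on the branch network (which processes $\mathbf{u}_n$) and once on the trunk network (which processes $y$), stitching the two together through a Stone--Weierstrass style tensor-product argument.

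First I would exploit the compactness of $\mathcal{U}$. Since $\mathcal{U} \subset C(X,\mathbb{R}^{d_u})$ is compact in the sup-norm topology, the Arzel\`a--Ascoli theorem guarantees that $\mathcal{U}$ is uniformly bounded and equicontinuous on the compact set $X$. Given a target accuracy $\epsilon > 0$, I would choose sample points $x_1,\ldots,x_n \in X$ dense enough (depending only on the equicontinuity modulus of $\mathcal{U}$) so that the encoder $E_n: u \mapsto \mathbf{u}_n = (u(x_1),\ldots,u(x_n))$ is ``uniformly invertible'' on $\mathcal{U}$, in the sense that any reconstruction $R_n(\mathbf{u}_n)$ (for instance, piecewise constant on a Voronoi partition of $X$) satisfies $\|R_n(E_n(u)) - u\|_\infty \leq \eta$ for every $u \in \mathcal{U}$. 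Because $\mathcal{G}:\mathcal{U}\to\mathcal{V}$ is continuous on the compact domain $\mathcal{U}$, it is uniformly continuous, so the perturbation induced in $\mathcal{G}(u)(y)$ can be kept below $\epsilon/3$ by choosing $\eta$ small.

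Next I would work with the lifted continuous map $\tilde{\mathcal{G}}: E_n(\mathcal{U}) \times Y \to \mathbb{R}^{d_v}$ defined by $\tilde{\mathcal{G}}(\mathbf{u}_n,y) := \mathcal{G}(R_n(\mathbf{u}_n))(y)$. Both $E_n(\mathcal{U}) \subset \mathbb{R}^n$ and $Y \subset \mathbb{R}^{d_y}$ are compact, so by the Stone--Weierstrass theorem the algebra generated by products of continuous functions of $\mathbf{u}_n$ and of $y$ is dense in $C(E_n(\mathcal{U}) \times Y)$. This yields an integer $p$ and pairs $(\tilde g_i,\tilde f_i)$ of continuous functions so that the separable approximation $\sum_{i=1}^{p}\tilde g_i(\mathbf{u}_n)\tilde f_i(y)$ lies within $\epsilon/3$ of $\tilde{\mathcal{G}}$. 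Finally, the classical single-hidden-layer universal approximation theorem of Cybenko and Hornik lets me approximate each $\tilde g_i$ by a branch network $g^{\mathcal{N}}(\cdot;\vartheta^{(i)})$ and each $\tilde f_i$ by a trunk network $f^{\mathcal{N}}(\cdot;\theta^{(i)})$ to aggregate error $\epsilon/3$; a triangle inequality over the three approximation layers then delivers the advertised bound.

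The main obstacle, and the step requiring the most care, is propagating these three approximations uniformly in $u\in\mathcal{U}$ and $y\in Y$ without letting $n$ or $p$ implicitly depend on the particular sample. The sample count $n$ is governed by the equicontinuity modulus of $\mathcal{U}$ (which in our application is inherited from the bounds $\bar{c}_i,\bar r$ of Assumption~1 and the regularity of the kernel operator $\mathcal{K}$ established in Lemma~\ref{lem1-kernel}), while $p$ is governed by the modulus of continuity of $\tilde{\mathcal{G}}$ on the compact product set. Bookkeeping these constants so that they depend only on $\epsilon$ and the compact set $\mathcal{U}$, rather than on individual functions, is what makes the statement truly uniform, and it is the single conceptual step that goes beyond a routine concatenation of classical theorems.
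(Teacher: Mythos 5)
The paper does not prove this theorem: it is recalled as a known result and attributed entirely to \cite{Approximation}, so there is no in-paper proof to compare against.

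Evaluating your sketch on its own terms, the Chen--Chen blueprint you describe (sample, reduce to a finite-dimensional continuous map, separable Stone--Weierstrass approximation, classical one-hidden-layer universal approximation for branch and trunk, triangle inequality) is the canonical route, but there is a genuine gap at the construction of $\tilde{\mathcal{G}}$. You set $\tilde{\mathcal{G}}(\mathbf{u}_n,y):=\mathcal{G}(R_n(\mathbf{u}_n))(y)$, where $R_n$ is a piecewise-constant reconstruction. However, $\mathcal{G}$ is defined only on $\mathcal{U}$, a compact subset of $C(X,U)$, and $R_n(\mathbf{u}_n)$ generically fails to lie in $\mathcal{U}$: a piecewise-constant interpolant is not even continuous, and no reconstruction scheme can be guaranteed to land inside an arbitrary compact subset of $C(X,U)$. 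Consequently $\tilde{\mathcal{G}}$ is not well-defined, and the subsequent Stone--Weierstrass and universal-approximation steps have no continuous target function on $E_n(\mathcal{U})\times Y$ to act on. Two standard repairs exist: (a) first extend $\mathcal{G}$ continuously from the closed set $\mathcal{U}$ to all of $C(X,U)$ via a Dugundji--Tietze extension before composing with $R_n$, then argue uniform continuity on the compact set $\mathcal{U}\cup R_n(E_n(\mathcal{U}))$; or (b) dispense with $R_n$ altogether, as in Chen--Chen's original argument, by covering $\mathcal{U}$ with finitely many small balls and using a partition of unity to build a continuous function of $\mathbf{u}_n$ approximating $u\mapsto\mathcal{G}(u)(y)$, exploiting that equicontinuity of $\mathcal{U}$ together with uniform continuity of $\mathcal{G}$ make $\mathcal{G}(\cdot)(y)$ nearly constant on each encoder fiber $E_n^{-1}(\mathbf{u}_n)\cap\mathcal{U}$. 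With either repair in place, the rest of your outline goes through.
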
}

\par Figure \ref{DeepONet-System_2} presents a schematic diagram of the control circuit, illustrating the utilization of neural operators to accelerate the generation process of gain kernel functions in PDE adaptive control. 

\textcolor{black}{As illustrated in Figure~\ref{structure}, the DeepONet consists of two subnetworks: a Branch Net and a Trunk Net. The Branch Net takes a five-channel 2D input and extracts spatial features using two convolutional layers, each with a kernel size of 5 and a stride of 2. The output is flattened and passed through two fully connected layers with 512 and 256 neurons, respectively, each followed by a ReLU activation. The Trunk Net receives spatial coordinate input and processes it through three fully connected layers with 884, 128 and 256 neurons, also activated by ReLU functions. The outputs from both networks are then combined via inner product to produce the approximated kernel function valu $\hat{\mathcal{K}}$.}

\begin{figure}[t]
	\centerline{\includegraphics[scale=0.55]{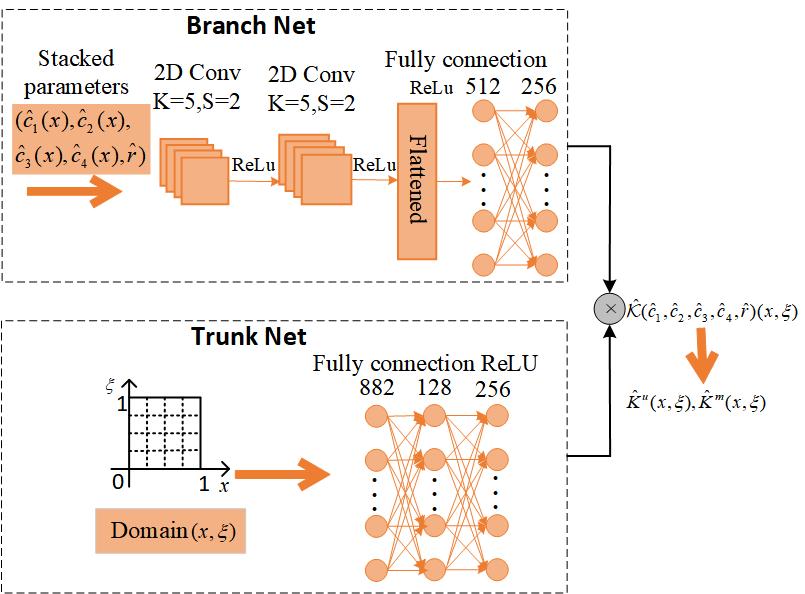}}
	\caption{ The DeepONet structure for operator $\hat{\mathcal{K}}$\label{structure}}
\end{figure}

\begin{theorem}\label{th:NO}
	{\em [Existence of a neural operator to approximating the kernels]} 
	Fix $t>0$. Fix a compact set $K \subset (C([0, 1]))^4 \times\mathbb{R}$ and define the operator $\mathcal{K}: K \mapsto (C(\mathcal{T}_1))^2$
\begin{align*}
	\mathcal{K}(\hat{c}_1,\hat{c}_2,\hat{c}_3,\hat{c}_4,\hat{r}) (\cdot,t):=(\breve{K}^u(x,\xi),\breve{K}^m(x,\xi)).
\end{align*}
Then, for all $\epsilon> 0$, there exists a neural operator $\mathcal{\hat{K}}: K \mapsto (C(\mathcal{T}_1))^2$ such that \textcolor{black}{for all $ (x,\xi) \in \mathcal{T}_1$}, 
	\begin{align*}
		|\mathcal{K}(\hat{c}_1,\hat{c}_2,\hat{c}_3,\hat{c}_4,\hat{r})(\cdot,t)-\hat{\mathcal{K}}(\hat{c}_1,\hat{c}_2,\hat{c}_3,\hat{c}_4,\hat{r})(\cdot,t)|\leq\epsilon.
	\end{align*}
\end{theorem}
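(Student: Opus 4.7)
The plan is to apply the DeepONet universal approximation theorem (Theorem~\ref{TheoDeepONet0}) to the kernel operator $\mathcal{K}$. Three ingredients must be checked: (i) the input set $K$ is a compact subset of $(C([0,1]))^4 \times \mathbb{R}$, which is given by hypothesis; (ii) for each admissible input the image $(\breve{K}^u, \breve{K}^m)$ lies in $(C(\mathcal{T}_1))^2$, which is exactly the regularity statement proved in Lemma~\ref{lem1-kernel}; and (iii) $\mathcal{K}$ is continuous from $K$, with the product $\|\cdot\|_\infty$ topology, to $(C(\mathcal{T}_1))^2$ equipped with the sup-norm. Items (i) and (ii) are in hand, so the substance of the proof is (iii).

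To establish continuity, I would first convert the Goursat-form system \eqref{eq:kernel_1} into an equivalent integral equation by integrating along the characteristics, exactly as in the successive-approximation scheme of \cite[Theorem A.1]{coron2013local}. Given two parameter tuples $p=(\hat{c}_1,\ldots,\hat{c}_4,\hat{r})$ and $p'=(\hat{c}_1',\ldots,\hat{c}_4',\hat{r}')$ in $K$, let $(\breve{K}^u,\breve{K}^m)$ and $(\breve{K}^{u\prime},\breve{K}^{m\prime})$ denote the corresponding kernel pairs, and set $\Delta K^u = \breve{K}^u-\breve{K}^{u\prime}$, $\Delta K^m = \breve{K}^m-\breve{K}^{m\prime}$. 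Subtracting the integral equations yields a linear system for $(\Delta K^u,\Delta K^m)$ whose kernels depend on $p$ and whose source terms are products of the differences $(\hat{c}_i-\hat{c}_i')$ and $(\hat{r}-\hat{r}')$ with the uniformly bounded kernels supplied by Lemma~\ref{lem1-kernel}. A Gronwall-type argument on $\mathcal{T}_1$, using the uniform bounds \eqref{equ-ku-bouded}--\eqref{equ-k-bouded} that hold over the compact set $K$, then delivers a Lipschitz estimate of the form
\begin{equation*}
\|\Delta K^u\|_\infty + \|\Delta K^m\|_\infty \;\leq\; C\bigl(\textstyle\sum_{i=1}^{4}\|\hat{c}_i-\hat{c}_i'\|_\infty + |\hat{r}-\hat{r}'|\bigr),
\end{equation*}
with a constant $C$ uniform on $K$, which proves (iii).

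Once continuity of $\mathcal{K}$ is in place, Theorem~\ref{TheoDeepONet0} applies directly. Since the codomain consists of pairs of scalar functions on $\mathcal{T}_1$, I would invoke the theorem once for each component $\breve{K}^u$ and $\breve{K}^m$ with tolerance $\epsilon/2$, producing branch/trunk networks $(g^\mathcal{N},f^\mathcal{N})$ whose discretized input is a finite sample of the continuous coefficients $(\hat{c}_1,\ldots,\hat{c}_4)$ augmented by the scalar $\hat{r}$, and then concatenate the two approximations into a single vector-valued operator $\hat{\mathcal{K}}$. The uniform bound on $\mathcal{T}_1$ follows by taking the maximum over the two components.

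The main obstacle will be the Gronwall step, because the coupling between the two kernels is not purely in-domain: the boundary condition $\breve{K}^m(x,0,t)=(\lambda\hat{r}/\mu)\breve{K}^u(x,0,t)$ introduces a sensitivity to $\hat{r}$ that propagates into the interior through the characteristic equations, and the two transport speeds $\lambda$ and $\mu$ must be treated separately in the characteristic integrals. Careful bookkeeping is needed to absorb these boundary contributions into the Gronwall loop, but the uniform kernel bounds from Lemma~\ref{lem1-kernel} together with compactness of $K$ make the constant $C$ independent of the particular pair $(p,p')$, which is precisely what is required to invoke Theorem~\ref{TheoDeepONet0}.
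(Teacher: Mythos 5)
Your proposal is correct and takes essentially the same approach as the paper: verify continuity of $\mathcal{K}$ and then invoke the DeepONet universal approximation theorem (Theorem~\ref{TheoDeepONet0}). The one place you go beyond the paper is in actually proving continuity via a Gronwall/Lipschitz estimate on the Goursat integral equations — the paper simply asserts that continuity of $\mathcal{K}$ ``is derived from Lemma~\ref{lem1-kernel},'' which as stated only gives existence, uniqueness, and uniform boundedness of the kernels and their time derivatives, so your step~(iii) fills a step the paper leaves implicit rather than constituting a different route.
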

\begin{proof}
	The continuity of the operator \(\mathcal{K}\) is derived from Lemma \ref{lem1-kernel}. And this result is based on Theorem 2.1 proposed by B. Deng et al. in their study \cite{Approximation}.
\end{proof}

\section{ Stabilization under DeepONet-Approximated Gain Feedback}\label{section_DeepONet}\label{sec_DeepONet}
We will demonstrate that although the adaptive controller uses approximate estimated kernel functions, the stability of the system is still guaranteed. Based on Theorem \ref{theorem1}, we present the system stability proof for the adaptive backstepping controller using approximate estimation kernels $\hat{K}^u,\hat{K}^m$  in the following theorem.
\begin{theorem}
	{\em [Stabilization under approximate adaptive backstepping control]} For all $\bar{c}_1,\bar{c}_2,\bar{c}_3,\bar{c}_4,\bar{r}>0$, there exists a constant $\epsilon_0>0$ such that for all NO approximations $\hat{K}^u,\hat{K}^m$ of accuracy $\epsilon  \in (0,\epsilon_0)$ provided by Theorem \ref{th:NO}, the plant \eqref{eq:uv_def} in feedback with the adaptive control law 
	\begin{equation}\label{eq:U_NO}
		U(t)=\int_{0}^{1}\hat{K}^u(1,\xi,t)\hat{u}(\xi,t)d\xi + \int_{0}^{1}\hat{K}^m(1,\xi,t)\hat{m}(\xi,t)d\xi,
	\end{equation} 
	along with the update law for $\hat{c}_{1},\hat{c}_{2},\hat{c}_{3},\hat{c}_{4}$ and $ \hat{r}$ given by \eqref{eq:lawc12} with any Lipschitz initial condition $\hat{c}_{10}=\hat{c}_{1}(\cdot,0),\hat{c}_{20}=\hat{c}_{2}(\cdot,0),\hat{c}_{30}=\hat{c}_{3}(\cdot,0),\hat{c}_{40}=\hat{c}_{4}(\cdot,0)$ such that $\|\hat{c}_{10}\|\leq \bar{c}_1, \|\hat{c}_{20}\|\leq \bar{c}_2, \|\hat{c}_{30}\|\leq \bar{c}_3, \|\hat{c}_{40}\|\leq \bar{c}_4$, and $\textcolor{black}{|\hat{r}(0)|}\leq \bar{r}$. The passive identifier $\hat{u}$, $\hat{m}$ given by \eqref{eq:uv_identifier_u} with any initial condition $\hat{u}_0=\hat{u}(\cdot,0),\hat{m}_0=\hat{m}(\cdot,0)$ such that $\|\hat{u}_0\|<\infty,\|\hat{m}_0\|<\infty$, the following properties hold:
	\begin{align*}
		&\|u\|,\|m\|,\|\hat{u}\|,\|\hat{m}\|, \|{u}\|_{\infty},\|{m}\|_{\infty}, \|\hat{u}\|_{\infty},\|\hat{m}\|_{\infty} \in L^2\cap L^{\infty},\\
		& \|{u}\|_{\infty},\|{m}\|_{\infty},\|\hat{u}\|_{\infty},\|\hat{m}\|_{\infty} \mapsto 0.
	\end{align*}
	Moreover, for the equilibrium $(u,m,\hat{u},\hat{m},\hat{c}_{1},\hat{c}_{2},\hat{c}_{3},\hat{c}_{4},\hat{r})=(0,0,0,0,{c}_{1},{c}_{2},{c}_{3},{c}_{4},r)$ the following global stability estimate holds 
	\begin{equation}\label{eq:S(t)}
		S(t)\leq 2\dfrac{k_2}{k_1}\theta_2 S(0) e^{\theta_1 k_2S(0)}, \quad t>0,
	\end{equation}
	where
	\begin{align*}
		S(t) :=& \|u\|^2+\|m\|^2+\|\hat{u}\|^2+\|\hat{m}\|^2+\|\tilde{c}_{1}\|^2+\|\tilde{c}_{2}\|^2 \nonumber \\
		&+\|\tilde{c}_{3}\|^2+\|\tilde{c}_{4}\|^2+\tilde{r}^2,
	\end{align*}
	and $k_1$, $k_2$, $\theta_1$ and $\theta_2$ are strictly positive constants.
\end{theorem}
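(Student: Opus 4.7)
The plan is to mirror the proof of Theorem \ref{theorem1} while applying the \emph{exact} backstepping transformation \eqref{eq:backstepping_1}, built from the update-law kernels $(\breve K^u,\breve K^m)$ of \eqref{eq:kernel_1}, to the closed-loop system whose boundary input is the NO-approximated law \eqref{eq:U_NO}. Setting the approximation error as $\tilde K^u:=\hat K^u-\breve K^u$ and $\tilde K^m:=\hat K^m-\breve K^m$, Theorem \ref{th:NO} ensures $\|\tilde K^u\|_\infty,\|\tilde K^m\|_\infty\leq\epsilon$ on $\mathcal T_1$. Under this transformation the $(w,z)$ dynamics coincide with the target system \eqref{eq:target_1} except that the outlet boundary is no longer zero but
\begin{equation*}
z(1,t)=\int_{0}^{1}\tilde K^u(1,\xi,t)\hat u(\xi,t)\,d\xi+\int_{0}^{1}\tilde K^m(1,\xi,t)\hat m(\xi,t)\,d\xi,
\end{equation*}
so by Cauchy--Schwarz $|z(1,t)|^{2}\leq 2\epsilon^{2}\bigl(\|\hat u\|^{2}+\|\hat m\|^{2}\bigr)$. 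Crucially, the identifier \eqref{eq:uv_identifier_u} and update law \eqref{eq:lawc12} are unaffected by replacing $\breve K$ with $\hat K$ in the boundary input, so Lemma \ref{lemma:Identifier_bound} remains in force.

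Next I would work with a Lyapunov functional of the form
\begin{equation*}
V=V_{\rm id}+a_1\!\int_{0}^{1}\!e^{-\delta x}w^{2}\,dx+a_2\!\int_{0}^{1}\!e^{\delta x}z^{2}\,dx+V_{\theta},
\end{equation*}
where $V_{\rm id}$ is the weighted $L^{2}$-norm of $(e_1,e_2)$ used to prove Lemma \ref{lemma:Identifier_bound} and $V_{\theta}=\tfrac{1}{2\gamma_1}\|\tilde c_1\|^{2}+\cdots+\tfrac{1}{2\gamma_5}\tilde r^{2}$. Differentiating $V$ along the perturbed target system and integrating by parts in the transport terms produces the dissipative contributions $-\tfrac{\lambda\delta a_1}{2}\!\int e^{-\delta x}w^{2}-\tfrac{\mu\delta a_2}{2}\!\int e^{\delta x}z^{2}$, together with the standard boundary terms at $x=0$ that are absorbed through $|e_1(0,t)|^{2}, |e_2(0,t)|^{2}$ and the dissipative $-\tfrac{\mu a_2}{2}z^{2}(0,t)$. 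Property \eqref{eq:projection_proper} cancels the indefinite cross terms in $\tilde c_i,\tilde r$ coming from the $\hat c_i,\hat r$-dynamics in \eqref{eq:target_1}, exactly as in \cite{anfinsen2019adaptive}, while the Volterra perturbations in the $z$-equation are controlled by Young's inequality using $\|\breve K^u\|_\infty,\|\breve K^m\|_\infty\leq\bar K$ from Lemma \ref{lem1-kernel}. The genuinely new term is $\tfrac{\mu a_2}{2}e^{\delta}z^{2}(1,t)\leq\mu a_2 e^{\delta}\epsilon^{2}\bigl(\|\hat u\|^{2}+\|\hat m\|^{2}\bigr)$, which by the invertibility of \eqref{eq:backstepping_1} is dominated by $C_{\bar K}\epsilon^{2}(\|w\|^{2}+\|z\|^{2})$; choosing $\epsilon_0$ so that $C_{\bar K}\epsilon_0^{2}<\min\{\lambda\delta a_1,\mu\delta a_2\}/2$ makes this term absorbable.

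The main obstacle is that several right-hand-side terms in \eqref{eq:target_1} carry the nonlinear factor $\|\varpi(t)\|^{2}=\|u\|^{2}+\|m\|^{2}$, so the resulting differential inequality on $V$ is not linear but of the form
\begin{equation*}
\dot V\leq -\alpha V+\beta\,g(t)\,V,
\end{equation*}
with $g(t):=\|e_1\|^{2}+\|e_2\|^{2}+\|\breve K^u_{t}\|^{2}+\|\breve K^m_{t}\|^{2}+|e_1(0,t)|^{2}+|e_2(0,t)|^{2}$. The crucial observation is that properties \eqref{properties_3}--\eqref{properties_5} of Lemma \ref{lemma:Identifier_bound}, together with the estimates \eqref{equ-kut-bouded}--\eqref{equ-kvt-bouded} from Lemma \ref{lem1-kernel} and the $L^{2}$-integrability of $\partial_t\hat c_i,\dot{\hat r}$, imply $g\in L^{1}(\mathbb{R}_+)$ with $\int_{0}^{\infty}g\,dt\leq C\,V(0)$. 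Gronwall's inequality then gives $V(t)\leq V(0)\exp\bigl(\beta\!\int_{0}^{t}g\bigr)\leq V(0)e^{\beta CV(0)}$, which, combined with the norm equivalence $k_1 S\leq V\leq k_2 S$ provided by the invertibility of \eqref{eq:backstepping_1}, yields the stability estimate \eqref{eq:S(t)} with $\theta_1,\theta_2$ absorbing the constants $\beta,C$.

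Finally, the announced $L^{2}\cap L^{\infty}$ regularity and decay of $(u,m,\hat u,\hat m)$ follow by the same Barbalat-type argument as in \cite{anfinsen2019adaptive}: the identifier bounds of Lemma \ref{lemma:Identifier_bound} and the invertibility of \eqref{eq:backstepping_1} transfer the $(w,z)$ estimates back to the original variables, and the uniform continuity needed to conclude pointwise decay of $\|{u}\|_\infty,\|{m}\|_\infty,\|\hat u\|_\infty,\|\hat m\|_\infty$ is obtained from the time-derivative bounds on the kernels provided by \eqref{equ-kut-bouded}--\eqref{equ-kvt-bouded} and on the states inherited from the hyperbolic structure of \eqref{eq:uv_def}.
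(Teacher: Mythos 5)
Your plan follows the paper's proof closely in its main ideas: apply the exact transformation built from $(\breve K^u,\breve K^m)$, obtain a perturbed target system whose only departure from \eqref{eq:target_1} is the boundary value $z(1,t)=\Gamma(t)$ of size $\mathcal O(\epsilon)(\|w\|+\|z\|)$, absorb the resulting boundary term into the dissipation by taking $\epsilon<\epsilon_0$, and close via integrability of the remaining perturbations using Lemma~\ref{lemma:Identifier_bound} together with a Gronwall-type argument. This is essentially the paper's route.

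There is, however, a gap in the Lyapunov step. You assert a differential inequality of the purely multiplicative form $\dot V\leq-\alpha V+\beta g(t)V$, claiming that all residual terms can be folded into the decay of $V_{\rm id}$. That is not available. The inequality that actually arises (compare \eqref{eq:V6_dot1_1}) has both a multiplicative part $g_6(t)V$ and an irreducibly \emph{additive} part $g_7(t)$, containing terms such as $e_1^2(0,t)$, $e_2^2(0,t)$, $\|e_1\|^2$, $\|e_2\|^2$, and $\frac{\tilde r^2 z^2(0,t)}{1+m^2(0,t)}$. The identifier dissipation \eqref{eq:dotV1} only cancels $e_1^2(0,t)\,m^2(0,t)$ (not $e_1^2(0,t)$ alone) and, via \eqref{eq:dotV3}, $\frac{\tilde r^2 m^2(0,t)}{1+m^2(0,t)}$ (not $\frac{\tilde r^2 z^2(0,t)}{1+m^2(0,t)}$), so pointwise-in-time absorption of $g_7$ into $\dot V_{\rm id}$ fails. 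The correct resolution, which the paper uses, is to leave $g_7$ as an additive $L^1$ perturbation, establish $\|g_6\|_1\leq\theta_1 V_1(0)$ and $\|g_7\|_1\leq\theta_2 V_1(0)$ from Lemma~\ref{lemma:Identifier_bound}, and invoke Lemma~B.6 of \cite{krstic1995nonlinear} for the inequality $\dot V\leq-\tfrac12 V+g_6 V+g_7$, yielding $V(t)\leq(e^{-t/2}V(0)+\|g_7\|_1)e^{\|g_6\|_1}$; combining with the bounded $V_1(t)\leq V_1(0)$ then gives \eqref{eq:S(t)}. Apart from this, your perturbed-target-system derivation, the choice of $\epsilon_0$, and the final Barbalat-type pointwise decay argument match the paper's proof.
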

\begin{proof}
	This proof mainly refers to \cite[Chapter 9]{anfinsen2019adaptive}, and makes necessary supplements to the gain approximation error while reducing repetition.

	\par A. DeepONet-perturbed target system
	\par We consider the following adaptive backstepping transformation  \eqref{eq:backstepping_1}
    \begin{equation}
    \label{eq:backstepping_NO}
        \begin{aligned}
		w(x,t)=&\hat{u}(x,t),
        \\
		z(x,t)=&\hat{m}(x,t)-\int_{0}^{x}\breve{K}^{u}(x,\xi,t)\hat{u}(\xi,t)d\xi 
        \\
		&- \int_{0}^{x}\breve{K}^{m}(x,\xi,t)\hat{m}(\xi,t)d\xi =: T[\hat{u},\hat{m}](x,t),
	\end{aligned}
    \end{equation}
	where $\breve{K}^{u}$ and $\breve{K}^{m}$ are exact solutions to kernel function \eqref{eq:kernel_1}.
	The transformation is an invertible backstepping transformation, with inverse in the same form
    \begin{equation}
    \label{eq:inverse_2}
        \begin{aligned}
		\hat{u}(x,t)=&w(x,t),
        \\
		\hat{m}(x,t)=& z(x,t)+\int_{0}^{x}\breve{L}^{u}w(\xi,t)d\xi 
        \\ &+\int_{0}^{x}\breve{L}^{m} z(\xi,t)d\xi =:T^{-1}[w,z](x,t) 
	\end{aligned}
    \end{equation}
	where $T^{-1}$ is an operator similar to $T$. From Lemma \ref{lem1-kernel}, $\breve{K}^u,\breve{K}^m$ are continuous, there exist unique continuous inverse kernels $\breve{L}^u,\breve{L}^m$ defined on $\mathcal{T}_1$ and there exists a constant $\bar{L}$ so that $\|\breve{L}^u\|_{\infty} \leq \bar{L}$, $\|\breve{L}^m\|_{\infty} \leq \bar{L}$. We will derive the DeepONet-perturbed target system with exact estimated kernels.
	Because the controller we have chosen is \eqref{eq:U_NO}, where the kernels $\hat{K}^u$ and $\hat{K}^m$ are approximated by NO. 
	This transformation lead to the following target system 
	\begin{align}
		w_t(x,t) =&-\lambda w_x(x,t)+ \hat{c}_{1}w(x,t)+\hat{c}_{1}e_1(x,t)+\hat{c}_{2}z(x,t) \nonumber\\
		&+\int_{0}^{x}\omega(x,\xi,t)w(\xi,t)d\xi \nonumber\\
		&+ \int_{0}^{x}\kappa(x,\xi,t)z(\xi,t)d\xi  \nonumber\\
		&+\hat{c}_{2}e_2(x,t)+\rho e_1(x,t)\|\varpi(t)\|^2, \label{eq:target_NO_1}\\
		z_{t}(x, t)= & \mu z_{x}(x, t)+\hat{c}_{4}z(x,t)- \lambda\breve{K}^{u}(x, 0, t) r(t) e_2(0, t) \nonumber\\
		&- \lambda \breve{K}^{u}(x, 0, t) \tilde{r}(t) z(0, t)+ \lambda\breve{K}^{u}(x, 0, t) e_1(0, t) \nonumber\\
		& -\int_{0}^{x} \breve{K}_{t}^{u}(x, \xi, t) w(\xi, t) d \xi \nonumber\\
		&-\int_{0}^{x} \breve{K}_{t}^{m}(x, \xi, t) T^{-1}[w, z](\xi, t) d \xi \nonumber\\
		& +T\left[\hat{c}_{1}e_1+\hat{c}_{2}e_2, \hat{c}_{3} e_1+\hat{c}_{4}e_2 \right](x, t)\nonumber\\
		&+\rho T[e_1, e_2](x, t)\|\varpi(t)\|^{2}, \\
		w(0, t)= & r(t) z(0, t)+r(t) e_2(0, t)-e_1(0, t), \\
		z(1, t)=& -\int_{0}^{1}\tilde{K}^u(1,\xi,t)w(\xi,t)d\xi\nonumber\\
		&-\int_{0}^{1}\tilde{K}^m(1,\xi,t)T^{-1}[w,z](\xi,t)d\xi:=\Gamma(t)\label{eq:target_NO_4},
	\end{align}
	where 
	\begin{align}
		\omega(x, \xi, t) & =\hat{c}_{2}(x,t) \breve{K}^{u}(x, \xi, t)+\int_{\xi}^{x} \kappa(x, s, t) \breve{K}^{u}(s, \xi, t) d s, \nonumber\\
		\kappa(x, \xi, t) & =\hat{c}_{2}(x,t) \breve{K}^{m}(x, \xi, t)+\int_{\xi}^{x} \kappa(x, s, t) \breve{K}^{m}(s, \xi, t) d s .\nonumber
	\end{align}
	The main difference between the current system \eqref{eq:target_NO_1}-\eqref{eq:target_NO_4} and the system described in \eqref{eq:target_1} is the perturbation $\Gamma(t)$ in the boundary conditions \eqref{eq:target_NO_4}. 
	This difference is due to the controller \eqref{eq:U_NO} using an approximated estimated kernels $\hat{K}^u$ and $\hat{K}^m$ instead of the exact estimated kernels $\breve{K}^u$ and $\breve{K}^m$. The specific derivation process of \eqref{eq:target_NO_4} is as follows
	\begin{align*}
		z(1,t)=&\hat{m}(1,t)-\int_{0}^{1}\breve{K}^{u}(1,\xi,t)\hat{u}(\xi,t)d\xi\\
		&-\int_{0}^{1}\breve{K}^{m}(1,\xi,t)\hat{m}(\xi,t)d\xi\\
		=&U(t)-\int_{0}^{1}\breve{K}^{u}(1,\xi,t)\hat{u}(\xi,t)d\xi\\
		&-\int_{0}^{1}\breve{K}^{m}(1,\xi,t)\hat{m}(\xi,t)d\xi\\
		=&\int_{0}^{1}\hat{K}^u(1,\xi,t)\hat{u}(\xi,t)d\xi +\int_{0}^{1}\hat{K}^m(1,\xi,t)\hat{m}(\xi,t)d\xi\\
		&-\int_{0}^{1}\breve{K}^{u}(1,\xi,t)\hat{u}(\xi,t)d\xi-\int_{0}^{1}\breve{K}^{m}(1,\xi,t)\hat{m}(\xi,t)d\xi\\
		=&-\int_{0}^{1}\tilde{K}^{u}(1,\xi,t)\hat{u}(\xi,t)d\xi-\int_{0}^{1}\tilde{K}^{m}(1,\xi,t)\hat{m}(\xi,t)d\xi. 
	\end{align*}
	By using \eqref{eq:inverse_2}, we obtained \eqref{eq:target_NO_4}. 
	\par In the following part, we introduce the spatial $L^2$ boundedness and regulation of plant and observer states.
	\par We use the following Lyapunov function candidate
	\begin{align}
		V(t):=V_4(t)+aV_5(t),
	\end{align}
	 and
	\begin{align}
		V_4(t)&:=\|w(t)\|_{-\delta}^2=\int_{0}^{1}e^{-\delta x}w^2(x,t)dx,\\
		V_5(t)&:=\|z(t)\|_k^2=\int_{0}^{1}e^{kx}z^2(x,t)dx,
	\end{align} 
	where $a,\delta,k>0$.  We deduce that there exist positive constants \( a_1, a_2 > 0 \) such that
\[
a_1 (\|w(t)\| + \|z(t)\|)^2 \leq V(t) \leq a_2 (\|w(t)\| + \|z(t)\|)^2.
\]
    Before we start the formal calculations of the Lyapunov function, we present the formulas derived from Lemma \ref{lem1-kernel} 
	\begin{align*}
		&\|\breve{K}^u\|_{\infty} \leq \bar{K}, \|\breve{K}^m\|_{\infty} \leq \bar{K}, \\
		&\|\tilde{K}^u\|_{\infty} \leq \epsilon,  \|\tilde{K}^m\|_{\infty} \leq \epsilon ,\\
		&\|w(t)\| =\|\hat{u}(t)\|,\\
		& \|z(t)\| \leq (1+\bar{K}) \|\hat{m}(t)\| +\bar{K} \|\hat{u}(t)\|,\\
		&\|\hat{m}(t)\| \leq (1+\bar{L})\|z(t)\|+\bar{L} \|w(t)\|,\\
		&|\Gamma(t) |\leq \epsilon \bar{\Gamma}(\|w\|+\|z\|),
	\end{align*}
	where 
	\begin{align*}
		\bar{\Gamma}=1+\bar{L}.
	\end{align*}
    
    The key difference is $z^2(1,t)=\Gamma(t)^2\neq 0$. This leads to the terms $ \dfrac{1}{a_1}\mu e^k\epsilon^2 \bar{\Gamma}^2 V(t)$ in \eqref{eq:V_5 leq} as follows. There exist positive constants $h_1,h_2,\cdots,h_6$ and nonnegative, integrable function $g_1,g_2,\cdots,g_5$ such that 
	\begin{align}
		\dot{V}_{4}(t) \leq & h_{1} z^{2}(0, t)-\left[\lambda \delta-h_{2}\right] V_{4}(t)+h_{3} V_{5}(t)\nonumber \\
		&+g_{1}(t) V_{4}(t)+g_{2}(t), \nonumber \\
		\dot{V}_{5}(t) \leq &\mu e^k z^2(1,t)-\left[\mu-e^{k} h_{4} \tilde{r}^{2}(t)\right] z^{2}(0, t)+h_{5} V_{4}\nonumber\\
		&-\left[k \mu-h_{6}\right] V_{5}  +g_{3}(t) V_{4}+g_{4}(t) V_{5}+g_{5}(t)\nonumber
        \\
        \leq & \dfrac{1}{a_1}\mu e^k\epsilon^2 \bar{\Gamma}^2 V(t)-\left[\mu-e^{k} h_{4} \tilde{r}^{2}(t)\right] z^{2}(0, t)+h_{5} V_{4}\nonumber\\
		&-\left[k \mu-h_{6}\right] V_{5}  +g_{3}(t) V_{4}+g_{4}(t) V_{5}+g_{5}(t), \label{eq:V_5 leq}
	\end{align}
where 
\begin{align*}
	g_{1}(t)=&e^{2 \delta} \rho^{2}\|e_1(t)\|^{2}\|\varpi(t)\|^{2},\\
	g_2(t)=& \left(\bar{c}_{1}^{2}+4 e^{-\delta}\right)\|e_1(t)\|^{2}+\left(\bar{c}_{2}^{2}+4 e^{-\delta}\right)\|e_2(t)\|^{2} \nonumber\\
	& +3 \lambda e_1^{2}(0, t)+3 \lambda \bar{r}^{2} e_2^{2}(0, t),\\
	g_{3}(t)= & e^{\delta+k}\left\|\hat{K}_{t}^{u}(t)\right\|^{2}+2 e^{\delta+k}\left\|\hat{K}_{t}^{m}(t)\right\|^{2} A_{3}^{2}, \\
	g_{4}(t)= & 2 \rho^{2} e^{\delta+2 k}\left(A_{1}^{2}\|e_1(t)\|^{2}+A_{2}^{2}\|e_2(t)\|^{2}\right)\|\varpi(t)\|^{2} \nonumber\\
	& +2 e^{k}\left\|\hat{K}_{t}^{m}(t)\right\|^{2} A_{4}^{2},\\
	g_5(t)=&\lambda^{2} \bar{K}^{2} \bar{r}^{2} e^{k} e_2^{2}(0, t)+\lambda^{2} \bar{K}^{2} e^{k} e_1^{2}(0, t)\nonumber\\
	&+2 e^{k}\Big(A_{1}^{2} \bar{c}_{1}^{2}\|e_1(t)\|^{2}+A_{1}^{2} \bar{c}_{2}^{2}\|e_2(t)\|^{2}+A_{2}^{2} \bar{c}_{3}^{2}\|e_1(t)\|^{2}  \nonumber\\
	& +A_{2}^{2} \bar{c}_{4}^{2}\|e_2(t)\|^{2}\Big)+4 e^{-\delta}\Big(|e_1(t)\|^{2}+\|e_2(t)\|^{2}\Big),
\end{align*}
 where $A_i>0$, $i=1\cdots4$, $\delta \geq 1$. Thus, we obtain the following upper bound calculation

	\begin{equation}\label{eq:V6_dot1_1}
		\dot{V}(t) \leq-\left[d- \dfrac{1}{a_1}\mu e^k\epsilon^2 \bar{\Gamma}^2  \right] V(t)+g_6(t) V(t)+g_{7}(t),
	\end{equation}
	for positive constant $d$ and the nonnegative, integrable functions $g_{6}(t)$ and $g_7(t)$
	\begin{align}
		g_{6}(t) =&\mathop{\max}
		\left\{g_1(t)+ag_3(t),g_4(t)\right\}, \label{eq:l6}\\
		g_{7}(t) =& \dfrac{2\tilde{r}^2z(0,t)^2}{1+m(0,t)^2}+8b\tilde{r}^2e_2(0,t)^2+g_2(t)+ag_5(t)\nonumber\\
		&+b\dfrac{\tilde{r}^2z(0,t)^2}{1+m(0,t)^2}z(0,t)^2,\label{eq:l10}
	\end{align}
	where $a,b$ are positive constants.
	We introduce the positive constant
	\begin{equation*}
		\epsilon _0:=\dfrac{\sqrt{a_1(2d-1)}}{\sqrt{2\mu e^k \bar{\Gamma}^2}}.
	\end{equation*} 
	Thus, if we choose $\epsilon \in (0,\epsilon_0)$ we have $d- \dfrac{1}{a_1}\mu e^k\epsilon^2 \bar{\Gamma}^2 >{1}/{2}>0$.
	It then follows from Lemma 12 in \cite{anfinsen2017model} that
	$$
	V \in L^1 \cap L^{\infty},
	$$
	and hence
	$$
	\|w\|,\|z\| \in L^2 \cap L^{\infty} .
	$$
	Due to the invertibility of the backstepping transformation                                  
	$$
	\|\hat{u}\|,\|\hat{m}\| \in L^2 \cap L^{\infty} .
	$$
    From Lemma \ref{lemma:Identifier_bound}, $\|e_1\|,\|e_2\|\in L^2 \cap L^{\infty}$, it follows that 
    $$
	\|{u}\|,\|{m}\| \in L^2 \cap L^{\infty} .$$

	B. Pointwise-in-space boundedness and regulation
	\par The paper \cite{vazquez2011backstepping} proved that the system \eqref{eq:uv_def} is equivalent to the following system through an invertible backstepping transformation.
	\begin{align}\label{alpha_1}
		\psi_t(x, t)&= -\lambda \psi_x(x, t)+h_1(x) \zeta(0, t), \\
		\zeta_t(x, t)&=\mu \zeta_x(x, t), \\
		\psi(0, t)&= r \zeta(0, t), \\
		\zeta(1, t)
		&=U(t) -\int_{0}^{1}(G_1(\xi)u(\xi) -G_2(\xi)m(\xi))d\xi, \label{alpha_4}
	\end{align}
	for some bounded functions $h_1,G_1,G_2$ of the unknown parameters. Equation \eqref{alpha_1}-\eqref{alpha_4} can be explicitly be solved for $t>\lambda^{-1}+\mu^{-1}$ to yield 
	\begin{align}\label{eq:alpha_solution}
		\psi&(x, t)=r \zeta\left(1, t-\mu^{-1}-\lambda^{-1} x\right) \nonumber \\
		&+\lambda^{-1} \int_{0}^{x} h_1(\tau) \zeta\left(1, t-\mu^{-1}-\lambda^{-1}(x-\tau)\right) d \tau, \\
		\zeta&(x, t)=\zeta\left(1, t-\mu^{-1}(1-x)\right) . \label{eq:alpha_solution_2}
	\end{align}
	From \eqref{alpha_4}, the control law $U(t)$ and $\|u\|,\|m\|,\|\hat{u}\|,\|\hat{m}\| \in$ $L^2 \cap L^{\infty}$, it follows that $\zeta(1, \cdot) \in$ $L^2 \cap L^{\infty}$. Since $\zeta$ and $\psi$ are simple, cascaded transport equations, this implies
	\begin{align*}
		\|\psi\|_{\infty},\|\zeta\|_{\infty} \in L^2 \cap L^{\infty}, \quad\|\psi\|_{\infty},\|\zeta\|_{\infty} \rightarrow 0.
	\end{align*}
	With the invertibility of the transformation, then yields
	\begin{equation*}
		\|u\|_{\infty},\|m\|_{\infty} \in L^2 \cap L^{\infty}, \quad\|u\|_{\infty},\|m\|_{\infty} \rightarrow 0.
	\end{equation*}

	From the structure of the identifier \eqref{eq:uv_def}, we will also have  $\hat{u}(x, \cdot), \hat{m}(x, \cdot) \in L^{2} \cap L^{\infty}$ , and hence
	\begin{align*}
		\|\hat{u}\|_{\infty}, \|\hat{m}\|_{\infty} \in L^{2} \cap L^{\infty}, 
		\quad \|\hat{u}\|_{\infty},\|\hat{m}\|_{\infty} \mapsto 0.
	\end{align*}

	C. Global stability
	\par Here, we will prove the global stability of the system, specifically by proving \eqref{eq:S(t)}, and thus we introduce the following function
	\begin{align*}
		S(t) :=& \|u\|^2+\|m\|^2+\|\hat{u}\|^2+\|\hat{m}\|^2+\|\tilde{c}_{1}\|^2+\|\tilde{c}_{2}\|^2\nonumber\\
		&+\|\tilde{c}_{3}\|^2+\|\tilde{c}_{4}\|^2+\tilde{r}^2.
	\end{align*}
	The goal of the proof is to demonstrate the existence of a function $\theta $ such that the following inequality holds. 
	\begin{equation*}
		S(t) \leq \theta(S(0)), t\geq 0.
	\end{equation*}
	We will reuse the Lyapunov function from Appendix A to show that the system's state remains stable over time.
	\begin{align*}
		V_1(t) =& V_2(t)+\gamma_1^{-1}\|\tilde{c}_{1}\|^2+\gamma_2^{-1}\|\tilde{c}_{2}\|^2+\gamma_3^{-1}\|\tilde{c}_{3}\|^2 \nonumber\\
		&+\gamma_4^{-1}\|\tilde{c}_{4}\|^2+\dfrac{\lambda}{2\gamma_5}\tilde{r}^2(t),
	\end{align*}
	where 
	\begin{equation*}
		V_2(t) = \int_{0}^{1}e^{-\gamma x}e_1^2(x,t)dx + \int_{0}^{1}e^{\gamma x}e_2^2(x,t)dx,
	\end{equation*}
	leads to the following upper bound:
	\begin{align}
		\dot{V}_{1}(t) \leq & -\lambda e^{-\gamma} e_1^{2}(1, t)-\lambda e_1^{2}(0, t) m^{2}(0, t)-\lambda \gamma e^{-\gamma}\|e_1(t)\|^{2} \nonumber\\
		& -2 \rho e^{-\gamma}\|e_1(t)\|^{2}\|\varpi(t)\|^{2}-\mu e_2^{2}(0, t) \nonumber\\
		& -\mu \gamma\|e_2(t)\|^{2}-2 \rho e^{\gamma}\|e_2(t)\|^{2}\|\varpi(t)\|^{2}, \label{eq:dotV_1}
	\end{align}
	which shows that ${V}_1(t)$ is non-increasing and hence bounded. Thus implies that the ${V}_1(t)<V_1(0)$ and limit $\lim_{t\to \infty}V_1(t)=V_{1,\infty}$ exists. By integrating \eqref{eq:dotV_1} from zero to infinity, we obtain the following upper bound:
	\begin{align*}
		&\lambda e^{-\gamma} \int_{0}^{\infty}e_1^{2}(1, \tau)d\tau+\lambda \int_{0}^{\infty}e_1^{2}(0, \tau) m^{2}(0, \tau)d\tau \nonumber\\
		&+\lambda \gamma e^{-\gamma}\int_{0}^{\infty}\|e_1(\tau)\|^{2}d\tau \nonumber\\
		& +2 \rho e^{-\gamma}\int_{0}^{\infty}\|e_1(\tau)\|^{2}\|\varpi(\tau)\|^{2}d\tau+\mu \int_{0}^{\infty}e_2^{2}(0, \tau)d\tau \nonumber\\
		& +\mu \gamma \int_{0}^{\infty}\|e_2(\tau)\|^{2}d\tau+2 \rho e^{\gamma}\int_{0}^{\infty}\|e_2(\tau)\|^{2}\|\varpi(\tau)\|^{2}d\tau \nonumber\\
		&\leq V_1(0).
	\end{align*}
	
	From \eqref{eq:l6} and \eqref{eq:l10}, it can be concluded that there are constants $\theta_1 >0 $ and  $\theta_2>1$ such that
	\begin{align}\label{eq:l6_1}
		\|g_6\|_1 \leq& \theta_1 V_1(0), \\
		\|g_{7}\|_1 \leq& \theta_2 V_1(0). \label{eq:l10_1}
	\end{align}
	Recalling \eqref{eq:V6_dot1_1}, we have that 
	\begin{align*}
		\dot{V}(t) \leq -\frac{1}{2}V(t) +g_6(t)V(t) +g_{7}(t).
	\end{align*}
	We also have from Lemma B.6 in \cite{krstic1995nonlinear} that 
	\begin{equation}\label{eq:V leq}
		V(t)\leq (e^{-\frac{1}{2}t}V(0)+\|g_{7}\|_1)e^{\|g_6\|_1}.
	\end{equation}
	\par We then introduce the function
	\begin{equation*}
		V_6(t):=V_1(t)+V(t).
	\end{equation*}
	Noticing that
	\begin{equation}\label{eq:V1 leq}
		V_1(t)\leq 	V_1(0)\leq \theta_2 V_1(0)e^{\theta_1 V_1(0)},		
	\end{equation}
	we achieve from \eqref{eq:V leq}, \eqref{eq:V1 leq}, \eqref{eq:l6_1} and \eqref{eq:l10_1} the following
	\begin{align*}
		V_6(t)= & V(t)+V_1(t)\nonumber \\
		\leq &  (e^{-\frac{1}{2}t}V(0)+\|g_{7}\|_1)e^{\|g_6\|_1} +\theta_2 V_1(0)e^{\theta_1 V_1(0)}	\nonumber	\\
		\leq & (\theta_2 V(0)+\theta_2V_1(0))e^{\|g_6\|_1} +\theta_2 V_1(0)e^{\theta_1 V_1(0)} \nonumber\\
		\leq &2\theta_2 V_6(0) e^{\theta_1 V_6(0)}.
	\end{align*}
	This Lyapunov functional can be represented by an equivalent norm, and the bounds of this equivalent norm are determined by two positive constants $k_1 > 0$ and $k_2 > 0$.
	\begin{equation*}
		k_1 S(t)\leq V_6(t) \leq k_2 S(t).
	\end{equation*}
	So we have
	\begin{equation*}
		S(t)\leq 2\dfrac{k_2}{k_1}\theta_2 S(0) e^{\theta_1 k_2S(0)}.
	\end{equation*}
\end{proof}

\section{Simulations}\label{sec_Simulation}
This section will present and analyze the performance of the proposed NO-based adaptive controllers for two PDE models: (i) a general 2$\times$2 hyperbolic system \eqref{eq:uv_def} (ii) the ARZ PDE system. Through these examples, we will demonstrate the effectiveness of the NO-based adaptive control design.
\subsection{Simulation of the Coupled 2$\times$2 Hyperbolic System}

\textit{A. Simulation configuration} 

The coefficients are defined as $c_1 (x)=\cos (\sigma_1 \cos ^ {-1} (x)) $, $c_2 (x)=\cos (\sigma_2 \cos ^ {-1} (x))$, $c_3 (x)=\sin(1-\sigma_3 x)+1 $ and $c_4 (x)=\cos (\sigma_4 x)$ with the shape parameters $\sigma_1$, $\sigma_2 $, $\sigma_3 $ and  $\sigma_4$. Although this paper uses specific Chebyshev polynomial forms, sine functions, and cosine functions, our framework is applicable to any compact set of continuous functions. \textcolor{black}{We use the first order finite difference scheme to solve PDEs, where the time step $dt=0.005s$, the spatial step $dx=0.05m $}, the total time $T=10s$, and the length $L=1m$. The initial conditions are $u_0=\sin(2\pi x)$, $m_0=x$. 

\par\textit{ B. Dataset generation and NO training }

We choose 10 sets of  $(c_1,c_2,c_3,c_4,r)$ randomly sampled with $\sigma_1 \sim U (3.5,4.5) $, $\sigma_2 \sim U (0.8,1) $, $\sigma_3 \sim U(20,21)$, $\sigma_4 \sim U(10,11)$ and $r \sim U (2,5) $, where $U(a,b)$ denotes the uniform distribution over the interval $[a,b]$. We simulate trajectories using adaptive control methods and calculate the corresponding kernel functions using numerical solvers.
\textcolor{black}{In practice, during DeepONet design, we choose a sufficiently expressive architecture to ensure accurate approximation of the kernel functions as shown in Figure \ref{structure}.}
Each trajectory was sampled at $1000$ time points, resulting in a dataset of $10000$ sets of $(\hat{c}_1,\hat{c}_2,\hat{c}_3,\hat{c}_4,\hat{r}, \breve{K}^u,\breve{K}^m)$ for training. 
We trained the model on an Nvidia RTX 4060 Ti GPU. After 600 epochs of training, the $L^2$ error of neural operator $\mathcal{\hat{K}}$ reached $1.2 \times 10 ^ {-3} $, and the test error was $1.1 \times 10^{-3}$, as shown in Figure \ref{loss}. 
\par \textit{C. Computation time comparison} 

Table \ref {tab1} provides a comparison of the computation time of solving kernels at each time step using the numerical solver and the trained DeepONet model. We can see that as the sampling accuracy improves, the acceleration obtained by the NO becomes substantial. 
We computed the average absolute error $\int_{\xi}^{1}\int_{0}^{x}(|\breve{K}^u-\hat{K}^u|+|\breve{K}^m-\hat{K}^m|)d\xi dx$ between numerical solutions and NO solutions with different step sizes. Although the error slightly increases with the decrease of step size, they are quite small at all step sizes. Because adaptive control requires calculating control gain at every step of updating parameter estimation, quickly solving the kernel function can help improve the performance of adaptive control.

\par \textit{D. Simulation results }

We test the performance of the closed-loop system stability with test values $(\sigma_1 =4,\sigma_2 =0.9,\sigma_3 =20.1,\sigma_4 =10.1,r=4)$ unseen during training. Figure \ref{kernel} shows the kernels $\breve{K}^u, \breve{K}^m$ calculated by the numerical solver, the kernels $ \hat{K}^u, \hat{K}^m$ learned by DeepONet, and the error between them. 
In Figure \ref{uv}, we demonstrate closed-loop stability with the NO approximated kernel function for the control feedback law. 
Figure \ref{kernel} and Figure \ref{uv} confirm that the kernels $\hat{K}^u, \hat{K}^m$ approximated by NO can effectively simulate the backstepping kernels $\breve{K}^u, \breve{K}^m$ while maintaining the stability of the system. All estimated parameters $\hat{c}_i$ and $\hat{r}$ are shown in Figure \ref{coefficient ci}. 
We emphasize that although in adaptive control the system parameters $\hat{c}_1,\hat{c}_2,\hat{c}_3,\hat{c}_4$ and $\hat{r}$ may not precisely converge to their true values, this does not affect the control performance. This phenomenon is not a problem but rather a characteristic of adaptive control. The goal of adaptive control is not perfect system identification, but rather the estimation of parameters that ensure system stability.

\par \textit{E. Comparative experiment with RL}

We will evaluate the performance of NO-based adaptive control method and RL method for stabilization results under different initial conditions. \textcolor{black}{In this work, we implement the Proximal Policy Optimization (PPO) algorithm. The PDE state is discretized and used as the observation input to a neural network policy. The output of the policy network determines the boundary control action at each time step. The PPO algorithm is trained to minimize a cumulative cost function, which achieves regulation of the traffic states to a spatially uniform density and velocity. We use the standard clipped surrogate objective for policy updates.} We choose the initial condition of state $u$ is a sine function 
\begin{equation}
\label{initial_w0}
u_0=\sin(\omega_0 \pi x),
\end{equation}
where $\omega_0$ is the frequency of a sine wave. To evaluate the performance of these two methods, we train DeepONet and RL at the same frequency $\omega_0=2 $, ensuring all other parameters remained consistent with those in Figure \ref{kernel}. 
In the testing phase, we will use sine initial conditions of different frequencies  $\omega_0=2,10 $ to verify the model stability of NO-based adaptive control and RL. Figure \ref{Comparison with RL} shows the stabilization results of the RL and NO control under different initial conditions. 
The comparative experiments highlight a significant advantage of the NO-based adaptive control method, which consistently demonstrates robustness across different initial conditions. Specifically, the NO-based adaptive control method maintains system stability without requiring retraining even when the initial conditions are changed. This characteristic underscores its adaptability in dynamic environments.
In contrast, the RL method shows a significant dependency on initial conditions. Although it performs well under specific conditions encountered during training, it is unstable when faced with unforeseen initial conditions($\omega_0=10$).In real-world scenarios where initial conditions are often variable and unpredictable, DeepONet ensures stability and adaptability without the need for retraining. In summary, this demonstrates DeepONet’s potential for more reliable applications in adaptive control systems, where maintaining performance across diverse conditions is crucial.

\begin{table}[t]

\centering
\resizebox{\columnwidth}{!}{%
\begin{tabular}{lcccc}
\hline
\textbf{\begin{tabular}[c]{@{}l@{}}Spatial Step  Size \end{tabular}} & \multicolumn{1}{l}{\textbf{\begin{tabular}[c]{@{}l@{}}Numerical solver (s)  \end{tabular}}} & \multicolumn{1}{l}{\textbf{\begin{tabular}[c]{@{}l@{}}NO(s)  \end{tabular}}} & \multicolumn{1}{l}{\textbf{Speedup}  } & \multicolumn{1}{l}{\textbf{Error}  } \\ \hline
dx=$0.01$                                                                     & $8.221\times 10^{-3}$                                                                                                             & $4.38\times 10^{-3}$                                                                                                                &  $2\times$    &  $0.024$    \\
dx=$0.05$                                                                     & $2.432\times 10^{-2}$                                                                                                             & $4.42\times 10^{-3}$                                                                                                                &  $58\times$       &  $0.031$    \\
dx=$0.001$                                                                    & $8.701\times 10^{-1}$                                                                                                               & $4.513\times 10^{-3}$                                                                                                                &  $192\times$        &  $0.037$    \\

dx=$0.0005$                                                                    & $3.191$                                                                                                                & $4.631\times 10^{-3}$                                                                                                                & $689\times$        &  $0.045$    \\
\hline
\end{tabular}%

} 
\caption{Comparison of computation time of kernels $\breve{K}^u$ and $\breve{K}^m$. }
\label{tab1}
\end{table}

\definecolor{lighter-green}{rgb}{0, 0.6, 0.00392156862}

\begin{figure}[ht]
\centering
\includegraphics[width=7cm]{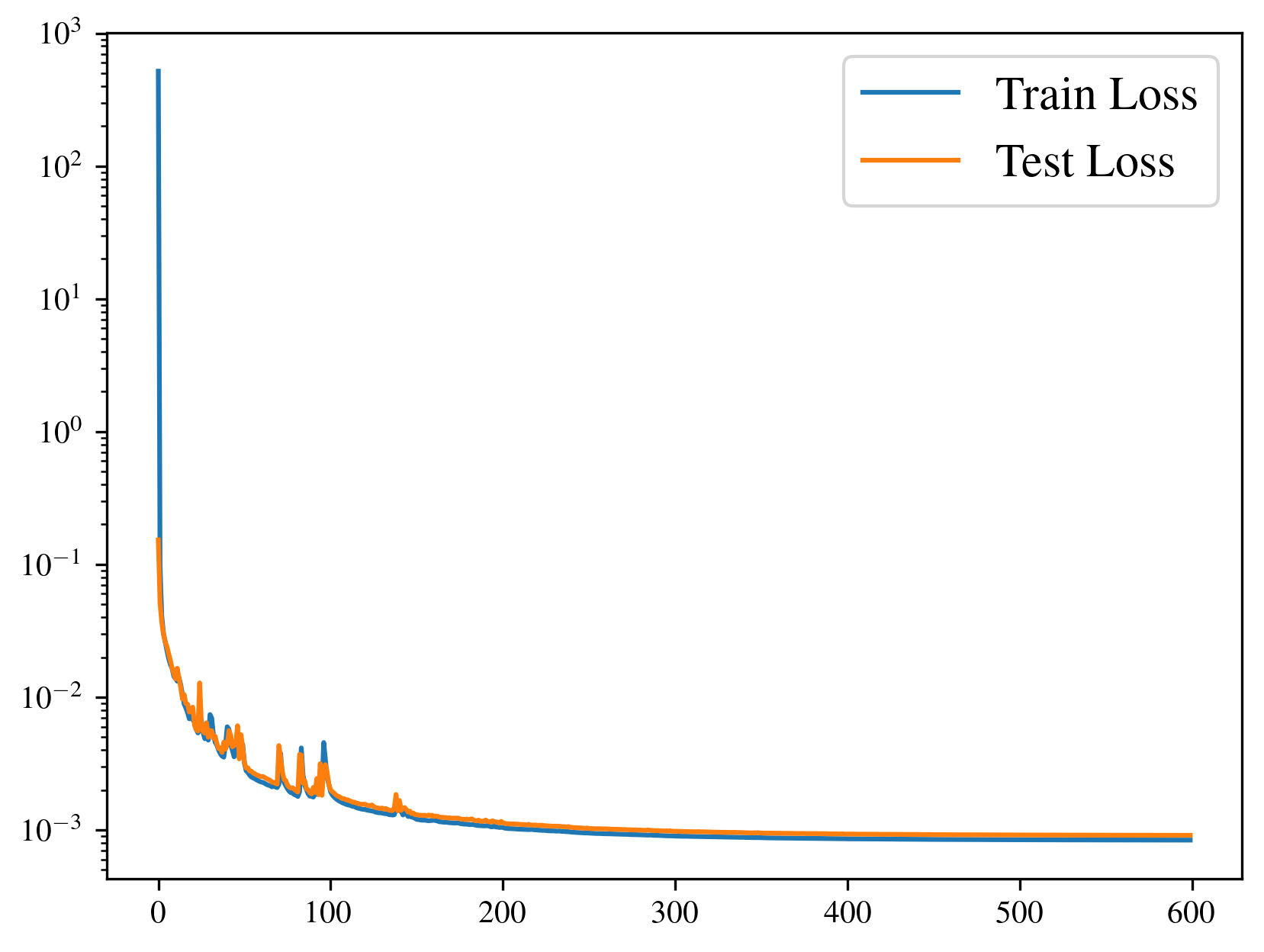} 
\caption{The train and test loss for neural operator $\mathcal{\hat{K}}$}
\label{loss}
\end{figure}

\begin{figure*}[ht]
\centering
\includegraphics[width=15cm]{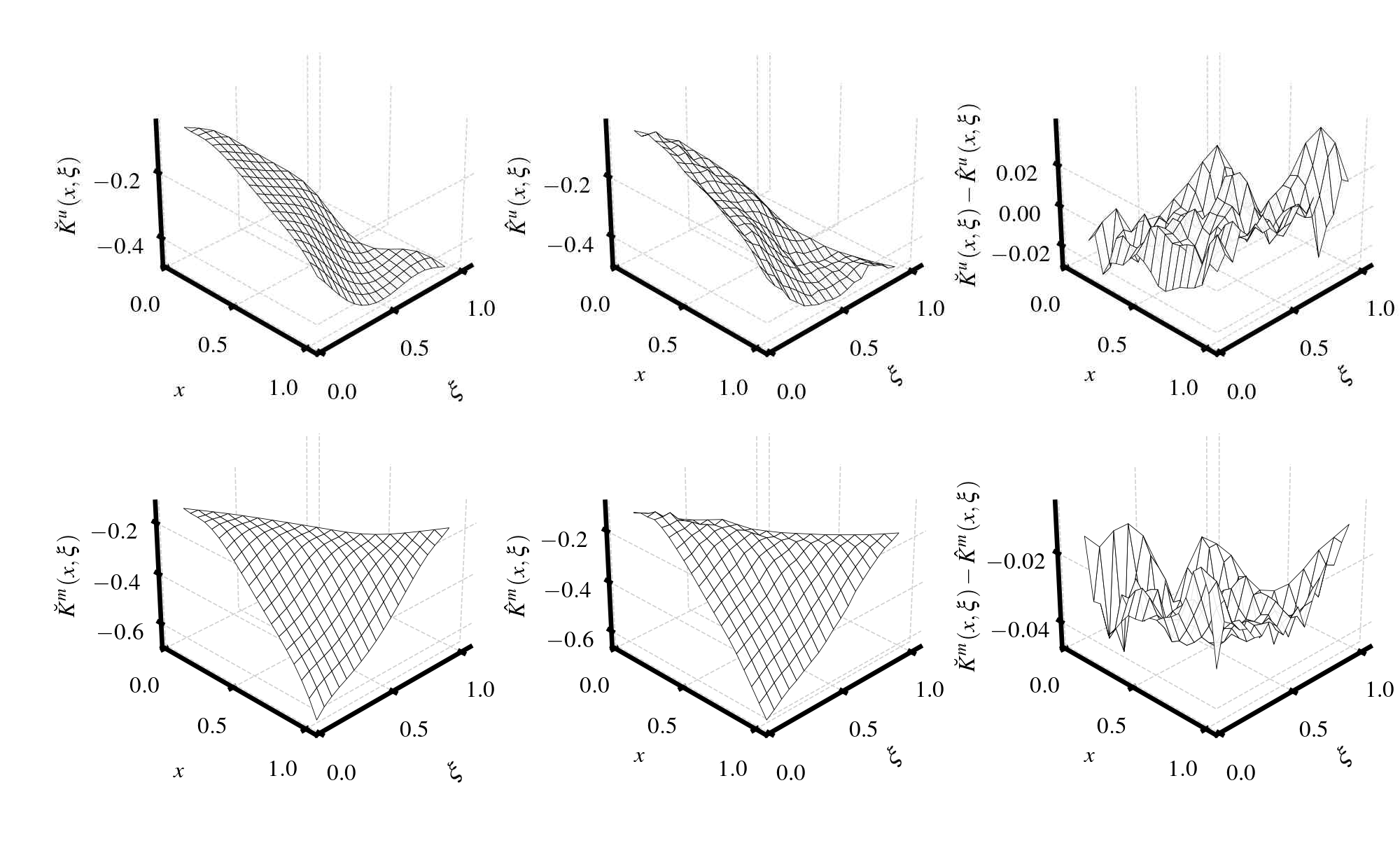}
\caption{The first row shows the exact estimated kernels $\breve{K}^u$ and $\breve{K}^m$. The second row shows the NO estimated kernels $\hat{K}^u$ and $\hat{K}^m$. The last row shows the kernel errors $\breve{K}^u-\hat{K}^u$ and $\breve{K}^m-\hat{K}^m$. All kernels are plotted at the final time $T$.}
\label{kernel}
\end{figure*}

\begin{figure*}
\centering
\includegraphics[width=18cm]{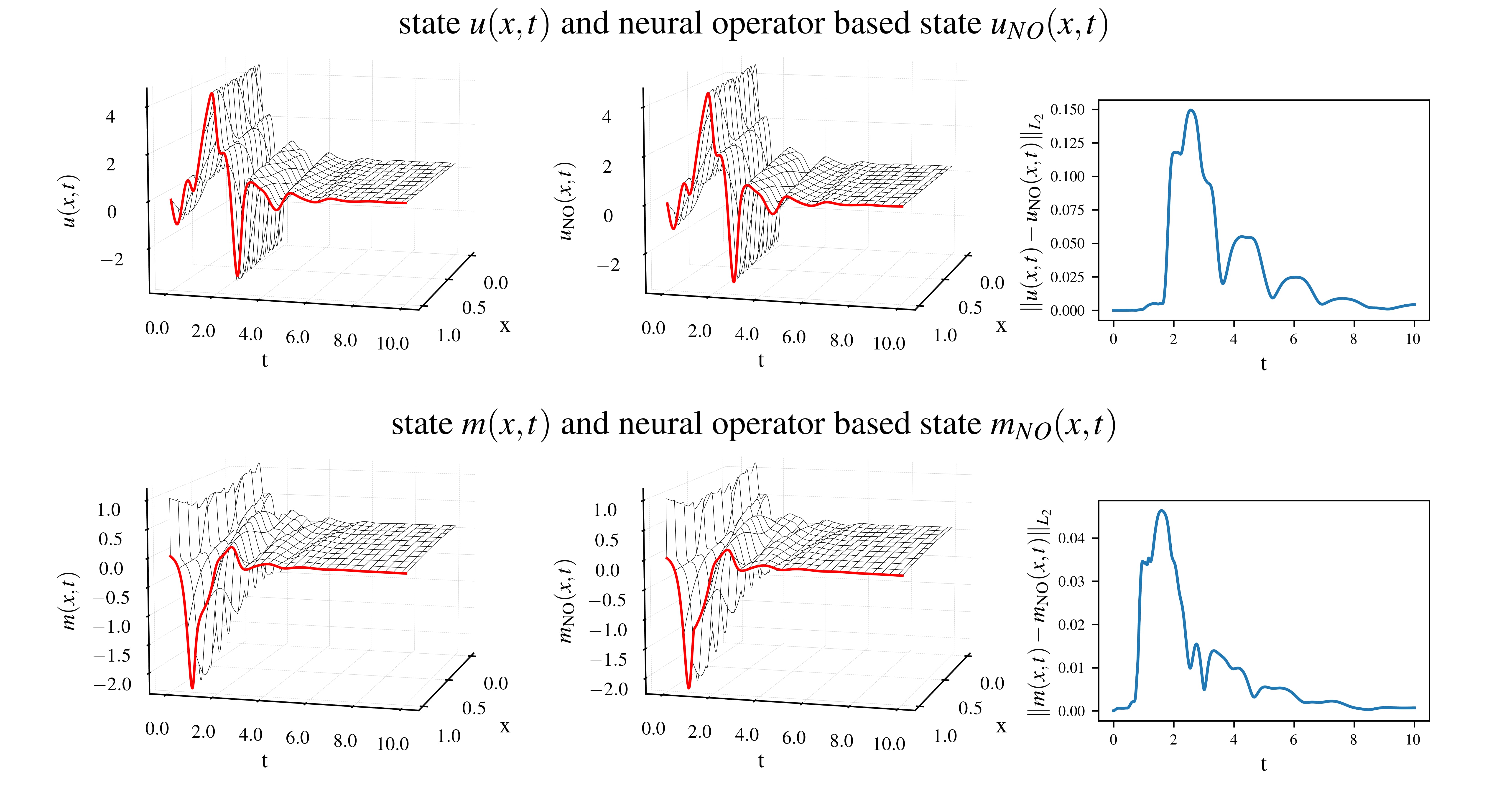}
\caption{Simulation of the close-loop system with feedback controller \eqref{U_exact} and \eqref{eq:U_NO}. The left columns of the first and second rows show close-loop system states $u(x,t)$, $m(x,t)$ with the analyzed kernels $\breve{K}^u$ and $\breve{K}^m$. The middle columns of the first and second rows show close-loop system states $u_{NO}(x,t)$, $m_{NO}(x,t)$ with the approximated kernels $\hat{K}^u$ and $\hat{K}^m$. The right columns of the first and second rows show the errors between  $u(x,t)$ and  $u_{NO}(x,t)$, and between $m(x,t)$ and  $m_{NO}(x,t)$, respectively. 
}
\label{uv}
\end{figure*} 

\begin{figure*}[ht]
\centering
\subfigure[$\hat{c}_1$]{
\includegraphics[width=0.3\textwidth]{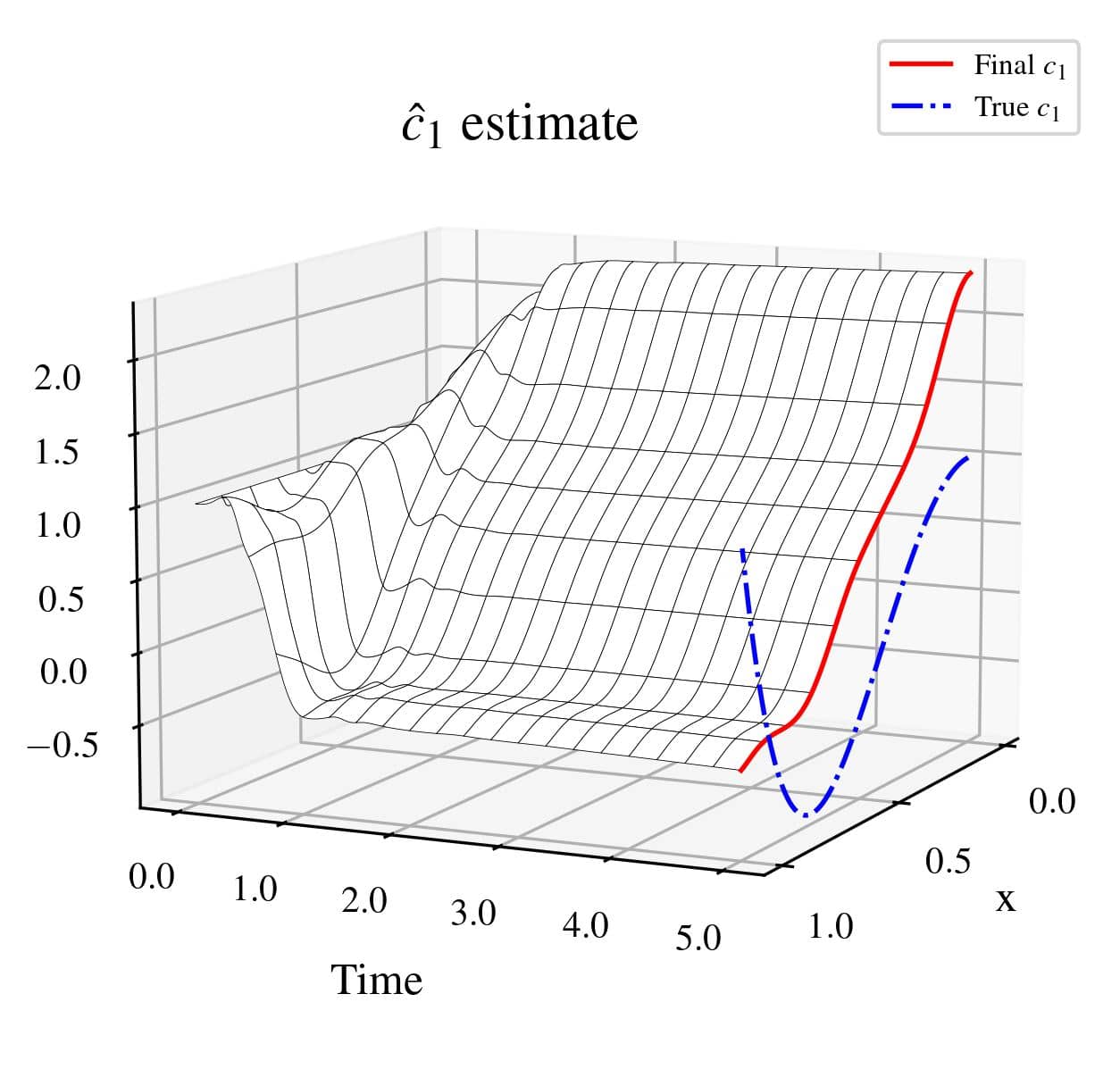} 
}
\subfigure[$\hat{c}_2$]{
\includegraphics[width=0.3\textwidth]{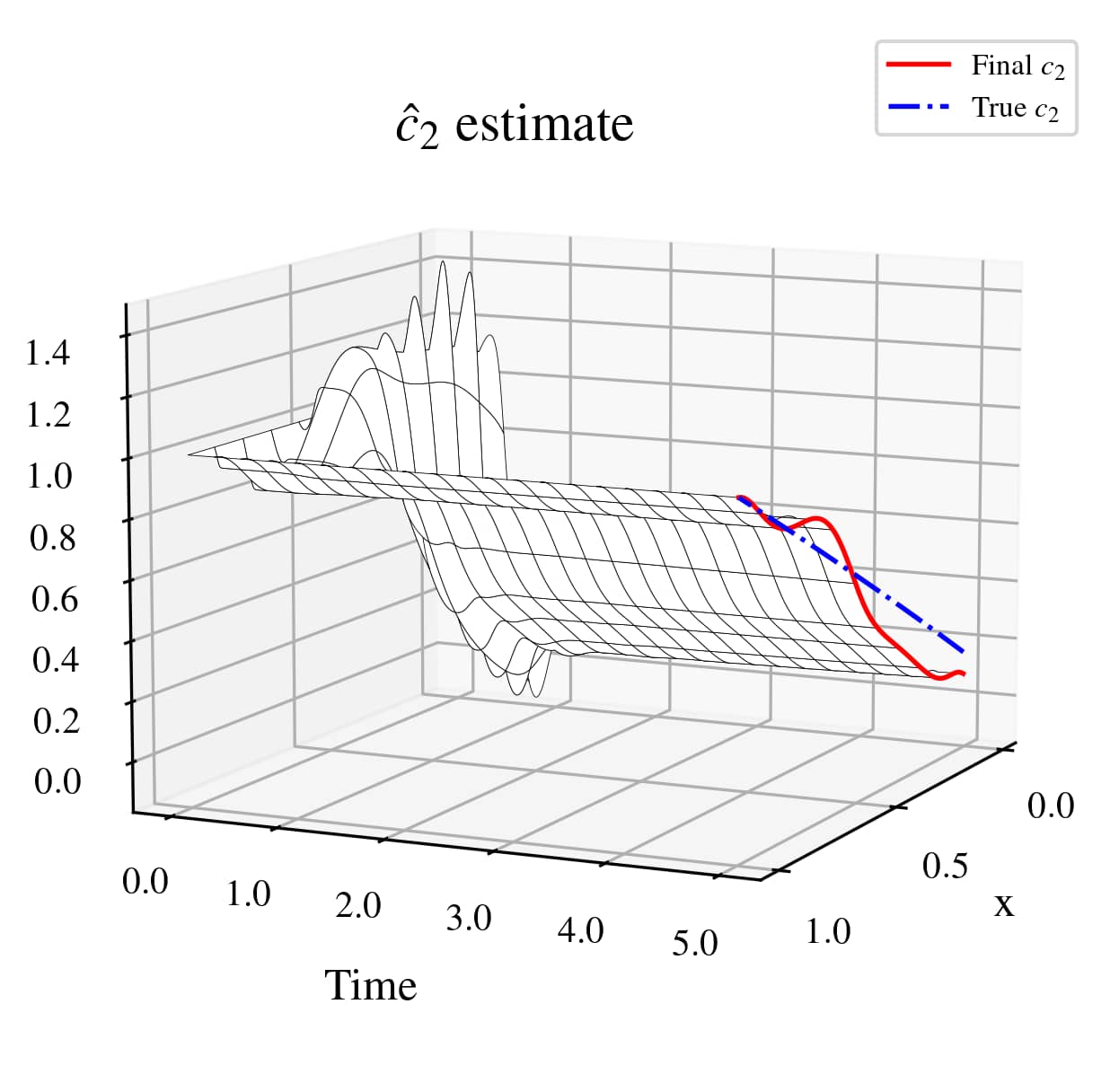}
}
\subfigure[$\hat{c}_3$]{
\includegraphics[width=0.3\textwidth]{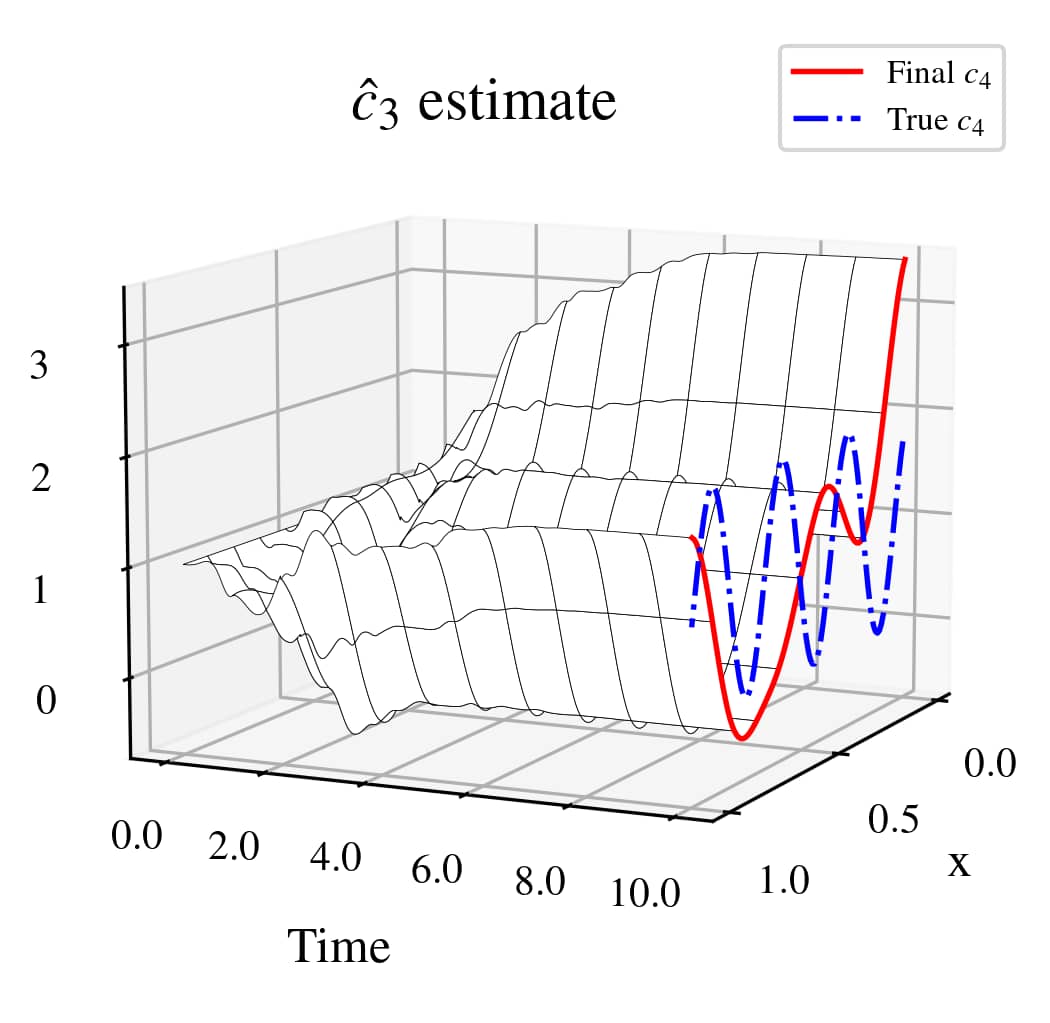}
}
\subfigure[$\hat{c}_4$]{
\includegraphics[width=0.3\textwidth]{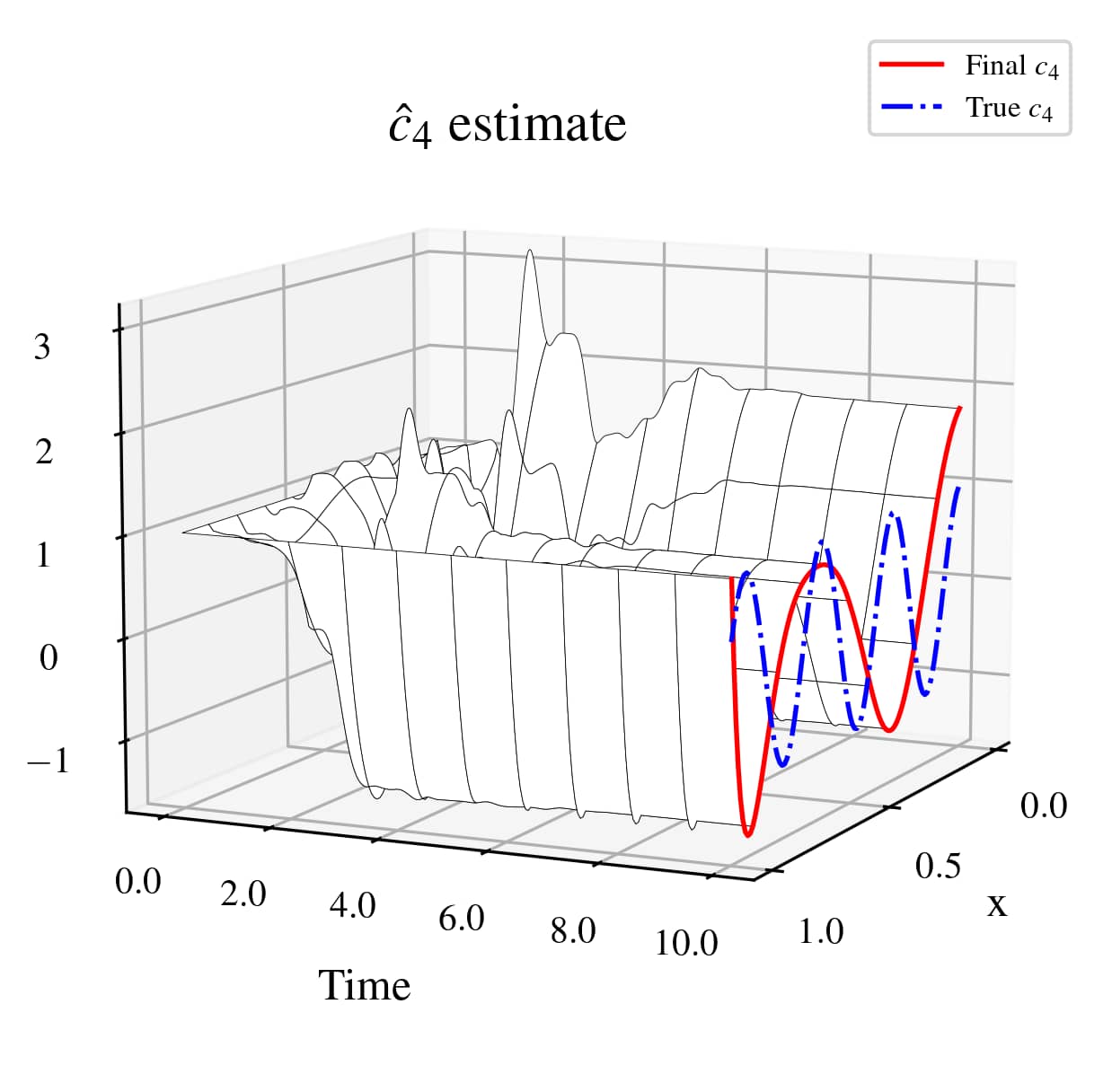}
}
\subfigure[$\hat{r}$]{
\includegraphics[width=0.28\textwidth]{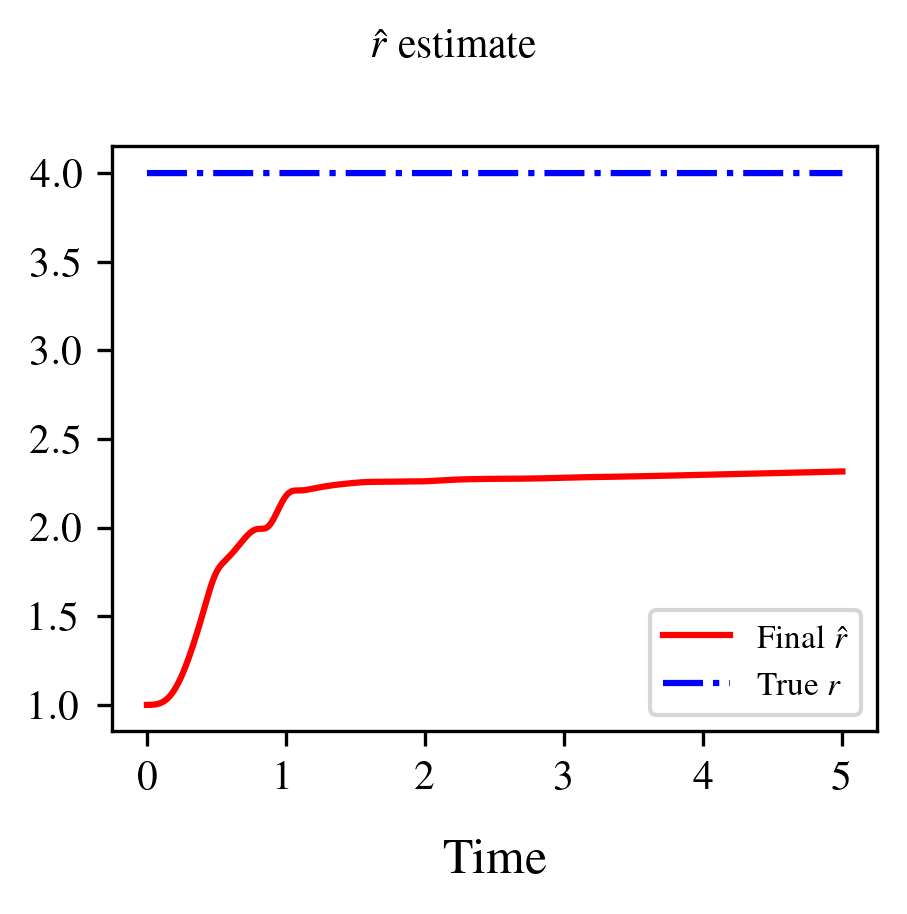}
}
\caption{$\hat{c}_1$, $\hat{c}_2$, $\hat{c}_3$, $\hat{c}_4$ and $\hat{r}$ estimates using NO approximated kernels. The blue dashed lines represent the true parameters, and the red solid lines represent the estimated parameters.}
\label{coefficient ci}
\end{figure*}

\begin{figure*}[ht]
\centering
\subfigure[$u_{NO}(x, t)$ and $m_{NO}(x, t)$ for $\omega_0=2$]{
\includegraphics[width=0.48\textwidth]{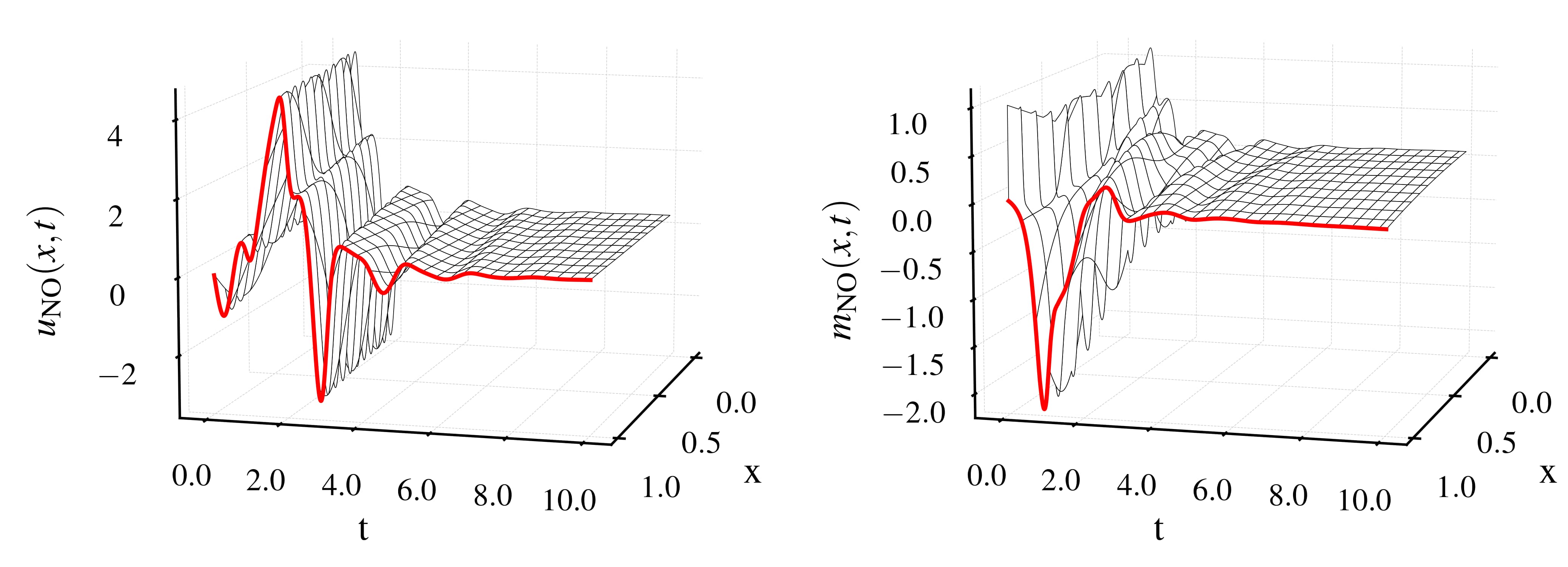} 
}
\subfigure[$u_{NO}(x, t)$ and $m_{NO}(x, t)$ for $\omega_0=10$]{
\includegraphics[width=0.48\textwidth]{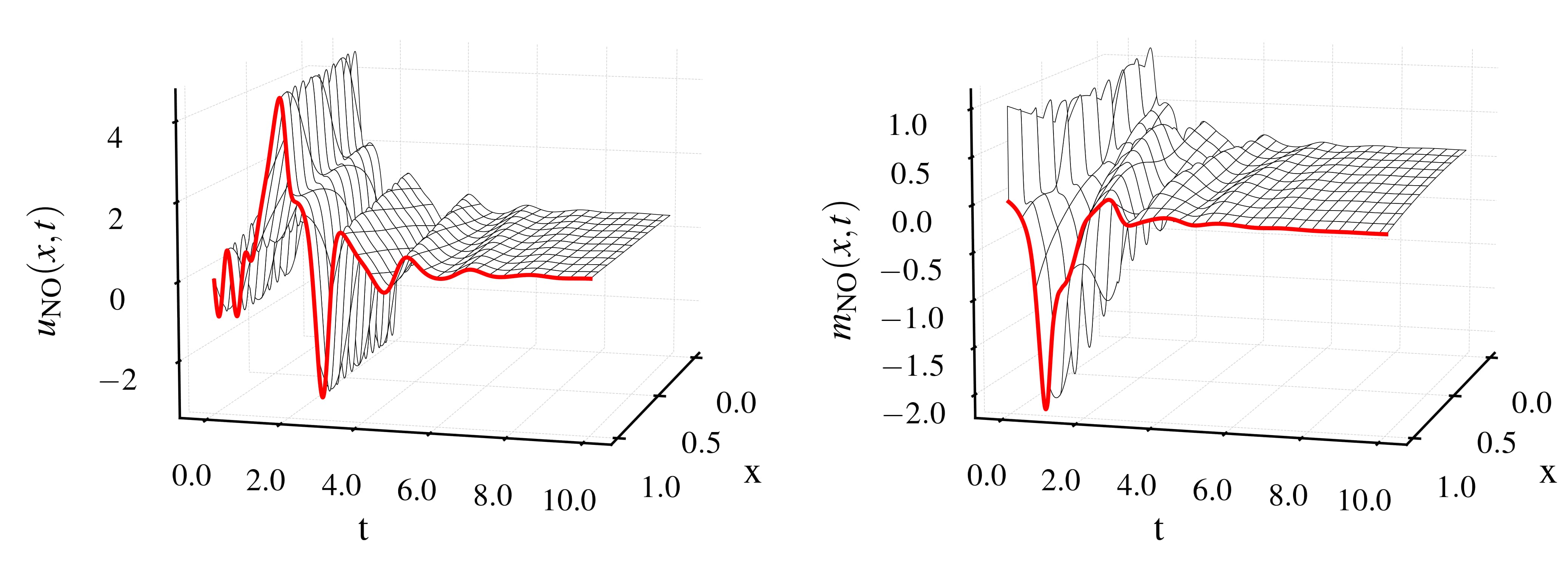}
}
\subfigure[$u_{RL}(x, t)$ and $m_{RL}(x, t)$ for $\omega_0=2$]{
\includegraphics[width=0.48\textwidth]{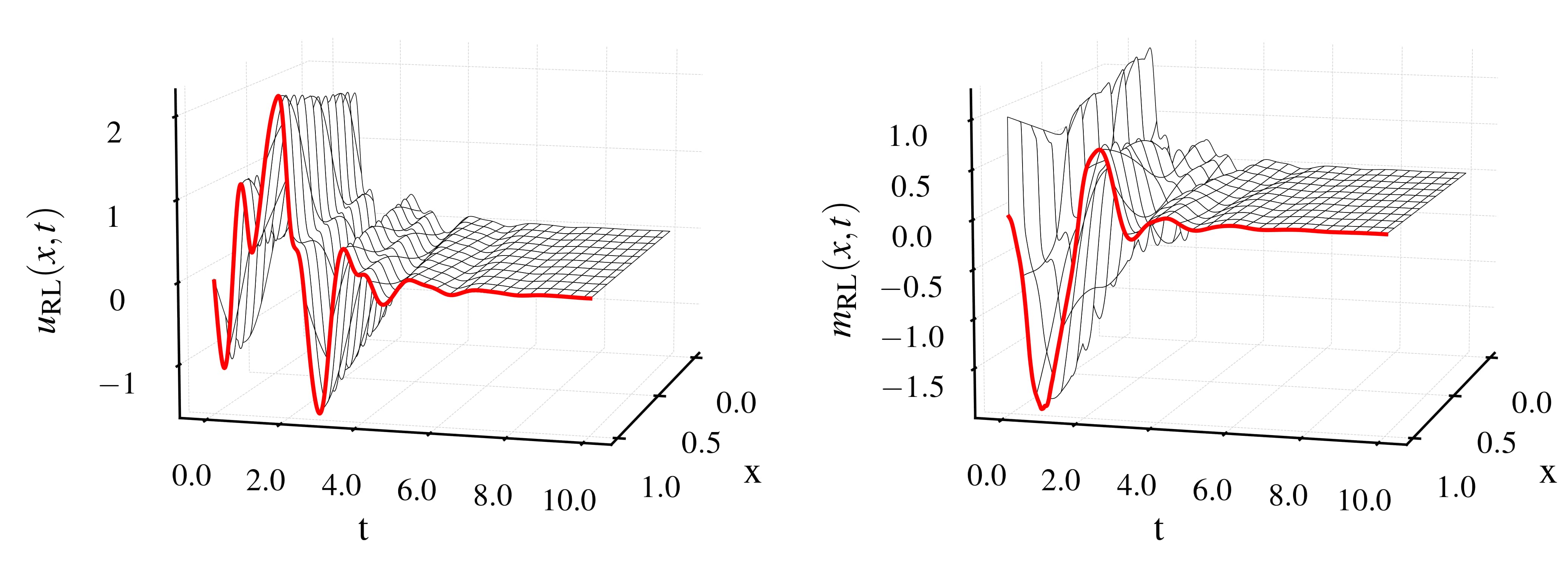}
}
\subfigure[$u_{RL}(x, t)$ and $m_{RL}(x, t)$ for $\omega_0=10$]{
\includegraphics[width=0.48\textwidth]{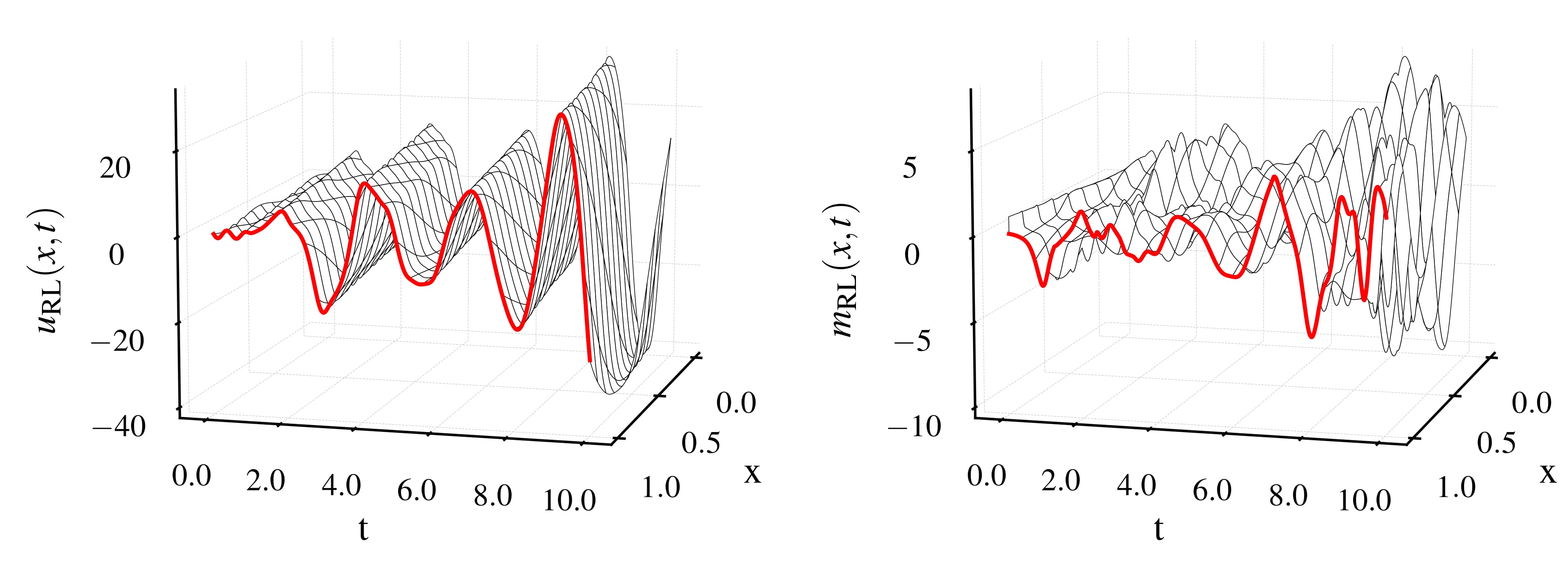}
}
\caption{Comparison of system stability between NO-based adaptive control and RL under different initial conditions \eqref{initial_w0}. (a) State evolution under NO-based adaptive control when the parameter of initial condition \eqref{initial_w0} is $\omega_0=2$. (b) State evolution under NO-based adaptive control when the parameter of initial condition \eqref{initial_w0} is $\omega_0=10$. (c) State evolution under RL when the parameter of initial condition \eqref{initial_w0} is $\omega_0=2$. (d) State evolution under RL when the parameter of initial condition \eqref{initial_w0} is $\omega_0=10$.}
\label{Comparison with RL}
\end{figure*}


\subsection{Application Simulation of the ARZ Traffic System}
\par \textit{A. NO-based adaptive controller} 
\par Following the steps in the section \ref{sec_Nominal}, we can obtain the adaptive controller for ARZ traffic system \eqref{eq:ARZ_1} as follows
\begin{align}\label{U_ARZ}
	U(t)=&\int_{0}^{1}\breve{K}^{u_1}(1,\xi,t){u}_1(\xi,t)d\xi + \int_{0}^{1}\breve{K}^{m_1}(1,\xi,t){m}_1(\xi,t)d\xi ,
\end{align}
with the parameter update law
\begin{equation}
	\label{eq:lawc}
	\hat{c}_{t}(x,t)  =\operatorname{Proj}_{\bar{c}}\left\{\gamma_{3}  e^{\gamma x}\varepsilon_1(x, t){{u}_1}(x, t) , \hat{c}(x,t)\right\}, 
\end{equation}
where 
\begin{equation*}
	\epsilon_1 (x,t)= m_1(x,t) - \hat{m}_1(x,t),
\end{equation*}
and the kernels satisfy the following kernel functions
\begin{align*}
	(\gamma p^* - {v}^*) \breve{K}_{x}^{u_1}(x, \xi,t)  =&{v}^* \breve{K}_{\xi}^{u_1}(x, \xi,t)\nonumber \\
	&+\hat{c}(x,t) \breve{K}^{m_1}(x, \xi,t), \\
	(\gamma p^* - {v}^*) \breve{K}_{x}^{m_1}(x, \xi,t)  =&-(\gamma p^* - {v}^*) \breve{K}_{\xi}^{m_1}(x, \xi,t), \\
	\breve{K}^{u_1}(x, x,t)  =&-\frac{\hat{c}(x,t)}{\gamma p^*}, \\
	\breve{K}^{m_1}(x, 0,t)  =&\dfrac{{v}^*}{\gamma p^*-{v}^*}r\breve{K}^{u_1}(x, 0,t).
\end{align*}
According to the approximation of NO in Theorem \ref{th:NO}, we get the NO-based adaptive controller 
\begin{align}\label{U_ARZ_NO}
	U(t)=&\int_{0}^{1}\hat{K}^{u_1}(1,\xi,t){u}_1(\xi,t)d\xi + \int_{0}^{1}\hat{K}^{m_1}(1,\xi,t){m}_1(\xi,t)d\xi.
\end{align}
\begin{table}[t]
\centering
\resizebox{\columnwidth}{!}{%
\begin{tabular}{lcccc}
\hline
\textbf{Method} 
& \textbf{\begin{tabular}[c]{@{}c@{}}Average \\ Computation Time (s)\end{tabular}} 
& \multicolumn{2}{c}{\textbf{MSE \%}} \\
\cline{3-4}
& & \textbf{Density $\rho$} & \textbf{ Velocity $v$} \\
\hline
Nominal Adaptive Controller      & $1.51$    & $0$       & $0$ \\
NO-based Adaptive Controller     & $0.043$   & $0.021$   & $0.011$ \\
\hline
\end{tabular}%
}
\caption{\textcolor{black}{Computation time and mean square errors (MSE) of density and velocity for the nominal and NO-based adaptive controllers in traffic control.}}
\label{tab2}
\end{table}

\par \textit{B. Simulation results }
\par Then, we analyze the performance of the proposed NO-based adaptive control law for the ARZ traffic PDE system through simulations on a L=600m road over T=300s. The parameters are set as follows: free-flow velocity \(v_m = 40\) m/s, maximum density \(\rho_m = 160\) veh/km, equilibrium density \(\rho^* = 120\) veh/km, driver reaction time \(\tau = 60\) s. Let $\gamma=1$. Initial conditions are sinusoidal inputs \(\rho(x,0) = \rho^* + 0.1\sin(\frac{3\pi x}{L})\rho^*\) and \(v(x,0) = v^* - 0.01\sin(\frac{3\pi x}{L})v^*\) to mimic stop-and-go traffic. \textcolor{black}{Recent advances in traffic sensing technologies, such as connected vehicles (CVs), loop detectors, and roadside sensors, provide increasingly dense and accurate measurements.}
To generate a sufficient dataset for training, we use 10 different \(c(x)\) functions with \(\tau \in U[50,70]\) and simulate the resulting PDEs under the adaptive controller for \(T=300\) seconds. We sub-sample each \((c, \hat{K}^{u_1}, \hat{K}^{m_1})\) pair every 0.1 seconds, resulting in a total of 30,000 distinct \((c, \hat{K}^{u_1}, \hat{K}^{m_1})\) pairs for training the NO. 
Using the trained NO, we simulate with the same parameters. Figure \ref{ARZ_openloop} shows the ARZ system is open-loop unstable. Figures \ref{rho_v} show the density and velocity of ARZ traffic system. The blue line indicates the initial condition, whereas the red line represents the boundary condition of the system.
The results indicate that both the NO-based adaptive method and the adaptive backstepping control method effectively stabilize the transportation system. The traffic density and velocity converge to the equilibrium values of \(\rho^* = 120\) veh/km and \(v^* = 36\) m/s, respectively. 
\textcolor{black}{As shown in Figure 10, the boundary control input constructed using the DeepONet-based kernels achieves stabilization performance comparable to that of the exact controller, indicating that the learned kernels are sufficiently accurate for practical traffic control applications and effectively alleviate traffic congestion.}
The maximum error does not exceed 10$\%$. The estimated parameter \(\hat{c}\) is shown in Figure \ref{c_Comparison}. 

\par Table \ref{tab2} presents the computation times for both the nominal adaptive controller and the NO-based adaptive controller. As the baseline result, the nominal adaptive control method is compared with the NO-based adaptive control method. Notably, the NO-based adaptive control method not only achieves significantly faster average computation times but also maintains superior accuracy with lower mean square errors. These advantages of the NO-based adaptive control method not only enhance computational efficiency but also make it highly suitable for real-time traffic system applications. The NO method’s efficiency and accuracy represent a substantial advancement, promising more effective and scalable traffic control strategies in practical scenarios.

\begin{figure*}
\centering
\includegraphics[width=14cm]{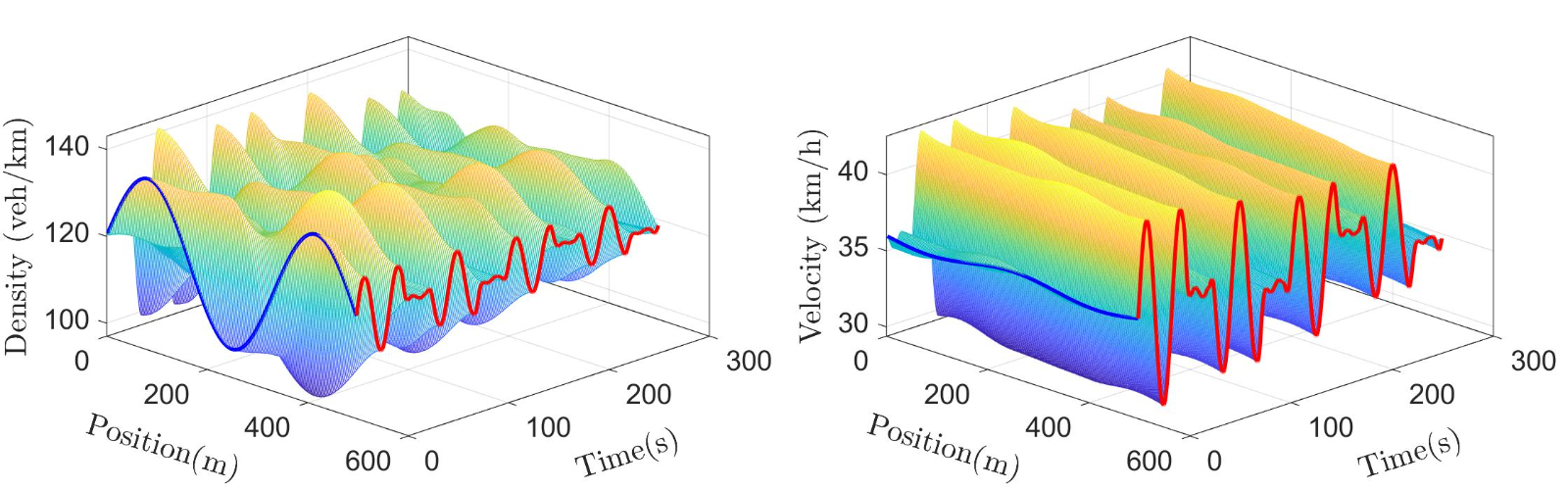}
\caption{Density and velocity evolution of open-loop ARZ traffic system }
\label{ARZ_openloop}
\end{figure*}

\begin{figure*}[ht]
\centering
\subfigure[$\rho(x,t)$]{
\includegraphics[width=0.3\textwidth]{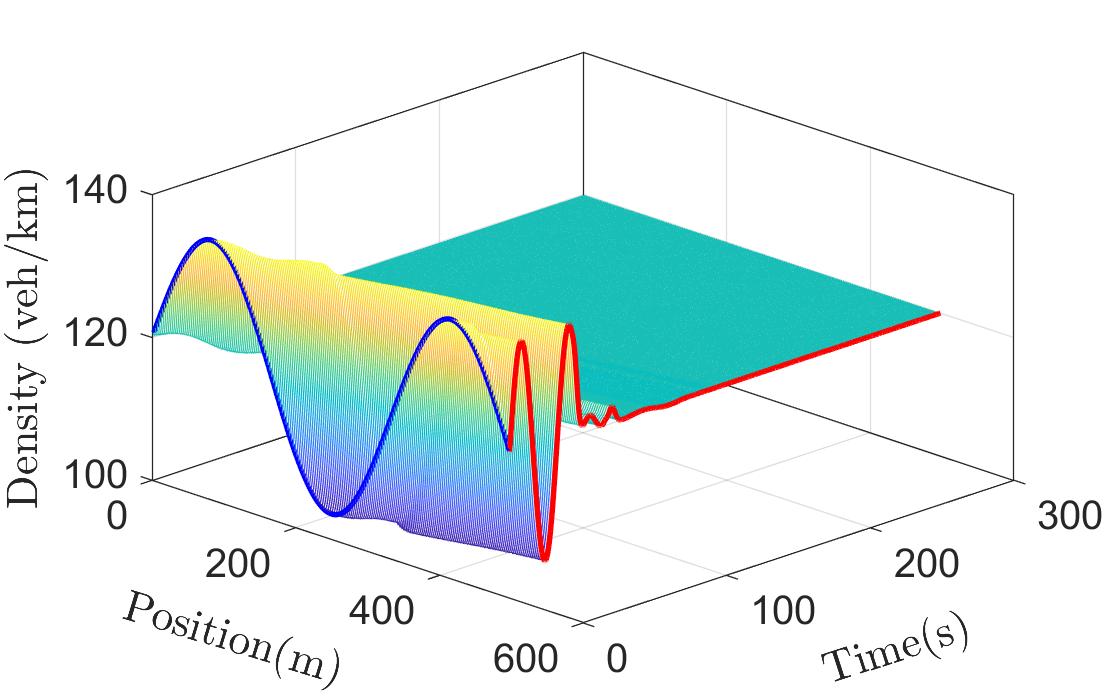} 
}
\subfigure[$\rho_{NO}(x,t)$]{
\includegraphics[width=0.3\textwidth]{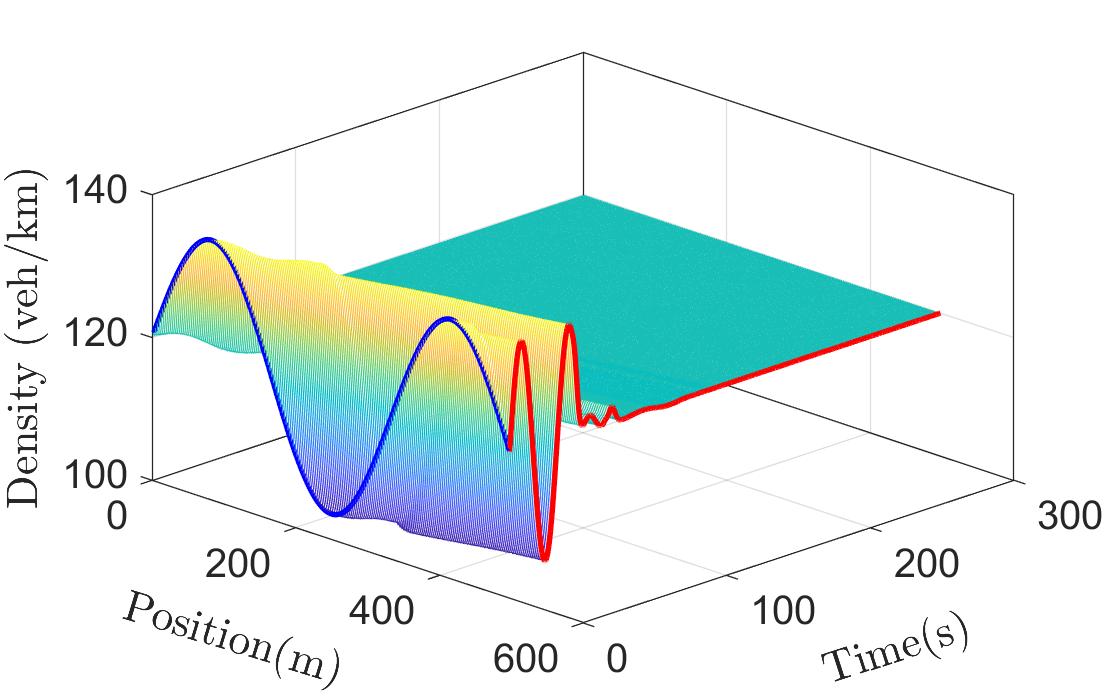}
}
\subfigure[$\|\rho(x,t)-\rho_{NO}(x,t)\|_{L^2}/\rho^*$]{
\includegraphics[width=0.3\textwidth]{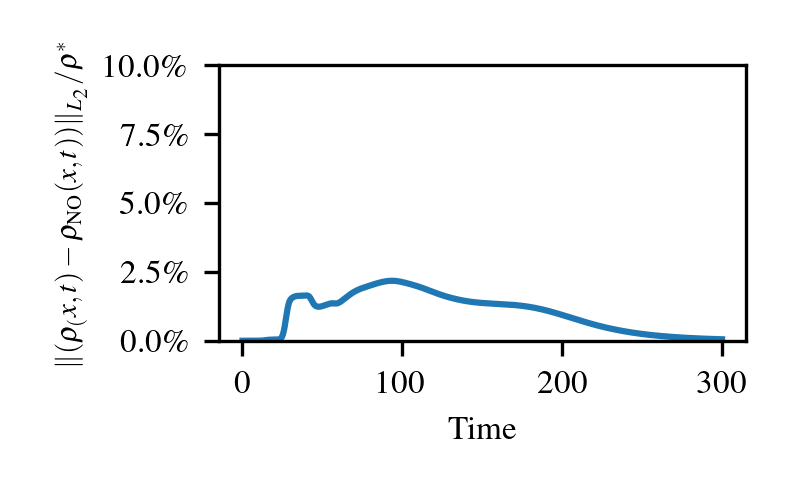}
}
\subfigure[$v(x,t)$]{
\includegraphics[width=0.3\textwidth]{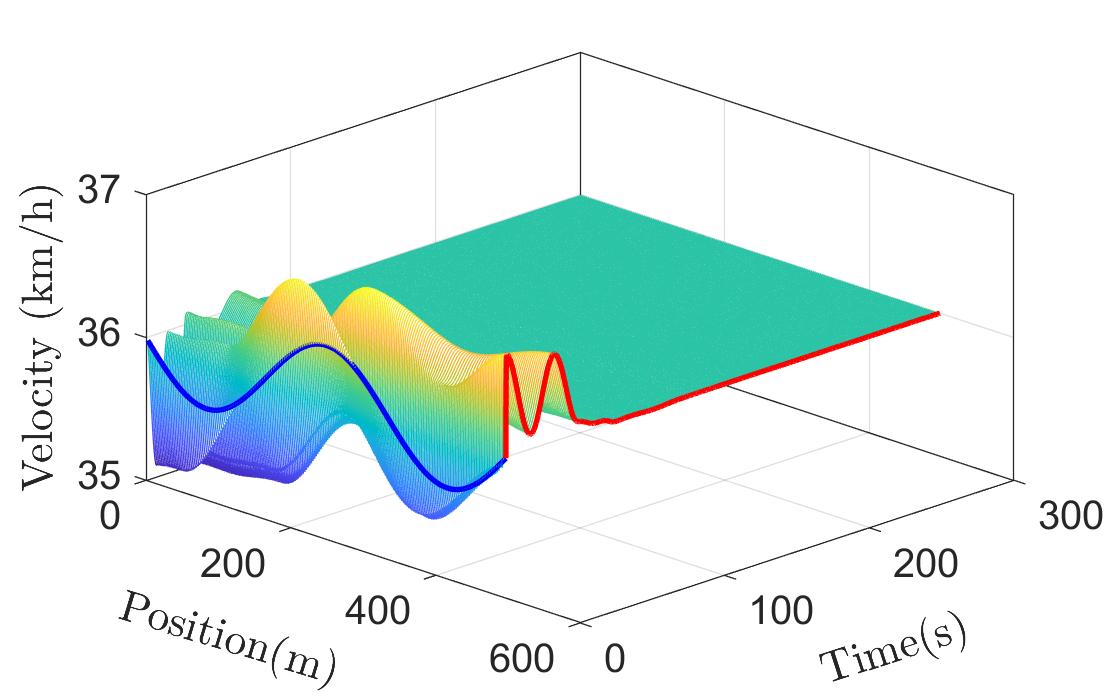} 
}
\subfigure[$v_{NO}(x,t)$]{
\includegraphics[width=0.3\textwidth]{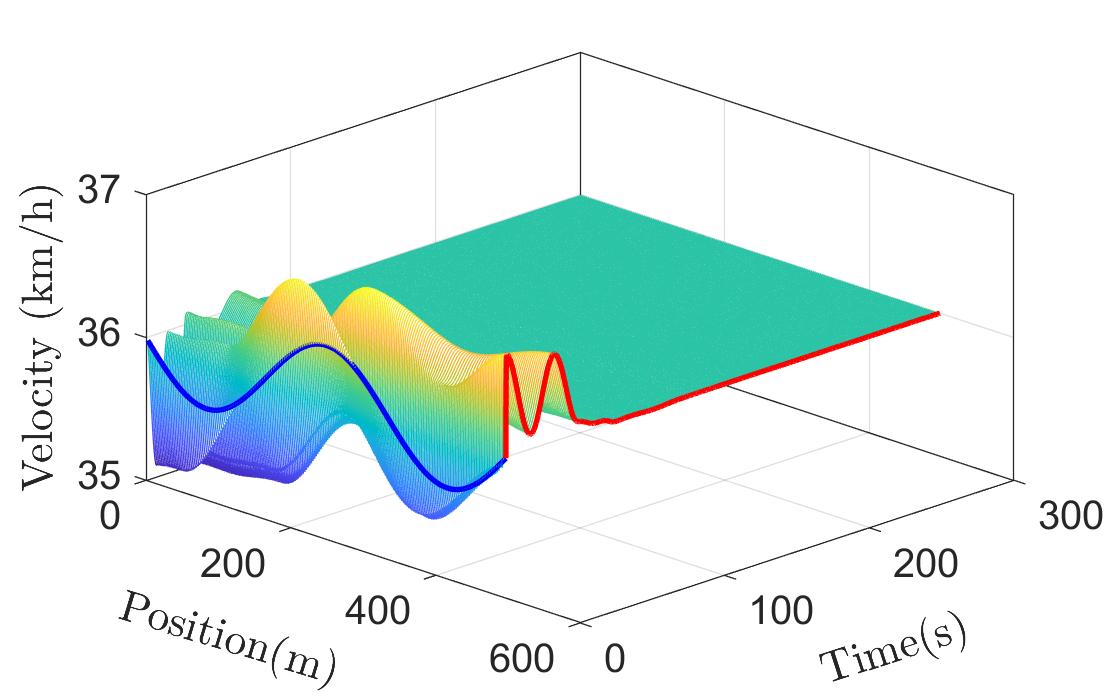}
}
\subfigure[$\|v(x,t)-v_{NO}(x,t)\|_{L^2}/v^*$]{
\includegraphics[width=0.3\textwidth]{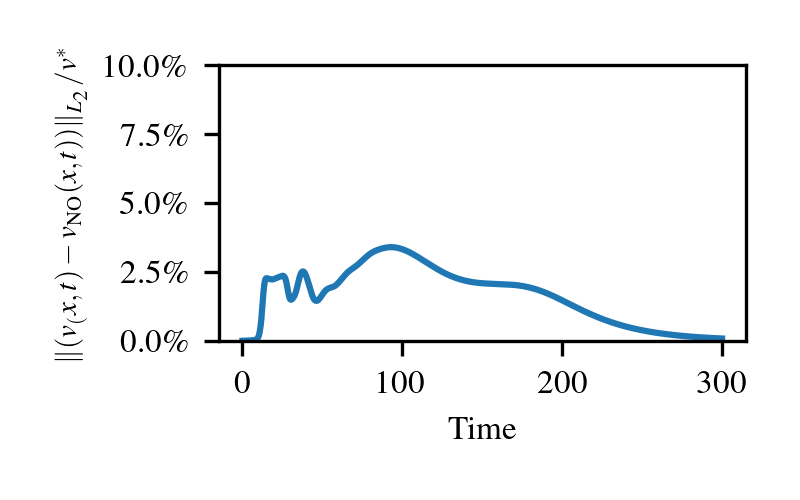}
} 
\caption{Simulation of density and velocity with feedback controller \eqref{U_ARZ} and \eqref{U_ARZ_NO}. The left columns of the first and second rows show close-loop system states $\rho(x,t)$, $v(x,t)$. The middle columns of the first and second rows show close-loop system states $\rho_{NO}(x,t)$, $v_{NO}(x,t)$. The left columns of the first and second rows show the relative $L^2$ error between  $\rho(x,t)$ and  $\rho_{NO}(x,t)$, and between ${v}(x,t)$ and  $v_{NO}(x,t)$, respectively. }
\label{rho_v}
\end{figure*}

\begin{figure*}
\centering
\includegraphics[width=13cm]{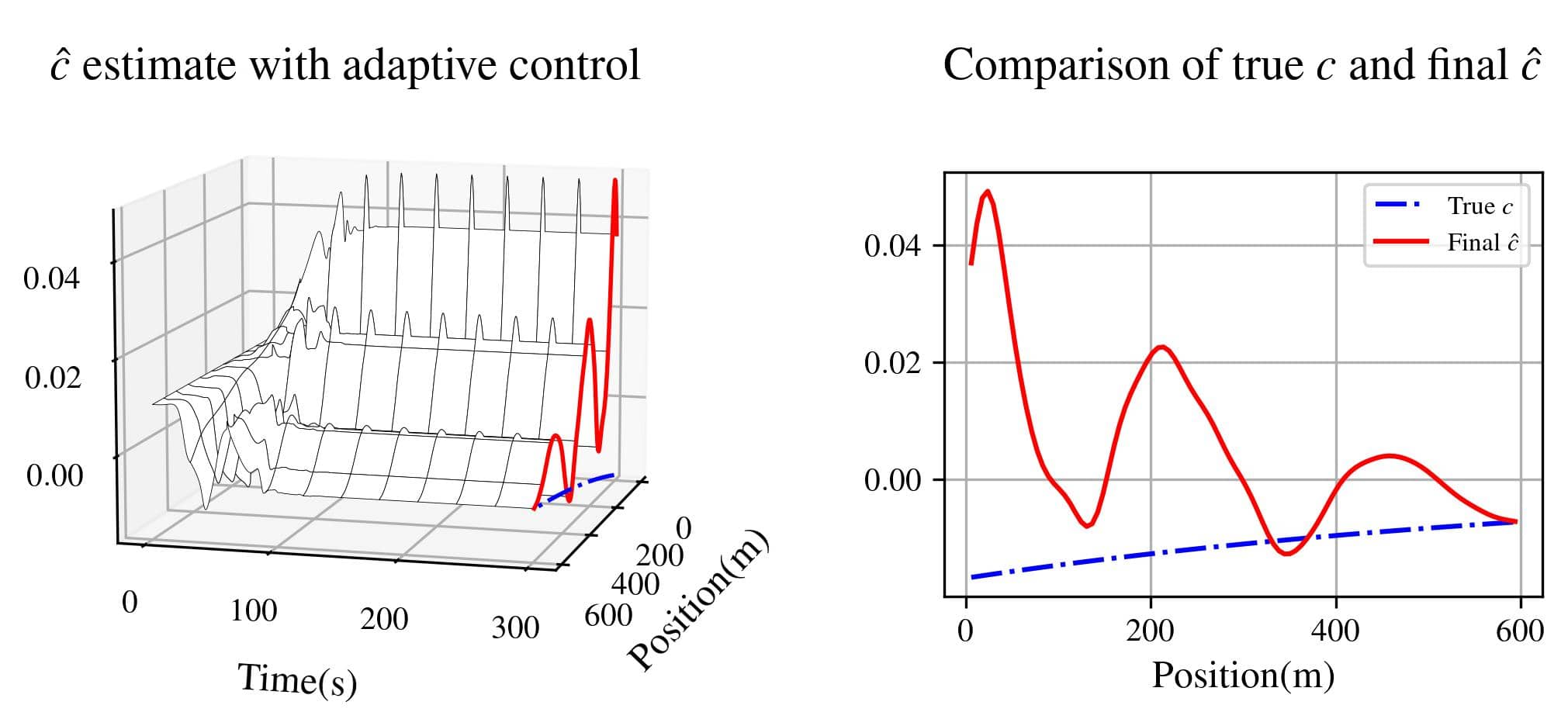}
\caption{Parameter estimation of $\hat{c}$ in ARZ traffic system
and comparision of true $c$ and $\hat{c}$}
\label{c_Comparison}
\end{figure*}

\section{Conclusion}\label{sec_Conclusion}
We develop a NO-based adaptive boundary control design for a 2$\times$2 linear first-order hyperbolic system. Compared with the previous studies \cite{lamarque2025adaptive}, \cite{bhan2025adaptive} that primarily focused on approximating a single kernel PDE, this work accelerates the computation of 2$\times$2 coupled Goursat-form PDEs. In this paper, the DeepONet is used to learn the adaptive control gains for stabilizing the traffic PDE system, and it is shown that under the DeepONet-approximated kernels the stabilization of 2$\times$2 hyperbolic PDEs can still be achieved with significant improvement for computational speeds. Experimental results show that compared to traditional numerical solvers, our method improves computational efficiency by two orders of magnitude. Additionally, compared with RL, the NO-based adaptive control strategy is independent of the system’s initial conditions, making it more robust for rapidly changing traffic scenarios. Our method significantly accelerates the process of obtaining adaptive controllers in PDE systems, greatly improving the real-time applicability of adaptive control strategies for mitigating traffic congestion. In the future, we will incorporate real traffic data into the training of the neural operator.

\appendix
\section{Proof of Lemma \ref{lemma:Identifier_bound}}
The proof of this lemma follows a similar approach to the proof of Lemma 9.1 in \cite{anfinsen2019adaptive}. Property \eqref{properties_1} follows trivially from projection in \eqref{eq:lawc12} and Lemma A.1 in \cite{anfinsen2019adaptive}.
The result can be easily obtained using the following Lyapunov function candidate:
\begin{align}
V_1(t) =& V_2(t)+\gamma_1^{-1}\|\tilde{c}_{1}\|^2+\gamma_2^{-1}\|\tilde{c}_{2}\|^2+\gamma_3^{-1}\|\tilde{c}_{3}\|^2 \nonumber\\
&+\gamma_4^{-1}\|\tilde{c}_{4}\|^2 +\dfrac{\lambda}{2\gamma_5}\tilde{r}^2(t), \label{eq:V_1}
\end{align}
where
\begin{align*}
\textcolor{black}{
V_2(t)=\int_{0}^{1}e^{-\gamma x}e_1^2(x,t)dx + \int_{0}^{1} e^{\gamma x}e_2^2(x,t)dx.} 
\end{align*}
Computing the time derivative of \eqref{eq:V_1} along \eqref{eq:e_1}-\eqref{eq:e_4} as
\begin{align*}
\dot{V}_{1}(t)= & 2 \int_{0}^{1} \Big( e^{-\gamma x} e_1(x, t) e_{1t}(x, t)+ e^{\gamma x} e_2(x, t) e_{2t}(x, t) \Big) d x \nonumber\\
&+2 \gamma_{1}^{-1}\int_{0}^{1} \tilde{c}_{1}(x,t)\tilde{c}_{1t}(x,t)dx \nonumber\\
&+2 \gamma_{2}^{-1}\int_{0}^{1} \tilde{c}_{2}(x,t)\tilde{c}_{2t}(x,t)dx  \nonumber\\
&+2 \gamma_{3}^{-1}\int_{0}^{1} \tilde{c}_{3}(x,t)\tilde{c}_{3t}(x,t)dx \nonumber\\
&+2 \gamma_{4}^{-1}\int_{0}^{1} \tilde{c}_{4}(x,t)\tilde{c}_{4t}(x,t)dx  +\lambda \gamma_{5}^{-1}  \tilde{r}(t) \dot{\tilde{r}}(t). 
\end{align*}
Substituting into the dynamics \eqref{eq:e_1}-\eqref{eq:e_4} and integrating by parts, we obtain
\begin{align*}
\dot{V}_{1}(t)= & -\lambda e^{-\gamma} e_1^{2}(1, t)+\lambda e_1^{2}(0, t)-\lambda \gamma \int_{0}^{1} e^{-\gamma x} e_1^{2}(x, t) d x  \nonumber\\
&+2 \int_{0}^{1} e^{-\gamma x} e_1(\tilde{c}_{1}u+\tilde{c}_{2}m) d x \nonumber\\
&-2 \rho \int_{0}^{1} e^{-\gamma x} e_1^{2}(x, t)\|\varpi(t)\|^{2} d x \nonumber\\
&-\mu e_2^{2}(0, t)-\mu \gamma \int_{0}^{1} e^{\gamma x} e_2^{2}(x, t) d x \nonumber\\
&+2 \int_{0}^{1} e^{\gamma x} e_2(\tilde{c}_{3}u+\tilde{c}_{4}m) d x \nonumber\\
&-2 \rho \int_{0}^{1} e^{\gamma x} e_2^{2}(x, t)\|\varpi(t)\|^{2} d x\nonumber\\
&+2 \gamma_{1}^{-1}\int_{0}^{1} \tilde{c}_{1}(x,t)\tilde{c}_{1t}(x,t)dx \nonumber\\
&+2 \gamma_{2}^{-1}\int_{0}^{1} \tilde{c}_{2}(x,t)\tilde{c}_{2t}(x,t)dx  \nonumber\\
&+2 \gamma_{3}^{-1}\int_{0}^{1} \tilde{c}_{3}(x,t)\tilde{c}_{3t}(x,t)dx \nonumber\\
&+2 \gamma_{4}^{-1}\int_{0}^{1} \tilde{c}_{4}(x,t)\tilde{c}_{4t}(x,t)dx  +\lambda \gamma_{5}^{-1} \tilde{r}(t) \dot{\tilde{r}}(t) .
\end{align*}     	
Inserting the adaptive laws \eqref{eq:lawc12}. By using the property \eqref{eq:projection_proper}, we have
\begin{align}
    \tilde{c}_1(x,t)\tilde{c}_{1t}(x,t)&=-\tilde{c}_1(x,t)\hat{c}_{1t}(x,t)\nonumber\\
    &=-\tilde{c}_{1}(x,t) \operatorname{Proj}_{\bar{c}_{1}} (\gamma_1 e^{-\gamma x}e_1 u,\hat{c}_1(x,t)) \nonumber\\
    &\leq -\tilde{c}_{1}(x,t)\gamma_1 e^{-\gamma x}e_1 u.
\end{align} Similarly for $\tilde{c}_2,\tilde{c}_3, \tilde{c}_4$, and $\tilde{r}$.
Then we have 
\begin{equation}
\begin{split}
\dot{V}_{1}(t) \leq & -\lambda e^{-\gamma} e_1^{2}(1, t)+\lambda e_1^{2}(0, t)-\lambda \gamma \int_{0}^{1} e^{-\gamma x} e_1^{2}(x, t) d x \\
& -2 \rho \int_{0}^{1} e^{-\gamma x} e_1^{2}(x, t)\|\varpi(t)\|^{2} d x\\
&-\mu e_2^{2}(0, t)-\mu \gamma \int_{0}^{1} e^{\gamma x} e_2^{2}(x, t) d x \\
& -2 \rho \int_{0}^{1} e^{\gamma x} e_2^{2}(x, t)\|\varpi(t)\|^{2} d x-\lambda \tilde{r}(t) e_1(0, t) m(0, t).\label{eq:A6}
\end{split}     	
\end{equation}
From the boundary condition \eqref{eq:e_3}, we have
\begin{equation}\label{eq:e(0,t)}
e_1(0, t)-\tilde{r}(t) m(0, t)=-e_1(0, t) m^{2}(0, t).
\end{equation}
\textcolor{black}{For the second and last terms in \eqref{eq:A6}, substituting \eqref{eq:e(0,t)}, we have}
\begin{align}
&\textcolor{black}{\lambda e_1^2(0,t) -\lambda \tilde{r}(t) e_1(0,t)m(0,t)} \nonumber \\
&\textcolor{black}{= \lambda e_1(0,t)\left(e_1(0,t) - \tilde{r}(t)m(0,t)\right)} \nonumber \\
&\textcolor{black}{= -\lambda e_1^2(0,t) m^2(0,t) .}\label{eq:A8}
\end{align}
By substituting \eqref{eq:A8}, we obtain
\begin{align}\label{eq:dotV1}
\dot{V}_{1}(t) \leq & -\lambda e^{-\gamma} e_1^{2}(1, t)-\lambda e_1^{2}(0, t) m^{2}(0, t)-\lambda \gamma e^{-\gamma}\|e_1(t)\|^{2} \nonumber\\
& -2 \rho e^{-\gamma}\|e_1(t)\|^{2}\|\varpi(t)\|^{2}-\mu e_2^{2}(0, t) \nonumber\\
& -\mu \gamma\|e_2(t)\|^{2}-2 \rho e^{\gamma}\|e_2(t)\|^{2}\|\varpi(t)\|^{2}.
\end{align}     	
From \eqref{eq:dotV1}, we obtain that $V_1$ is bounded. By the definitions of $V_1$ and $V_2$, it follows that $\|e_1\|,\|e_2\| \in L^{\infty}$. When \eqref{eq:dotV1} is integrated over time from zero to infinity, we conclude the results that $\|e_1\|,\|e_2\| \in L^{2}$, \eqref{properties_3}, and $e_1(1,\cdot),e_2(0,\cdot), |e_1(0,\cdot)m(0,\cdot)| \in  L^{2}$. From above results and the adaptive laws \eqref{eq:lawc12}, we derive that \eqref{properties_4}. we choose the Lyapunov function candidate
$
V_{3}(t)=\frac{1}{2} \gamma_{5}^{-1} \tilde{r}^{2}(t),$
and use the property \eqref{eq:projection_proper}, we find
\begin{equation}\label{eq:dotV3}
\dot{V}_{3}(t) \leq-\tilde{r}(t) e_1(0, t) m(0, t) \leq-\frac{\tilde{r}^{2}(t) m^{2}(0, t)}{1+m^{2}(0, t)} .
\end{equation}
This implies that $V_3$ is upper-bounded, and hence we have $V_3 \in L^{\infty}$. By integrating \eqref{eq:dotV3} from zero to infinity, we obtain  \eqref{properties_5}. Using \eqref{eq:e(0,t)} and \eqref{eq:e_3}, we derive that
\begin{equation}
\begin{split}
e_1^{2}(0, t) & =e_1(0, t)\left(\tilde{r}(t) m(0, t)-e_1(0, t) m^{2}(0, t)\right) \\
& =\frac{\tilde{r}^{2}(t) m^{2}(0, t)}{1+m^{2}(0, t)}-e_1^{2}(0, t) m^{2}(0, t),
\end{split}     	
\end{equation}
and from $|e_1(0,\cdot)m(0,\cdot)| \in  L^{2}$ and \eqref{properties_5}, we have $e_1(0,\cdot) \in  L^{2}$.

\bibliographystyle{elsarticle-num}

\bibliography{refDataBase} 

\begin{thebibliography}{10}
\expandafter\ifx\csname url\endcsname\relax
  \def\url#1{\texttt{#1}}\fi
\expandafter\ifx\csname urlprefix\endcsname\relax\def\urlprefix{URL }\fi
\expandafter\ifx\csname href\endcsname\relax
  \def\href#1#2{#2} \def\path#1{#1}\fi

\bibitem{belletti2015prediction}
F.~Belletti, M.~Huo, X.~Litrico, A.~M. Bayen, Prediction of traffic convective
  instability with spectral analysis of the aw--rascle--zhang model, Physics
  Letters A 379~(38) (2015) 2319--2330.

\bibitem{de2011traffic}
A.~De~Palma, R.~Lindsey, Traffic congestion pricing methodologies and
  technologies, Transportation Research Part C: Emerging Technologies 19~(6)
  (2011) 1377--1399.

\bibitem{lighthill1955kinematic}
M.~J. Lighthill, G.~B. Whitham, On kinematic waves ii. a theory of traffic flow
  on long crowded roads, Proceedings of the royal society of london. series a.
  mathematical and physical sciences 229~(1178) (1955) 317--345.

\bibitem{richards1956shock}
P.~I. Richards, Shock waves on the highway, Operations research 4~(1) (1956)
  42--51.

\bibitem{aw2000resurrection}
A.~Aw, M.~Rascle, Resurrection of" second order" models of traffic flow, SIAM
  journal on applied mathematics 60~(3) (2000) 916--938.

\bibitem{zhang2002non}
H.~M. Zhang, A non-equilibrium traffic model devoid of gas-like behavior,
  Transportation Research Part B: Methodological 36~(3) (2002) 275--290.

\bibitem{bekiaris2019feedback}
N.~Bekiaris-Liberis, A.~Delis, Feedback control of freeway traffic flow via
  time-gap manipulation of acc-equipped vehicles: A pde-based approach,
  IFAC-PapersOnLine 52~(6) (2019) 1--6.

\bibitem{zhang2019pi}
L.~Zhang, C.~Prieur, J.~Qiao, Pi boundary control of linear hyperbolic balance
  laws with stabilization of arz traffic flow models, Systems \& Control
  Letters 123 (2019) 85--91.

\bibitem{karafyllis2018feedback}
I.~Karafyllis, N.~Bekiaris-Liberis, M.~Papageorgiou, Feedback control of
  nonlinear hyperbolic pde systems inspired by traffic flow models, IEEE
  Transactions on Automatic Control 64~(9) (2018) 3647--3662.

\bibitem{yu2019traffic}
H.~Yu, M.~Krstic, Traffic congestion control for aw--rascle--zhang model,
  Automatica 100 (2019) 38--51.

\bibitem{burkhardt2021stop}
M.~Burkhardt, H.~Yu, M.~Krstic, Stop-and-go suppression in two-class congested
  traffic, Automatica 125 (2021) 109381.

\bibitem{yu2018adaptive}
H.~Yu, M.~Krstic, Adaptive output feedback for aw-rascle-zhang traffic model in
  congested regime, in: 2018 Annual American Control Conference (ACC), IEEE,
  2018, pp. 3281--3286.

\bibitem{yu2022traffic}
H.~Yu, M.~Krstic, Traffic congestion control by PDE backstepping, Springer,
  2022.

\bibitem{yu2018varying}
H.~Yu, M.~Krstic, Varying speed limit control of aw-rascle-zhang traffic model,
  in: 2018 21st international conference on intelligent transportation systems
  (ITSC), IEEE, 2018, pp. 1846--1851.

\bibitem{yu2022simultaneous}
H.~Yu, J.~Auriol, M.~Krstic, Simultaneous downstream and upstream
  output-feedback stabilization of cascaded freeway traffic, Automatica 136
  (2022) 110044.

\bibitem{zhang2023mean}
Y.~Zhang, H.~Yu, J.~Auriol, M.~Pereira, Mean-square exponential stabilization
  of mixed-autonomy traffic pde system, arXiv preprint arXiv:2310.15547 (2023).

\bibitem{logemann1997adaptive}
H.~Logemann, S.~Townley, Adaptive stabilization without identification for
  distributed parameter systems: An overview, IMA J. Math. Control and
  Information 14 (1997) 175--206.

\bibitem{bohm1998model}
M.~B{\"o}hm, M.~Demetriou, S.~Reich, I.~Rosen, Model reference adaptive control
  of distributed parameter systems, SIAM Journal on Control and Optimization
  36~(1) (1998) 33--81.

\bibitem{hong1994direct}
K.~S. Hong, J.~Bentsman, Direct adaptive control of parabolic systems:
  Algorithm synthesis and convergence and stability analysis, IEEE Transactions
  on Automatic control 39~(10) (1994) 2018--2033.

\bibitem{krstic2008adaptive}
M.~Krstic, A.~Smyshlyaev, Adaptive boundary control for unstable parabolic
  pdes—part i: Lyapunov design, IEEE Transactions on Automatic Control 53~(7)
  (2008) 1575--1591.

\bibitem{smyshlyaev2007adaptive}
A.~Smyshlyaev, M.~Krstic, Adaptive boundary control for unstable parabolic
  pdes—part ii: Estimation-based designs, Automatica 43~(9) (2007)
  1543--1556.

\bibitem{di2014adaptive}
F.~Di~Meglio, D.~Bresch-Pietri, U.~J.~F. Aarsnes, An adaptive observer for
  hyperbolic systems with application to underbalanced drilling, IFAC
  Proceedings Volumes 47~(3) (2014) 11391--11397.

\bibitem{belhadjoudja2023adaptive}
M.~C. Belhadjoudja, M.~Maghenem, E.~Witrant, C.~Prieur, Adaptive stabilization
  of the kuramoto-sivashinsky equation subject to intermittent sensing, in:
  2023 American Control Conference (ACC), IEEE, 2023, pp. 1608--1613.

\bibitem{kawan2022lyapunov}
C.~Kawan, A.~Mironchenko, M.~Zamani, A lyapunov-based iss small-gain theorem
  for infinite networks of nonlinear systems, IEEE Transactions on Automatic
  Control 68~(3) (2022) 1447--1462.

\bibitem{anfinsen2019adaptive}
H.~Anfinsen, O.~M. Aamo, Adaptive control of hyperbolic PDEs, Springer, 2019.

\bibitem{krstic2008backstepping}
M.~Krstic, A.~Smyshlyaev, Backstepping boundary control for first-order
  hyperbolic pdes and application to systems with actuator and sensor delays,
  Systems \& Control Letters 57~(9) (2008) 750--758.

\bibitem{smyshlyaev2010boundary}
A.~Smyshlyaev, E.~Cerpa, M.~Krstic, Boundary stabilization of a 1-d wave
  equation with in-domain antidamping, SIAM journal on control and optimization
  48~(6) (2010) 4014--4031.

\bibitem{anfinsen2018adaptive}
H.~Anfinsen, O.~M. Aamo, Adaptive control of linear2$\times$ 2 hyperbolic
  systems, Automatica 87 (2018) 69--82.

\bibitem{hu2015control}
L.~Hu, F.~Di~Meglio, R.~Vazquez, M.~Krstic, Control of homodirectional and
  general heterodirectional linear coupled hyperbolic pdes, IEEE Transactions
  on Automatic Control 61~(11) (2015) 3301--3314.

\bibitem{wang2020event}
J.~Wang, M.~Krstic, Event-triggered backstepping control of 2$\times$ 2
  hyperbolic pde-ode systems, IFAC-PapersOnLine 53~(2) (2020) 7551--7556.

\bibitem{auriol2020output}
J.~Auriol, Output feedback stabilization of an underactuated cascade network of
  interconnected linear pde systems using a backstepping approach, Automatica
  117 (2020) 108964.

\bibitem{mowlavi2023optimal}
S.~Mowlavi, S.~Nabi, Optimal control of pdes using physics-informed neural
  networks, Journal of Computational Physics 473 (2023) 111731.

\bibitem{zhao2023observer}
C.~Zhao, H.~Yu, Observer-informed deep learning for traffic state estimation
  with boundary sensing, IEEE Transactions on Intelligent Transportation
  Systems (2023).

\bibitem{wu2017emergent}
C.~Wu, A.~Kreidieh, E.~Vinitsky, A.~M. Bayen, Emergent behaviors in
  mixed-autonomy traffic, in: Conference on Robot Learning, PMLR, 2017, pp.
  398--407.

\bibitem{qu2020jointly}
X.~Qu, Y.~Yu, M.~Zhou, C.-T. Lin, X.~Wang, Jointly dampening traffic
  oscillations and improving energy consumption with electric, connected and
  automated vehicles: A reinforcement learning based approach, Applied Energy
  257 (2020) 114030.

\bibitem{yu2021reinforcement}
H.~Yu, S.~Park, A.~Bayen, S.~Moura, M.~Krstic, Reinforcement learning versus
  pde backstepping and pi control for congested freeway traffic, IEEE
  Transactions on Control Systems Technology 30~(4) (2021) 1595--1611.

\bibitem{lu2021learning}
L.~Lu, P.~Jin, G.~Pang, Z.~Zhang, G.~E. Karniadakis, Learning nonlinear
  operators via deeponet based on the universal approximation theorem of
  operators, Nature machine intelligence 3~(3) (2021) 218--229.

\bibitem{shi2022machine}
Y.~Shi, Z.~Li, H.~Yu, D.~Steeves, A.~Anandkumar, M.~Krstic, Machine learning
  accelerated pde backstepping observers, in: 2022 IEEE 61st Conference on
  Decision and Control (CDC), IEEE, 2022, pp. 5423--5428.

\bibitem{bhan2023neural}
L.~Bhan, Y.~Shi, M.~Krstic, Neural operators for bypassing gain and control
  computations in pde backstepping, IEEE Transactions on Automatic Control
  69~(8) (2023) 5310--5325.

\bibitem{krstic2024neural}
M.~Krstic, L.~Bhan, Y.~Shi, Neural operators of backstepping controller and
  observer gain functions for reaction--diffusion pdes, Automatica 164 (2024)
  111649.

\bibitem{qi2024neural}
J.~Qi, J.~Zhang, M.~Krstic, Neural operators for pde backstepping control of
  first-order hyperbolic pide with recycle and delay, Systems \& Control
  Letters 185 (2024) 105714.

\bibitem{wang2025deep}
S.~Wang, M.~Diagne, M.~Krsti{\'c}, Deep learning of delay-compensated
  backstepping for reaction-diffusion pdes, IEEE Transactions on Automatic
  Control (2025).

\bibitem{wang2025backstepping}
S.~Wang, M.~Diagne, M.~Krstic, Backstepping neural operators for 2$\times$ 2
  hyperbolic pdes, Automatica 178 (2025) 112351.

\bibitem{zhang2024neural}
Y.~Zhang, R.~Zhong, H.~Yu, Neural operators for boundary stabilization of
  stop-and-go traffic, in: 6th Annual Learning for Dynamics \& Control
  Conference, PMLR, 2024, pp. 554--565.

\bibitem{lv2024neural}
K.~Lv, J.~Wang, Y.~Cao, Neural operator approximations for boundary
  stabilization of cascaded parabolic pdes, International Journal of Adaptive
  Control and Signal Processing (2024).

\bibitem{lamarque2024gain}
M.~Lamarque, L.~Bhan, R.~Vazquez, M.~Krstic, Gain scheduling with a neural
  operator for a transport pde with nonlinear recirculation, arXiv preprint
  arXiv:2401.02511 (2024).

\bibitem{vazquez2024gain}
R.~Vazquez, M.~Krstic, Gain-only neural operators for pde backstepping, arXiv
  preprint arXiv:2403.19344 (2024).

\bibitem{vazquez2023power}
R.~Vazquez, G.~Chen, J.~Qiao, M.~Krstic, The power series method to compute
  backstepping kernel gains: theory and practice, in: 2023 62nd IEEE Conference
  on Decision and Control (CDC), IEEE, 2023, pp. 8162--8169.

\bibitem{lin2024towards}
X.~Lin, R.~Vazquez, M.~Krstic, Towards a matlab toolbox to compute backstepping
  kernels using the power series method, arXiv preprint arXiv:2403.16070
  (2024).

\bibitem{lamarque2025adaptive}
M.~Lamarque, L.~Bhan, Y.~Shi, M.~Krstic, Adaptive neural-operator backstepping
  control of a benchmark hyperbolic pde, Automatica 177 (2025) 112329.

\bibitem{bhan2025adaptive}
L.~Bhan, Y.~Shi, M.~Krstic, Adaptive control of reaction--diffusion pdes via
  neural operator-approximated gain kernels, Systems \& Control Letters 195
  (2025) 105968.

\bibitem{krstic2009delay}
M.~Krstic, Delay compensation for nonlinear, adaptive, and pde systems (2009).

\bibitem{Approximation}
B.~Deng, Y.~Shin, L.~Lu, Z.~Zhang, G.~E. Karniadakis, Approximation rates of
  deeponets for learning operators arising from advection--diffusion equations,
  Neural Networks 153 (2022) 411--426.

\bibitem{coron2013local}
J.-M. Coron, R.~Vazquez, M.~Krstic, G.~Bastin, Local exponential h\^{}2
  stabilization of a 2 $\times$ 2 quasilinear hyperbolic system using
  backstepping, SIAM Journal on Control and Optimization 51~(3) (2013)
  2005--2035.

\bibitem{anfinsen2017model}
H.~Anfinsen, O.~M. Aamo, Model reference adaptive control of n+ 1 coupled
  linear hyperbolic pdes, Systems \& Control Letters 109 (2017) 1--11.

\bibitem{vazquez2011backstepping}
R.~Vazquez, M.~Krstic, J.-M. Coron, Backstepping boundary stabilization and
  state estimation of a 2$\times$ 2 linear hyperbolic system, in: 2011 50th
  IEEE conference on decision and control and european control conference,
  IEEE, 2011, pp. 4937--4942.

\bibitem{krstic1995nonlinear}
M.~Krstic, P.~V. Kokotovic, I.~Kanellakopoulos, Nonlinear and adaptive control
  design, John Wiley \& Sons, Inc., 1995.

\end{thebibliography}

\end{document}